\DeclareMathAlphabet{\mathpzc}{OT1}{pzc}{m}{it}
 \newcolumntype{H}{>{\setbox0=\hbox\bgroup}c<{\egroup}@{}}
\newcounter{todocounter}
\newcommand{\todonum}[1]{\stepcounter{todocounter}\todo{\thetodocounter: #1}}
\providecommand\@dotsep{5}
\renewcommand{\listoftodos}[1][\@todonotes@todolistname]{%
	\@starttoc{tdo}{#1}}
\crefname{table}{table}{tables}
\crefname{listing}{Program-code}{Program-codes}  
\Crefname{listing}{Program-code}{Program-codes}
\crefname{subsection}{subsection}{subsections}
\theoremstyle{plain}
\newtheorem{Thm}{Theorem}[section]
\newtheorem{Prop}[Thm]{Proposition}
\newtheorem{Lem}[Thm]{Lemma}
\theoremstyle{definition}
\newtheorem{Remark}[Thm]{Remark}
\numberwithin{equation}{section}
\newcommand{\Card}[1]{\left\vert #1\right\vert} 
\newcommand{\coset}[1]{\left[ #1 \right]}  
\newcommand{\gen}[1]{\left\langle #1 \right\rangle}  
\newcommand{\FNorm}[1]{\left\vert #1 \right\vert} 
\newcommand{\Image}{\operatorname{Im}}
\newcommand{\Ind}{i}
\newcommand{\Hom}{\operatorname{Hom}}
\newcommand{\C}{\mathbb C}
\newcommand{\R}{\mathbb{R}}
\newcommand{\N}{\mathbb{N}}
\newcommand{\bk}[1]{\left(#1\right)} 
\newcommand{\bm}{\begin{multline*}}
\newcommand{\tu}{\end  {multline*}}
\DeclareMathOperator{\Id}{\mathbf{1}} 
\renewcommand{\check}[1]{#1 ^{\vee}} 
\DeclareMathOperator{\Real}{Re} 
\newcommand{\piece}[1]{\left\{\begin{matrix} #1 \end{matrix}\right.} 
\newcommand{\set}[1]{\left\{ #1 \right\}} 
\newcommand{\mvert}{\mathrel{}\middle\vert\mathrel{}} 
\newcommand{\res}[1]{\Big\vert_{#1}}
\newcommand{\suml}{\sum\limits}
\newcommand{\prodl}{\, \prod\limits}
\newcommand{\rmod}{/}
\newcommand{\lmod}{\backslash}
\newcommand{\Stab}{\operatorname{Stab}}
\newcommand{\eeightchar}[8]{\renewcommand*{\arraystretch}{1} \begin{pmatrix}&& #2 &&&& \\ #1 & #3 & #4 & #5 & #6 &#7 & #8 \end{pmatrix} }
\newcommand{\dsixcharchar}[6]{\renewcommand*{\arraystretch}{1} \begin{bmatrix} &&& #6 & \\ #1 & #2 & #3 & #4 & #5 \end{bmatrix} }
\newcommand{\bfX}{\mathbf{X}}
\newcommand{\sepline}{\mbox{}\linebreak\noindent\rule{\textwidth}{2pt}\linebreak}
\newcommand{\redcolor}[1]{\mbox{}\\ \vspace{0.5cm}{\color{red} #1}\vspace{0.5cm}}
\newcommand{\bluecolor}[1]{\mbox{}\\ \vspace{0.5cm}{\color{blue} #1}\vspace{0.5cm}}
\newcommand*{\rom}[1]{\expandafter\@slowromancap\romannumeral #1@}
\renewcommand*{\arraystretch}{1.1}
\def\imod#1{\allowbreak\mkern10mu({\operator@font mod}\,\,#1)}
\renewcommand\section{\@startsection{section}{1}{\z@}%
	{-3.5ex \@plus -1ex \@minus-.2ex}%
	{2.3ex \@plus.2ex}%
	{\center\normalfont\large\bfseries}}
\renewcommand\subsection{\@startsection{subsection}{2}{\z@}%
	{-3.5ex \@plus -1ex \@minus-.2ex}%
	{2.3ex \@plus.2ex}%
	{\normalfont\large\bfseries}}
\renewcommand\subsubsection{\@startsection{subsubsection}{3}{\z@}%
	{-3.5ex \@plus -1ex \@minus-.2ex}%
	{2.3ex \@plus.2ex}%
	{\normalfont\large\bfseries}}
\newtheorem*{rep@theorem}{\rep@title} \newcommand{\newreptheorem}[2]{%
	\newenvironment{rep#1}[1]{%
		\def\rep@title{\bf #2 \ref{##1} }%
		\begin{rep@theorem} }%
		{\end{rep@theorem} } }
\protected\def\ignorethis#1\endignorethis{}
\let\endignorethis\relax
\newcounter{NoTableEntry}
\renewcommand*{\theNoTableEntry}{NTE-\the\value{NoTableEntry}}
\newcommand*{\notableentry}{%
	\kern-\tabcolsep
	\stepcounter{NoTableEntry}%
	\vadjust pre{\zsavepos{\theNoTableEntry t}}
	\vadjust{\zsavepos{\theNoTableEntry b}}
	\zsavepos{\theNoTableEntry l}
	\raisebox{%
		\dimexpr\zposy{\theNoTableEntry b}sp
		-\zposy{\theNoTableEntry l}sp\relax
	}[0pt][0pt]{%
		\setlength{\unitlength}{1pt}%
		\edef\w{%
			\strip@pt\dimexpr\zposx{\theNoTableEntry r}sp%
			-\zposx{\theNoTableEntry l}sp\relax
		}%
		\edef\h{%
			\strip@pt\dimexpr\zposy{\theNoTableEntry t}sp%
			-\zposy{\theNoTableEntry b}sp\relax
		}%
		\ifdim\w pt=0pt 
		\else
		\begin{picture}(0,0)%
		\edef\x{%
			\noexpand\put(0,0){\noexpand\line(\w,\h){\w}}%
			\noexpand\put(0,\h){\noexpand\line(\w,-\h){\w}}%
		}\x
		\end{picture}%
		\fi
	}%
	\hspace{0pt plus 1filll}%
	\zsavepos{\theNoTableEntry r}
	\kern-\tabcolsep
}
\providecommand*{\cupdot}{%
	\mathbin{%
		\mathpalette\@cupdot{}%
	}%
}
\newcommand*{\@cupdot}[2]{%
	\ooalign{%
		$\m@th#1\cup$\cr
		\sbox0{$#1\cup$}%
		\dimen@=\ht0 %
		\sbox0{$\m@th#1\cdot$}%
		\advance\dimen@ by -\ht0 %
		\dimen@=.5\dimen@
		\hidewidth\raise\dimen@\box0\hidewidth
	}%
}
\providecommand*{\bigcupdot}{%
	\mathop{%
		\vphantom{\bigcup}%
		\mathpalette\@bigcupdot{}%
	}%
}
\newcommand*{\@bigcupdot}[2]{%
	\ooalign{%
		$\m@th#1\bigcup$\cr
		\sbox0{$#1\bigcup$}%
		\dimen@=\ht0 %
		\advance\dimen@ by -\dp0 %
		\sbox0{\scalebox{2}{$\m@th#1\cdot$}}%
		\advance\dimen@ by -\ht0 %
		\dimen@=.5\dimen@
		\hidewidth\raise\dimen@\box0\hidewidth
	}%
}
\newcommand{\fun}[1]{\bar{\omega}_{#1}}
\newcommand{\jac}[3]{r^{#1}_{#2}\bk{#3}}
\newcommand{\weyl}[1]{\mathit{W}_{#1}}
\newcommand{\w}{w}
\newcommand{\s}[1]{s_{#1}}
\newcommand{\para}[1]{#1}
\newcommand{\inner}[1]{\langle #1 \rangle}
\numberwithin{equation}{section}
\subjclass[2010]{22E50, 20G41, 20G05}
\NewDocumentCommand{\ceil}{s O{} m}{%
	\IfBooleanTF{#1} 
	{\left\lceil#3\right\rceil} 
	{#2\lceil#3#2\rceil} 
}
\newcommand{\divides}{\Big \vert}
\newcommand{\RR}{\makecell{$red.^*$}}
\newcommand{\RI}{\notableentry}
\newcommand{\NRR}{\makecell{$red.$}}
\newcommand{\NRI}{\makecell{$irr.$}}
\title[Degenerate Principal Series of $E_8$]{The Degenerate Principal Series Representations of Exceptional Groups of Type $E_8$ over $p$-adic Fields} 
\author[Hezi Halawi and Avner Segal]{Hezi Halawi${^{1}}$ and Avner Segal${^{2}}$}
\address{${^1}$ School of Mathematics, Ben Gurion University of the Negev, POB 653, Be'er Sheva 84105, Israel}
\address{${^2}$ Mathematics Department, Shamoon College of Engineering,
	56 Bialik St., Beer-Sheva 84100, Israel}
\email{halawi@post.bgu.ac.il, avnerse@sce.ac.il}
\numberwithin{equation}{section}
\subjclass[2010]{22E50, 20G41, 20G05}
\begin{document}

\begin{abstract}
	In this paper, we study the reducibility of degenerate principal series of the simple, simply-connected exceptional group of type $E_8$.
	Furthermore, we calculate the maximal semi-simple subrepresentation and quotient of these representations for almost all cases.
\end{abstract}

\maketitle

\tableofcontents


\section{Introduction}
This paper is the final part in our project of studying the degenerate principal series of exceptional groups of type $E_n$.
This paper is about $E_8$, which, as often noted by David Kazhdan, is the smallest, split, simple, simply-connected, adjoint and simply laced group.
In fact, this essentially completes the study of degenerate principal series of simple $p$-adic groups up to isogeny.

More precisely, let $F$ be a non-Archimedean local field and let $G$ denote the split simple group of type $E_8$.
For a maximal parabolic subgroup $P$ of $G$ with a Levi subgroup $M$ and a $1$-dimensional representation $\Omega$ of $M$, we consider the following two questions:
\begin{itemize}
	\item Is the normalized parabolic induction $Ind_P^G\bk{\Omega}$ irreducible?
	\item If $Ind_P^G\bk{\Omega}$ is reducible, what is the length of its maximal semi-simple subrepresentation and quotient?
\end{itemize}
We completely answer the first one in \Cref{Thm:Main_theorem}, and almost completely answer the second.
In fact, there are only two pairs $\bk{P,\Omega}$ (out of hundreds of cases) in which we were only able to show that the maximal semi-simple subrepresentation is of length at most $2$.
In both of these cases, we show that the irreducible spherical subquotient is a subrepresentation and describe the other possible irreducible subrepresentation in terms of its Langlands data.
Further, we describe a decisive test to determine the length of the maximal semi-simple quotient for each of these cases, which would hopefully could be realized when stronger computing machines would be more commonly available.
These two cases are detailed in \Cref{Subsec:Unresolved_Cases}.

In order to answer the above questions, we use the algorithm described in \cite[Section 3]{SDPS_E6} and \cite[Section 3]{SDPS_E7}.
This provides an answer to both questions for almost all pairs $\bk{P, \Omega}$
For the remaining cases, not determined by the algorithm, further study is performed in \Cref{Sec:Exceptional_Cases}.
This project uses a script implemented in the Sagemath environment \cite{sagemath}.


The study of local degenerate principles series is useful for various reasons.
One of which is the study of the degenerate residual spectrum of the adelic group.
The study of the degenerate residual spectrum of the simple group of type $F_4$ is the topic of the first author's PhD dissertation and relies on the study of the local degenerate principal series of $F_4$, performed in \cite{MR2778237}.
The study of degenerate residual representations of $Spin_8$, with $P$ being the Heisenberg parabolic subgroup, was performed by the second author in \cite{MR3803152,MR4024536} and used to study exceptional $\theta$-lifts in \cite{RallisSchiffmannPaper}.

The study of the degenerate residual spectrum of the adelic groups of type $E_n$ is a work in progress as a joint project, \cite{DegResSpec_En}, of both authors.

This paper is structured as follows:
\begin{itemize}
	\item In \Cref{Sec:Preliminaries}, we recall basic notations and properties from representation theory of $p$-adic groups, we recall the algorithm described in \cite{SDPS_E6} and \cite{SDPS_E7} and we recall basic data on the exceptional group of type $E_8$.
	
	\item In \Cref{Sec:Main_Theorem} we state our main theorem, \Cref{Thm:Main_theorem}, and list all cases which can be resolved using our algorithm.
	This algorithm constitutes of reducibility and irreducibility test as well as some test to check if the representation admits a unique irreducible subrepresentation.
	
	\item In \Cref{Sec:Exceptional_Cases} we go over the exceptional cases which could not have been fully resolved by our algorithm.
	We resolve most of these cases completely and make some progress towards the resolution of the remaining two cases.
\end{itemize}

Finally, we wish to address a question which was broached to us following \cite{SDPS_E6} and \cite{SDPS_E7}.
The reducibility of a non-unitary degenerate principal series representation can be determined by the local Shahidi coefficients which would seem to make our algorithm obsolete.
However, the algorithm, presented in \Cref{Subsec:The_Algorithm}, is useful for various other reasons such as:
\begin{itemize}
	\item It allows us to determine the reducibility for unitary cases too.
	\item While local Shahidi coefficients can inform us regarding the reducibility of non-unitary degenerate principal series, the output of the reducibility test is also useful when studying global phenomenons such as the Siegel-Weil identity (and indeed this data is used in \cite{DegResSpec_En}).
	\item Data from the reducibility and irreducibility tests is useful for studying the structure of reducible degenerate principal series and in particular for studying its socle and cosocle.
\end{itemize}

We also wish to point out that comparing the lists of non-unitary reducible degenerate principal series determined by our method with that given by Shahidi's method, was useful for debugging purposes.
Indeed, as in the $E_6$ and $E_7$ case, our algorithm was decisive for all non-unitary cases.
The only cases where the algorithm was unable to determine the reducibility of the degenerate principal series were unitary ones.


\paragraph*{\textbf{Acknowledgments}}

The second author was partially supported by grants 421/17 and 259/14 from the Israel Science Foundation as well as by the Junior Researcher Grant of Shamoon College of Engineering (SCE).

\section{Preliminaries}
\label{Sec:Preliminaries}

This section has three parts.
In the first part, we fix notations for this paper.
This part is organized as an enumerated list in order to make the look up of notations easier.

In the second part we recall the algorithm which we implemented for the study of the degenerate principal series of $E_8$.

Since the algorithm has been described in detail in \cite[Section 3]{SDPS_E6} and \cite[Section 3]{SDPS_E7}, we skip much of the details and describe only the broad strokes of it, while keeping all details which are required for \Cref{Sec:Main_Theorem} and \Cref{Sec:Exceptional_Cases}.

In the third part of this section we introduce the split group of type $E_8$.

\subsection{Groups, Characters and Representations}
\label{Subsec:Notations}
In this subsection, we fix notations and recall basic facts about the groups, characters and representations involved in this paper.
For a more detailed discussion, the reader is encouraged to consider \cite[Section 2]{SDPS_E6} and \cite[Section 2]{SDPS_E7}.

\subsubsection{Groups}
\begin{enumerate}
	\item Let $F$ be a non-Archimedean local field with norm $\FNorm{\cdot}$.
	\item Let $G$ denote the $F$-points of a split simply-connected reductive group.
	\item Let $T$ be a maximal split torus of $G$.
	\item Let $B$ be a Borel subgroup of $G$ such that $T\subset B$.
	\item Let $\Phi_G$ denote the roots of $G$ with respect to $T$ and let $\Phi_G^{+}\subset \Phi_G$ denote the positive roots of $G$ with respect to $B$.
	\item Let $\Delta_G=\set{\alpha_1,...,\alpha_n}$ be the set of simple roots of $\Phi_G$ with respect to $B$.
	\item Let $n=\Card{\Delta_G}=\dim_F\bk{T}$ denote the rank of $G$.
	\item Let $\check{\Phi}_G = \set{\check{\alpha}\mvert \alpha\in\Phi_G}$ denote the set of coroots of $G$ with respect to $T$.
	
	\item We use $\inner{\cdot,\cdot}$ to denote the usual pairing between characters and co-characters of $T$.
	
	\item Let $\omega_{\alpha_1},...,\omega_{\alpha_n}$ denote the fundamental weights of $T$ which satisfy
	\[
	\gen{\omega_{\alpha_i},\check{\alpha_j}} = \delta_{i,j} .
	\]
	\item Let $W=\gen{s_i \mvert 1\leq i\leq n}$ denote the Weyl group of $G$ with respect to $T$, generated by the simple reflections $s_i$ associated with the simple roots $\alpha_i$.
	\item For $\Theta\subset\Delta_G$, let $P_\Theta=\gen{B,s_i\mvert \alpha_i\in\Theta}=M_\Theta\cdot U_\Theta$ be the standard parabolic subgroup of $G$ associated to $\Theta$.
	We denote its Levi subgroup by $M_\Theta$ and its unipotent radical by $U_\Theta$.
	\item Let $\Phi_{M_\Theta}$, $\Phi_{M_\Theta}^{+}$ and $\Delta_{M_\Theta}$ denote the roots, positive roots and simple roots of $M_\Theta$ with respect to $T$ and $B\cap M_\Theta$ respectively.
	\item Let $W_{M_\Theta}=\gen{s_i\mvert \alpha_i\in\Theta}$ denote the Weyl group of $M_\Theta$ with respect to $T$.
	\item Let $P_i = P_{\Delta_G\setminus\set{\alpha_i}}$ and $M_i=M_{\Delta_G\setminus\set{\alpha_i}}$ denote a maximal (proper) standard parabolic subgroup of $G$ and its Levi subgroup.
	
	\item For maximal standard Levi subgroups $M_i$ and $M_j$ of $G$, we write $M_{i,j}=M_i\cap M_j$.
	
	\item We denote the rank $1$ Levi subgroups by $L_i=M_{\set{\alpha_i}}$.
\end{enumerate}

\subsubsection{Characters}

\begin{enumerate}
	\item Let $\bfX\bk{G}=\set{\Omega:G\to\C^\times}$ denote the complex manifold of continuous characters of $G$, we use additive notations for this group, that is
	\[
	\bk{\Omega_1+\Omega_2}\bk{g} = \Omega_1\bk{g}\cdot\Omega_2\bk{g} .
	\]
	We usually use the letter $\Omega$ to denote elements in $\bfX\bk{M}$, for a non-minimal Levi subgroup $M$ of $G$, while using $\lambda$ to denote an element of $\bfX\bk{T}$.
	Also, note that $W$ acts on $\bfX\bk{T}$ via its action on $T$.
	\item We denote the set of unramified elements $\Omega\in \bfX\bk{T}$ by $\bfX^{un}\bk{T}$.
	\item Let $\Id_G\in\bfX\bk{G}$ denote the trivial character of $G$.
	\item We say that $\chi\in\bfX\bk{T}$ has finite order if there exists $k\in\N$ such that $\chi^k=\Id_T$.
	The order of $\chi$, denoted by $ord\bk{\chi}$, is the minimal $k\in\N$ such that $\chi^k=\Id_T$.
	
	In particular, every element $\Omega\in\bfX\bk{F^\times}$ can be written as $\Omega=s+\chi$, where $s\in\C$ and $\chi$ is of finite order.
	Namely,
	\[
	\Omega\bk{x} = \chi\bk{x}\FNorm{x}^s  \quad \forall x\in F^\times.
	\]
	It holds that $\bfX^{un}\bk{F^\times}$ can be described by all characters of the forms $\FNorm{x}^s$ for some $s\in\C$.
	
	\item We write $\Real\bk{\Omega}$ for the character
	\[
	\Real\bk{\Omega}\bk{x} = \FNorm{x}^{\Real\bk{s}}.
	\]

	\item We say that $\lambda\in\bfX\bk{T}$ is \textbf{anti-dominant} if
	\[
	\Real\bk{\inner{\lambda,\check{\alpha_i}}}\leq 0 \quad \forall 1\leq i\leq n.
	\]
	Note that every $W_G$-orbit in $\bfX\bk{T}$ contains at least one anti-dominant element  and all anti-dominant elements in the same $W_G$-orbit have an equal real part.
	As a convention, we denote an anti-dominant element by $\lambda_{a.d.}$.	

	\item Let $\Omega_{i,s,\chi}$ denote the character of a maximal Levi subgroup $M_i$ of $G$ associated with $\bk{s+\chi}\circ\omega_{\alpha_i}$, where $s\in\C$ and $\chi\in\bfX\bk{F^\times}$ is of finite order.
	Note that if $G$ is simple, then any element in $\bfX\bk{M_i}$ can be written this way.

\end{enumerate}

\subsubsection{Representations}

\begin{enumerate}
	\item Let $Rep\bk{G}$ denote the category of admissible representations of $G$.
	\item As above, $\Id_G$ denotes the trivial representation of $G$.
	\item Let $i_M^G:Rep\bk{M}\to Rep\bk{G}$ and $r_M^G:Rep\bk{G}\to Rep\bk{M}$ denote the functors of normalized parabolic induction and Jacquet functor, adjunct by the Frobenius reciprocity:
	\begin{equation}
		\label{eq:Frobenius_reciprocity}
		\Hom_G\bk{\pi,i_M^G\sigma} \cong \Hom_M\bk{r_M^G\pi,\sigma} .
	\end{equation}
	
	\item For $\pi,\sigma\in Rep\bk{G}$, such that $\sigma$ is irreducible, let $mult\bk{\sigma,\pi}$ denote the multiplicity of $\sigma$ in the Jordan-H\"older series of $\pi$.
	
	\item Let $\mathfrak{R}\bk{G}$ denote the Grothendieck ring of $Rep\bk{G}$ and let $\coset{\pi}$ denote the image of $\pi\in Rep\bk{G}$ in $\mathfrak{R}\bk{G}$.
	Recall that $\mathfrak{R}\bk{G}$ admits a partial order such that $\pi_1\leq\pi_2$ if $mult\bk{\sigma,\pi_1}\leq mult\bk{\sigma,\pi_2}$ for every irreducible $\sigma\in Rep\bk{G}$.
	
	\item We remind the reader that, for Levi subgroups $L$ and $M$ of $G$ and $\sigma\in Rep\bk{M}$, the composition $\coset{r_L^G i_M^G\sigma}$ is given by the \emph{geometric lemma} (\cite[Lemma 2.12]{MR0579172}, \cite[Theorem 6.3.6]{Casselm1974}):
	\begin{equation}
	\label{Eq:gemoetric_lemma}
	\coset{r_L^G i_M^G\sigma} = \suml_{w\in W^{M,L}} \coset{i_{L'}^L \circ w \circ r_{M'}^M\sigma},
	\end{equation}
	where:
	\begin{itemize}
		\item $W^{M,L}=\set{w\in W\mvert w\bk{\Phi^{+}_M}\subseteq\Phi_G^{+},\ w^{-1}\bk{\Phi^{+}_L}\subseteq\Phi_G^{+}}$ is the set of shortest representatives in $W$ of the double coset space $W_L\lmod W_G\rmod W_M$.
		\item For $w\in W^{M,L}$ we write $M'=M\cap w^{-1}Lw$ and $L'=wMw^{-1}\cap L$.
	\end{itemize}

	\item For $\pi\in Rep\bk{G}$, we write $\coset{r_T^G\pi}=\suml_{i=1}^l n_i\times \coset{\lambda_i}$ for certain $\lambda_i\in\bfX\bk{T}$ such that $mult\bk{\lambda_i,\pi}=n_i>0$.
	Since $\dim_\C\bk{r_T^G\pi}$ is finite, there are only finitely many such $\lambda_i$.
	We call such $\lambda_i$ \textbf{the  exponents of $\pi$}.
	
	\item The representations $\pi=i_{M_i}^G\bk{\Omega_{M_i,s,\chi}}$ are called \textbf{degenerate principal series}.
	The exponent $\lambda_0=r_T^{M_i}\bk{\Omega_{M_i,s,\chi}}$ is called the \textbf{initial exponent} of $\pi$.
	
	\item We say that $\pi=i_{M_i}^G\bk{\Omega_{M_i,s,\chi}}$ is \textbf{regular} if $\Stab_W\bk{\lambda_0}=\set{1}$, where $\lambda_0=r_T^{M_i}\bk{\Omega_{M_i,s,\chi}}$.
	
	\item Let $w_0$ denote the longest element in $W^{M_i,T}$.
	It holds that $w_0\cdot\bk{i_{M_i}^G\bk{\Omega_{M_i,s,\chi}}}=i_{M_{j}}^G\bk{\Omega_{M_j,-s,\overline{\chi}}}$, where $\overline{\chi}$ is the complex conjugate of $\chi$ and $M_j = w_0 M_i w_0^{-1}$.
	We note that $M_j=M_i$, except when $G$ is of type $A_n$, $D_{2n+1}$ or $E_6$.
	We call $i_{M_{j}}^G\bk{\Omega_{M_j,-s,\overline{\chi}}}$ the \textbf{invert representation} of $i_{M_i}^G\bk{\Omega_{M_i,s,\chi}}$.
	We note that the invert representation has the same irreducible constituents but in an "inverted order".
	That is, $i_{M_{j}}^G\bk{\Omega_{M_j,-s,\overline{\chi}}}$ admits a Jordan-H\"older series whose irreducible quotients appear an inverted order than that of $i_{M_i}^G\bk{\Omega_{M_i,s,\chi}}$
	When $\chi=\Id$, the invert was defined in \cite[Remark 2.2.5]{MR1341660} as a variation of the Iwahori-Matsumoto involution.

\end{enumerate}

\subsection{The Algorithm}
\label{Subsec:The_Algorithm}

We now recall parts of the algorithm used by us to study the degenerate principal series $\pi=i_{M_i}^G\bk{\Omega_{M_i,s,\chi}}$.
For a more detailed account, the reader is encouraged to consider \cite[Section 3]{SDPS_E6} and \cite[Section 3]{SDPS_E7}.

We identify the data defining $\pi$ by a triple of numbers $\coset{i,s,ord\bk{\chi}}$.
In particular, the reducibility and lengths of the maximal semi-simple subrepresentation and quotient depend only on this triple (and are uniform among $\chi$s with the same order).

As explained in \cite[Remark 3.1]{SDPS_E6}, we may assume, without loss of generality, that $s\in\R$.
Furthermore, it is enough to consider only the cases where $s\leq 0$ since the invert representation of $\pi$ admits a Jordan-H\"older series with same irreducible quotients appearing but in inverted order.

The algorithm has the following parts:
\begin{enumerate}
	\item Determine all non-regular such $\pi$ - there is a finite number of such cases!
	\item Determine all reducible regular $\pi$ - there are only finitely many such cases!
	\item Apply reducibility tests to non-regular $\pi$ (these may be inconclusive).
	\item Apply an irreducibility test to non-regular $\pi$ (this too may be inconclusive). This test uses the so called \emph{branching rule calculation} introduced in \Cref{Subsubsec:Irreducibility}.
	
	\item Determine if $\pi$ admits a unique irreducible subrepresentation.
	As will be explained in \Cref{Subsubsec:UIS}, for $s<0$, $\pi$ admits a unique irreducible quotient.
\end{enumerate}

The treatment of the regular cases is standard and is outlined in \cite[Subsection 3.1]{SDPS_E6}.
In what follows, we describe tools used  mostly for dealing with the non-regular case.

%


\subsubsection{Reducibility Tests}
\label{Subsubsec:Reducibility}

In order to prove that a representation $\pi=i_{M_i}^G\bk{\Omega_{M_i,s,\chi}}$ is reducible, we provide another $\pi'\in Rep\bk{G}$ such that $\pi$ and $\pi'$ share a common irreducible constituent, while $\pi\neq \pi'$.
Usually, it is enough to consider $\pi'=i_{M_j}^G\bk{\Omega_{M_j,t,\chi^l}}$, where $l$ is a totative\footnote{That is, an integer $0<l\leq ord\bk{\chi}$ which is coprime with $ord\bk{\chi}$} of $ord\bk{\chi}$.
In other cases, one takes $\pi'=i_{M_{j_1,j_2}}^G\Omega_{s_1,s_2,\chi,k_1,k_2}$, where $\Omega_{s_1,s_2,\chi,k_1,k_2}\in\bfX\bk{M_{j_1,_2}}$ is associated with
\begin{equation}
	\label{Eq:1_dim_rep_of_almost_max_Levi}
	\bk{s_1+\chi^{k_1}}\circ\omega_{\alpha_{j_1}}+\bk{s_2+\chi^{k_2}}\circ\omega_{\alpha_{j_2}},
\end{equation}
such that at least one of $k_1$ and $k_2$ is a totative of $ord\bk{\chi}$.

In particular, one checks that:
\begin{itemize}
	\item $\pi$ and $\pi'$ share a common anti-dominant exponent and
	\item $r_T^G\pi\nleq r_T^G\pi'$.
\end{itemize}
Which guarantees that $\pi$ is reducible and shares a common irreducible subquotient with $\pi'$ (possibly $\pi'$ itself).
This is done via Tadi\'c's criterion (See \cite[Lemma 3.1]{MR1658535} and \cite[Section 3B]{SDPS_E7} for more information).

\subsubsection{Irreducibility Tests - Branching Rule Calculations}
\label{Subsubsec:Irreducibility}

We test the irreducibility of $\pi=i_{M_i}^G\bk{\Omega_{M_i,s,\chi}}$ using a method we call \emph{branching rule calculations}.
This method is also used, in some cases, to determine the number of irreducible subrepresentations of $\pi$.

Let
\[
\mathcal{S} = 
\set{f:\bfX\bk{T}\to\N \mvert \text{$f$ has a finite support}} .
\]
and note that $\mathcal{S}$ is endowed with a natural partial order.
For any $\sigma\in Rep\bk{G}$, let $f_\sigma\in\mathcal{S}$ be defined by
\[
f_\sigma\bk{\lambda} = mult\bk{\lambda,r_T^G\sigma} .
\]
In particular, for $\sigma=\pi$, $f_\pi$ can be calculated directly using \Cref{Eq:gemoetric_lemma}.

A sequence of functions $\set{f_j}_{j=0}^k$ is called a \textbf{unital $\sigma$-dominated sequence} if it satisfies the condition $f_0\leq f_1\leq ...\leq f_k\leq f_\sigma$ and $f_0=\delta_{\lambda}$ for some $\lambda\leq r_T^G\sigma$.
Here $\delta_{\lambda}$ denotes the Kronecker delta function given by
\[
\delta_\lambda\bk{\lambda'} = \piece{1,& \lambda'=\lambda \\ 0, & \lambda'\neq\lambda} .
\]

For a given $\lambda\leq r_T^G\sigma$, we describe a recipe of constructing a $\sigma$-dominated sequence $\set{f_j}_{j=0}^k$ such that $f_0=\delta_{\lambda}$.

The construction of the sequence $\set{f_j}_{j=0}^k$ is done using the following recursive process:
\begin{enumerate}
	\item Set $f_0=\delta_\lambda$.
	
	\item Assume that $f_0<...< f_l$ were determined. 
	We choose $\lambda'\in \operatorname{supp}\bk{f_l}$ and a Levi subgroup $L$ of $G$ such that $M$ admits a unique irreducible representation $\tau$ such that $\lambda'\leq r_T^L\tau$.
	
	\item For a choice of $\bk{\lambda',L,\tau}$ as above, for any $\mu\in \bfX\bk{T}$, let
	\[
	g\bk{\mu} = \max\set{f_l\bk{\mu}, \left\lceil \frac{f_l\bk{\lambda'}}{mult\bk{\lambda',r_T^L\tau}} \right\rceil \cdot mult\bk{\mu,r_T^L\tau}} .
	\]	
	
	\item If $f_l<g_{\lambda',L,\tau}$ for some choice of $\bk{\lambda',L,\tau}$, set $f_{l+1}=g$ and go back to step (2).
	Otherwise, we take $k=l$ and the process terminates.
\end{enumerate}

This process is later referred to as a \textbf{branching rule calculations}.

\begin{Remark}
	The uniqueness condition on $\tau$ in item (2) can be slightly relaxed.
	In fact, one actually only needs that $\coset{r_T^L \tau}$ be unique.
	Also, we note that one can apply this process to any function $f_0\leq f_\sigma$ and not only delta functions, in this case we call the resulting sequence a \textbf{non-unital $\sigma$-dominated sequence}.
\end{Remark}

A more detailed account of this recipe can be found in  \cite[Subsection 3.3]{SDPS_E6} and \cite[Subsection 3.3]{SDPS_E7}.
Furthermore, a pair of explicit examples of such calculations, in the case of $SL_4\bk{F}$, can be found in \cite[Appendix C]{SDPS_E7} and an explicit example of such calculations in the case of the group $E_6$ can be found in \cite[Appendix B]{SDPS_E6}.

The calculation of the sequence $f_0\leq f_1\leq...\leq f_k\leq f_\sigma$ as above relies on a database of irreducible representations of standard Levi subgroups of $G$,
the database used by us can be found in \Cref{App:Database}.


The terminal function $f_k$ in this sequence provides a lower bound to the multiplicities of exponents appearing in $r_T^G\sigma$.
Throughout this manuscript, we refer to this process as a \emph{branching rule calculation} (associated to $\lambda$).

In particular, for $\pi=i_{M_i}^G\bk{\Omega_{M_i,s,\chi}}$, let $\lambda_{a.d.}$ denote an anti-dominant exponent of $\pi$ and let  $\pi_0$ denote an irreducible subquotient of $\pi$ such that $\lambda_{a.d.}\leq r_T^G\pi_0$.
We construct a sequence $f_0\leq f_1\leq...\leq f_k\leq f_{\pi_0}$ as above.
If it holds that $f=f_\pi$, then it follows that $r_T^G\pi=r_T^G\pi_0$ and since all subquotients of $\pi$ have their cuspidal support along $B$, it follows that $\pi=\pi_0$.
In particular, $\pi$ is irreducible.

In fact, in order to prove that $\pi$ is irreducible, it is enough to show that
\[
\begin{array}{l}
mult\bk{\lambda_0,r_T^G\pi_0} = mult\bk{\lambda_0,r_T^G\pi} \\
mult\bk{\lambda_1,r_T^G\pi_0} = mult\bk{\lambda_1,r_T^G\pi} ,
\end{array}
\]
where $\lambda_0$ is the initial exponent of $\pi$ and $\lambda_1$ is its terminal exponent, namely the initial exponent of the invert of $\pi$.
This would imply that $\pi$ is both its unique irreducible subrepresentation and quotient and hence it is irreducible.

\subsubsection{Irreducible Subrepresentations}
\label{Subsubsec:UIS}

Bellow, we describe a few methods that can be realized as part of the algorithm, in order to determine the length of the maximal semi-simple subrepresentation of the degenerate principal series representation $\pi=i_{M_i,s,\chi}$.




\begin{enumerate}
	\item If $mult\bk{\lambda_0,r_T^G\pi}=1$, where $\lambda_0$ is the initial exponent of $\pi$, then $\pi$ admits a unique irreducible subrepresentation.
	
	\item Let $\lambda_{a.d.}$ denote an anti-dominant exponent of $\pi$ and let $\pi_0$ denote an irreducible subquotient of $\pi$ such that $\lambda_{a.d.}\leq r_T^G\pi_0$.
	If
	\begin{equation}
	\label{Eq:Mult_condition_UIS}
	mult\bk{\lambda_0,r_T^G\pi_0}=mult\bk{\lambda_0,r_T^G\pi},
	\end{equation}	
	then $\pi_0$ is the unique irreducible subrepresentation of $\pi$.
	
	The condition in \Cref{Eq:Mult_condition_UIS} can, in many cases, be verified using branching rule calculations.
	Also, this argument seems to be relevant only when $s\leq 0$, since otherwise $\pi$ would be irreducible (and thus would not be the subject of investigation for this part of the algorithm).
	
	\item Given a unitary degenerate principal series $\pi=i_{M_i}^G\bk{\Omega_{M_i,0,\chi}}$, then $\pi$ is semi-simple of length at most $2$ by \cite[Lemma 5.2]{MR1951440}.
	In particular, if it is reducible, it is of length $2$.

\end{enumerate}

We note that, in a number of cases, the length of the semi-simple subrepresentation cannot be determined by these methods.
This can happen for cases where the length of the semi-simple subrepresentation can be either $1$ or more.
These cases are listed in \Cref{Thm:Main_theorem} and are dealt with in \Cref{Sec:Exceptional_Cases} using various other methods.

\begin{Remark}
	For $s>0$, the multiplicity of the initial exponent always appears in $r_T^G\pi$ with multiplicity $1$ and thus $\pi$ admits a unique irreducible subrepresentation.
	Conversely, this is why $\pi$ always admits a unique irreducible quotient when $s<0$.
\end{Remark}

\subsection{The Exceptional Group of Type $E_8$}
\label{Subsec:GroupData}

Let $G$ be the split, semi-simple, simply-connected group of type $E_8$.
In this subsection we describe the structure of $G$.
We fix a Borel subgroup $\para{B}$ and a maximal split torus $T\subset \para{B}$ with notations as in \Cref{Subsec:Notations}.
The set of roots, $\Phi_{G}$, contains $240$ roots. The group $G$ is generated by symbols \[\set{x_{\alpha}(r) \: : \: \alpha \in \Phi_{G} ,r \in F}\]
subject to the Chevalley relations as in \cite[Section 6]{MR0466335}.

We label the simple roots $\Delta_{G}$ and the Dynkin diagram of $G$ using the Bourbaki labelling:
\[\begin{tikzpicture}[scale=0.5]
\draw (-1,0) node[anchor=east]{};
\draw (0 cm,0) -- (12 cm,0);
\draw (4 cm, 0 cm) -- +(0,2 cm);
\draw[fill=black] (0 cm, 0 cm) circle (.25cm) node[below=4pt]{$\alpha_1$};
\draw[fill=black] (2 cm, 0 cm) circle (.25cm) node[below=4pt]{$\alpha_3$};
\draw[fill=black] (4 cm, 0 cm) circle (.25cm) node[below=4pt]{$\alpha_4$};
\draw[fill=black] (6 cm, 0 cm) circle (.25cm) node[below=4pt]{$\alpha_5$};
\draw[fill=black] (8 cm, 0 cm) circle (.25cm) node[below=4pt]{$\alpha_6$};
\draw[fill=black] (10 cm, 0 cm) circle (.25cm) node[below=4pt]{$\alpha_7$};
\draw[fill=black] (12 cm, 0 cm) circle (.25cm) node[below=4pt]{$\alpha_8$};
\draw[fill=black] (4 cm, 2 cm) circle (.25cm) node[right=3pt]{$\alpha_2$};
\end{tikzpicture}\]

Recall that for  $\Theta \subset \Delta_{G}$ we denote by $M_{\Theta}$ the standard Levi subgroup  of $G$ such that $\Delta_{M} = \Theta$. We let $M_{i}$ denote the Levi subgroup of the maximal parabolic subgroup $\para{P}_{i}= \para{P}_{\Delta_{G} \setminus \set{\alpha_i}}$. 

\begin{Lem}
	Under these notations, it holds that:
	\begin{enumerate}[ref =\Cref{Lemma::structrue::E7}.(\arabic*)] 
		\item $M_1\cong \set{g\in GSpin_{14}\bk{F} \mvert \det\bk{g}\in \bk{F^\times}^2}$.
		\item $M_2\cong GL_8\bk{F}$.
		\item $M_3\cong \set{\bk{g_1,g_2}\in GL_2\bk{F}\times GL_7\bk{F} \mvert \det \bk{g_1} = \det \bk{g_2}}$.
		\item $M_4\cong \set{\bk{g_1,g_2,g_3}\in GL_3\bk{F}\times GL_2\bk{F}\times GL_5\bk{F} \mvert \det \bk{g_1} = \det \bk{g_2} = \det\bk{g_3}}$.
		\item $M_5\cong \set{\bk{g_1,g_2}\in GL_5\bk{F}\times GL_4\bk{F} \mvert \det \bk{g_1} = \det \bk{g_2}}$.
		\item $M_6\cong \set{\bk{g_1,g_2}\in GSpin_{10}\bk{F}\times GL_3\bk{F} \mvert \det \bk{g_1} = \det \bk{g_2} \in \bk{F^\times}^2}$.
		\item $M_7\cong \set{\bk{g_1,g_2}\in GE_6\bk{F}\times GL_2\bk{F} \mvert \det \bk{g_1} = \det \bk{g_2}}$.
		\item $M_8\cong GE_7\bk{F}$.
	\end{enumerate} 
	Where $\det$ denotes the similitude factors on the relevant groups (in particular, the similitude factor on $GL_n$ is the usual determinant).
\end{Lem}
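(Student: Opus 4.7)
The plan is to handle all eight cases uniformly via a root-datum analysis, then identify each Levi with the stated similitude group case by case. The first step is to read off the root system of the derived subgroup $[M_i,M_i]$ from the sub-diagram on $\Delta_G\setminus\set{\alpha_i}$: inspection yields types $D_7$, $A_7$, $A_1\times A_6$, $A_2\times A_1\times A_4$, $A_4\times A_3$, $D_5\times A_2$, $E_6\times A_1$, and $E_7$ for $i=1,\ldots,8$ respectively. Since $G$ is simply-connected and $E_8$ is self-dual, $X_*(T)$ coincides with the $E_8$ coroot lattice; hence the sublattice generated by the coroots of $[M_i,M_i]$ is saturated in $X_*(T)$, and $[M_i,M_i]$ is the simply-connected form of the corresponding type.

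Next, I would identify $Z(M_i)^\circ$ as the one-parameter subgroup parametrized by the fundamental coweight $\check{\omega_{\alpha_i}}$, so that $M_i=Z(M_i)^\circ\cdot [M_i,M_i]$ with finite intersection $Z(M_i)^\circ\cap [M_i,M_i]\subseteq Z([M_i,M_i])$. The isomorphism class of $M_i$ is pinned down by this intersection, which equals the class of $\check{\omega_{\alpha_i}}$ modulo the coroot lattice $\bigoplus_{j\neq i}\Z\check{\alpha_j}$ of $[M_i,M_i]$. I would compute this class explicitly using the inverse Cartan matrix of $E_8$. On the other side, each claimed similitude group $H_i$ admits the presentation $(H_i^{sc}\times\Gm)/C_i$, where $C_i\subseteq Z(H_i^{sc})$ is the kernel of the similitude character restricted to the center, and the isomorphism $M_i\cong H_i$ then follows by matching these finite central subgroups and defining the similitude character on $M_i$ as the character dual to $\check{\omega_{\alpha_i}}$ via the fundamental weight $\omega_{\alpha_i}$.

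The main obstacle is the explicit central-intersection computation for each $i$. The cases $i=2,3,4,5,7,8$ involve only factors of types $A$ and $E$ with cyclic fundamental groups, for which the calculation is a direct determinant tracking and produces the advertised conditions $\det(g_1)=\det(g_2)$ (and, for $i=4$, $\det(g_1)=\det(g_2)=\det(g_3)$). The cases $i=1$ and $i=6$ are more delicate: $Z(Spin_{14})$ and $Z(Spin_{10})$ are both cyclic of order $4$, and I would need to verify that $\check{\omega_{\alpha_i}}$ reduces modulo the coroot lattice to the unique order-$2$ element of this $\mu_4$. This reduction is what forces the similitude factor to lie in $(F^\times)^2$ and accounts for the $\det\in(F^\times)^2$ clause appearing only in the statements for $M_1$ and $M_6$.
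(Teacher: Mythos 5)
The paper states this lemma without proof, so there is no argument of the authors to compare yours against step by step. Your general strategy is certainly the standard one: the derived group $[M_i,M_i]$ is simply connected of the type read off from the sub-diagram because the simple coroots of $E_8$ form a $\Z$-basis of $X_*(T)=\Z\Phi_G^{\vee}$, the connected centre is the one-dimensional subtorus in the direction of $\omega_{\alpha_i}^{\vee}$, and the group is pinned down by the finite intersection $Z(M_i)^\circ\cap[M_i,M_i]$, computed from the inverse Cartan matrix. For $i=2,3,4,5,7,8$ this does yield the stated fibre products after routine bookkeeping (including the check that the various possible embeddings $\mu_n\hookrightarrow Z(SL_n)$ all produce groups isomorphic to the stated ones).

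The gap is in the two cases you yourself single out as delicate, and it occurs at exactly the point you propose to verify. The relevant diagonal entries of the inverse Cartan matrix of $E_8$ are $\langle\omega_{\alpha_1},\omega_{\alpha_1}^{\vee}\rangle=4$ and $\langle\omega_{\alpha_6},\omega_{\alpha_6}^{\vee}\rangle=12$, so $Z(M_1)^\circ\cap[M_1,M_1]$ is cyclic of order $4$; since $\omega_{\alpha_1}^{\vee}=4\alpha_1^{\vee}+5\alpha_2^{\vee}+7\alpha_3^{\vee}+10\alpha_4^{\vee}+8\alpha_5^{\vee}+6\alpha_6^{\vee}+4\alpha_7^{\vee}+2\alpha_8^{\vee}$ and $\gcd(5,7,10,8,6,4,2)=1$, this $\mu_4$ injects into, hence equals, the full centre $Z(Spin_{14})\cong\mu_4$ — it is not the order-$2$ subgroup. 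Likewise for $i=6$ the intersection is $\mu_{12}$, surjecting onto all of $Z(Spin_{10})\cong\mu_4$. Your claim is also internally inconsistent: if the intersection were $\ker(Spin_{14}\to SO_{14})$, then $M_1$ would simply be isomorphic to $GSpin_{14}$ and no restriction on the similitude factor would appear in the statement. More fundamentally, the condition $\det(g)\in(F^\times)^2$ does not define an algebraic subgroup, so it cannot be produced by any computation carried out over $\bar{F}$; one must pass to $F$-points and compare images of rational points under the relevant central isogenies (using, e.g., the vanishing of $H^1(F,Spin_{2n})$ and the $F$-rationality of the central elements involved). This Galois-cohomological step, which is the actual content of the assertions for $M_1$ and $M_6$, is absent from your outline.
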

We record here, for $1 \leq i \leq 8$, the cardinality of $W^{M_i,T}$, the set of shortest representatives of $\weyl{G} \slash \weyl{M_i}$
\begin{center}
	\begin{tabular}{|c|c|c|c|c|c|c|c|c|}
		\hline
		$i$ & 1 & 2 & 3 & 4 & 5 & 6 & 7 & 8 \\
		\hline 
		$|W^{M_i,T}|$ & 2,160 & 17,280 & 69,120 & 483,840 & 241,920 & 60,480 & 6,720 & 240 \\
		\hline
	\end{tabular} 
\end{center}
\vspace{0.5cm}
We also mention that $|\weyl{G}| =  696,729,600$.
Every $\lambda \in \bfX(T)$ is of the form 
\[\lambda = \sum_{i=1}^{8} \Omega_{i} \circ \fun{\alpha_i}.\]
As a shorthand, we will write
\[\eeightchar{\Omega_1}{\Omega_2}{\Omega_3}{\Omega_4}{\Omega_5}{\Omega_6}{\Omega_7}{\Omega_8}=\sum_{i=1}^{8} \Omega_{i} \circ \fun{\alpha_i}.\]


\section{The Main Theorem}
\label{Sec:Main_Theorem}

\begin{Thm}
	\label{Thm:Main_theorem}
	Let $\pi=\Ind_{\para{M}_i}^{G}(\Omega_{\para{M}_i,s,\chi})$ with $s\leq 0$ and let $k=ord(\chi)$.
	\begin{enumerate}
	\item
	The following tables
	Tables \ref{Tab::E7::REG::P_1}-\ref{Tab::E7::REG::P_8} bellow lists all triples $[i,s,k]$ such that $\pi$ is either non-regular or reducible.
	In particular, for each triple $[i,s,k]$ the entry in the $i$th table for this value of $s$ and $k$ will be
	\begin{itemize}
		\item \textbf{irr.} for non-regular and irreducible $\pi$. 
		\item \textbf{red.} for non-regular and reducible $\pi$. 
		\item \textbf{red.${}^*$} for regular and reducible $\pi$. 
	\end{itemize}
	For any triple $[i,s,k]$, not appearing in the tables, the degenerate principal series $\Ind_{M_i}^{G}(\Omega_{M_i,s,\chi})$, with $ord(\chi)=k$, is regular and irreducible.

	\item 
	All	$\pi = \Ind_{M_{i}}^{G}\bk{\Omega_{M_i,s,\chi}}$	admit a unique irreducible subrepresentation, with the exception of:
	\begin{enumerate}
		\item $[i,s,k]$ is one of $[1,-5/2,1]$, $[3,-1/2,2]$ $[6,0,1]$, $[6,0,2]$ and $[7,-3/2,1]$. 
		In these cases, the representation $\pi$ admits a maximal semi-simple subrepresentation of length $2$.
		\item $[i,s,k]$ is one of $[2,-1/2,1]$ and $[5,-1/2,1]$, in which case the length of the maximal semi-simple subrepresentation of $\pi$ is at most $2$.
	\end{enumerate}

\end{enumerate}
\end{Thm}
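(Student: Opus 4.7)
The plan is to apply the algorithm of \Cref{Subsec:The_Algorithm} systematically to every triple $[i,s,k]$ with $1\leq i\leq 8$, $s\leq 0$ and $k\in\N$, using the Sagemath implementation. By the invert symmetry recalled in \Cref{Subsec:Notations}, restricting to $s\leq 0$ loses no information, and both non-regularity and reducibility for a fixed pair $(i,k)$ can occur only at a finite set of real values of $s$, which in turn bounds the relevant values of $k$. Hence the algorithm needs to be run on a finite list of triples in order to produce both tables in part~(1) and the exception list in part~(2). Throughout, the branching step relies on the database of irreducible Levi representations collected in Appendix~A.

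First I would enumerate the non-regular triples. Since the initial exponent $\lambda_0=r_T^{M_i}\bk{\Omega_{M_i,s,\chi}}$ is an explicit combination of the $\omega_{\alpha_i}$ described in \Cref{Subsec:GroupData}, the condition $\Stab_W\bk{\lambda_0}\neq\set{1}$ reduces to a finite list of equations $\gen{\lambda_0,\check{\beta}}=0$ for $\beta\in\Phi_G$, whose $(s,k)$-solutions give the non-regular triples. For regular triples, reducibility is determined by the standard rank-one analysis of \cite[Subsection 3.1]{SDPS_E6} applied along a reduced expression shortening $\lambda_0$ to its anti-dominant representative, and this produces all entries marked red.${}^*$.

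For the non-regular triples, I would first run the reducibility test of \Cref{Subsubsec:Reducibility} against all candidate companions $\pi'=i_{M_j}^G\bk{\Omega_{M_j,t,\chi^l}}$ with $l$ a totative of $k$, and when needed against the enlarged family $i_{M_{j_1,j_2}}^G\bk{\Omega_{s_1,s_2,\chi,k_1,k_2}}$ of \Cref{Eq:1_dim_rep_of_almost_max_Levi}, invoking Tadi\'c's criterion whenever $\pi$ and $\pi'$ share an anti-dominant exponent while $r_T^G\pi\nleq r_T^G\pi'$. When this test is inconclusive I would run the branching-rule construction of \Cref{Subsubsec:Irreducibility} from both the initial exponent $\lambda_0$ and the terminal exponent $\lambda_1$: if the resulting $\pi$-dominated sequences reach $f_\pi$ at these two exponents, $\pi$ is simultaneously its unique subrepresentation and unique quotient, hence irreducible. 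This combined pipeline yields the irr./red. labeling for every non-regular triple, completing part~(1).

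For part~(2) I would apply the three criteria of \Cref{Subsubsec:UIS} in order: multiplicity one of the initial exponent, a branching match $mult\bk{\lambda_0,r_T^G\pi_0}=mult\bk{\lambda_0,r_T^G\pi}$ for a distinguished irreducible constituent $\pi_0$, and, for unitary $s=0$, the length-at-most-$2$ result of \cite{MR1951440}. The main obstacle is the small set of triples for which all three criteria are inconclusive and where Jacquet-module combinatorics alone cannot separate potential socle constituents from their siblings in the composition series; these are the exceptional cases treated individually in \Cref{Sec:Exceptional_Cases} by a mixture of Langlands-quotient identifications, auxiliary inductions from non-maximal Levis, and explicit intertwining-operator computations, with the two residual triples $[2,-1/2,1]$ and $[5,-1/2,1]$ yielding only the upper bound $2$ on the socle length as described in \Cref{Subsec:Unresolved_Cases}.
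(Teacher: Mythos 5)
Your proposal follows essentially the same route as the paper: finitely enumerating the non-regular and regular-reducible triples, running the Tadi\'c-style companion test and the branching-rule irreducibility test on the non-regular ones, applying the three socle criteria of \Cref{Subsubsec:UIS}, and deferring the finitely many inconclusive triples (including $[2,-1/2,1]$ and $[5,-1/2,1]$) to the case-by-case analysis of \Cref{Sec:Exceptional_Cases}. This matches the paper's proof, which is exactly this algorithmic pipeline together with the tables of companion data and the individual arguments of \Cref{Sec:Exceptional_Cases}.
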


\begin{landscape}
	\begin{enumerate}
	\item 
	For $\para{P}=\para{P}_{1}$ 
	\begin{center} 
		\begin{longtable}{|c|c|c|c|c|c|c|c|c|c|c|c|c|c|c|c|c|} 
			\hline 
			\diagbox{$ord\bk{\chi}$}{$s$}  & $-\frac{23}{2}$  & $-\frac{21}{2}$  & $-\frac{19}{2}$  & $-\frac{17}{2}$  & $-\frac{15}{2}$  & $-\frac{13}{2}$  & $-\frac{11}{2}$  & $-\frac{9}{2}$  & $-\frac{7}{2}$  & $-3$  & $-\frac{5}{2}$  & $-2$  & $-\frac{3}{2}$  & $-1$  & $-\frac{1}{2}$  & $0$  
			\\ \hline 
			$ 1 $ & \RR & \NRI & \NRI & \NRR & \NRI & \NRR & \NRR & \NRI & \NRR & \NRI & \NRR & \NRI & \NRI & \NRI & \NRR & \NRI  
			\\ \hline 
			$ 2 $ & \RI & \RI & \RI & \RI & \RI & \RI & \RI & \RI & \RR & \NRI & \NRI & \NRI & \NRI & \NRI & \NRR & \NRI  
			\\ \hline 
			\caption{$\para{P}_{1}$-Reducibility Points}
			\label{Tab::E7::REG::P_1} 
		\end{longtable} 
		
	\end{center} 
	\item 
	For $\para{P}=\para{P}_{2}$ 
	\begin{center} 
		\begin{longtable}{|c|c|c|c|c|c|c|c|c|c|c|c|c|c|c|c|c|c|c|c|c|c|c|c|c|c|c|c|c|c|c|c} 
			\hline 
			\diagbox{$ord\bk{\chi}$}{$s$}  & $-\frac{17}{2}$  & $-\frac{15}{2}$  & $-\frac{13}{2}$  & $-\frac{11}{2}$  & $-\frac{9}{2}$  & $-\frac{7}{2}$  & $-3$  & $-\frac{5}{2}$  & $-2$  & $-\frac{3}{2}$  & $-\frac{7}{6}$  & $-1$  & $-\frac{5}{6}$  & $-\frac{1}{2}$  & $-\frac{1}{6}$  & $0$  
			\\ \hline 
			$ 1 $ & \RR & \NRI & \NRR & \NRR & \NRR & \NRR & \NRI & \NRR & \NRI & \NRR & \NRI & \NRI & \NRI & \NRR & \NRI & \NRI  
			\\ \hline 
			$ 2 $ & \RI & \RI & \RI & \RI & \RI & \RR & \NRI & \NRR & \NRI & \NRR & \RI & \NRI & \RI & \NRR & \RI & \NRI  
			\\ \hline 
			$ 3 $ & \RI & \RI & \RI & \RI & \RI & \RI & \RI & \RI & \RI & \RR & \NRI & \RI & \NRI & \NRI & \NRI & \RI  
			\\ \hline 
			\caption{$\para{P}_{2}$-Reducibility Points}
			\label{Tab::E7::REG::P_2} 
		\end{longtable} 
		
	\end{center} 
	\item 
	For $\para{P}=\para{P}_{3}$ 
	\begin{center} 
		\begin{longtable}{|c|c|c|c|c|c|c|c|c|c|c|c|c|c|c|c|c|c|c|c|c|c|c|c|c|c|c|c|c|c|c|c} 
			\hline 
			\diagbox{$ord\bk{\chi}$}{$s$}  & $-\frac{13}{2}$  & $-\frac{11}{2}$  & $-\frac{9}{2}$  & $-\frac{7}{2}$  & $-3$  & $-\frac{5}{2}$  & $-2$  & $-\frac{3}{2}$  & $-\frac{7}{6}$  & $-1$  & $-\frac{5}{6}$  & $-\frac{3}{4}$  & $-\frac{1}{2}$  & $-\frac{1}{4}$  & $-\frac{1}{6}$  & $0$  
			\\ \hline 
			$ 1 $ & \RR & \NRR & \NRR & \NRR & \NRI & \NRR & \NRR & \NRR & \NRR & \NRR & \NRI & \NRI & \NRR & \NRI & \NRI & \NRI  
			\\ \hline 
			$ 2 $ & \RI & \RI & \RI & \RR & \NRI & \NRR & \NRR & \NRR & \RI & \NRR & \RI & \NRI & \NRR & \NRI & \RI & \NRI  
			\\ \hline 
			$ 3 $ & \RI & \RI & \RI & \RI & \RI & \RI & \RI & \RR & \NRR & \RI & \NRI & \RI & \NRI & \RI & \NRI & \RI  
			\\ \hline 
			$ 4 $ & \RI & \RI & \RI & \RI & \RI & \RI & \RI & \RI & \RI & \RR & \RI & \NRI & \NRI & \NRI & \RI & \NRI  
			\\ \hline 
			\caption{$\para{P}_{3}$-Reducibility Points} 
			\label{Tab::E7::REG::P_3} 
		\end{longtable} 
		
	\end{center} 
	\item 
	For $\para{P}=\para{P}_{4}$ 
	\begin{center} 
		\begin{longtable}{|c|c|c|c|c|c|c|c|c|c|c|c|c|c|c|c|c|c|c|c|c|c|c|c|c|c|c|c|c|c|c|c} 
			\hline 
			\diagbox{$ord\bk{\chi}$}{$s$}  & $-\frac{9}{2}$  & $-\frac{7}{2}$  & $-\frac{5}{2}$  & $-2$  & $-\frac{3}{2}$  & $-\frac{7}{6}$  & $-1$  & $-\frac{5}{6}$  & $-\frac{3}{4}$  & $-\frac{1}{2}$  & $-\frac{1}{3}$  & $-\frac{3}{10}$  & $-\frac{1}{4}$  & $-\frac{1}{6}$  & $-\frac{1}{10}$  & $0$  
			\\ \hline 
			$ 1 $ & \RR & \NRR & \NRR & \NRR & \NRR & \NRR & \NRR & \NRR & \NRR & \NRR & \NRI & \NRR & \NRI & \NRI & \NRI & \NRI  
			\\ \hline 
			$ 2 $ & \RI & \RI & \RR & \NRR & \NRR & \RI & \NRR & \RI & \NRR & \NRR & \NRI & \RI & \NRI & \NRI & \RI & \NRI  
			\\ \hline 
			$ 3 $ & \RI & \RI & \RI & \RI & \RR & \NRR & \RI & \NRR & \RI & \NRR & \NRI & \RI & \RI & \NRI & \RI & \NRI  
			\\ \hline 
			$ 4 $ & \RI & \RI & \RI & \RI & \RI & \RI & \RR & \RI & \NRR & \NRR & \RI & \RI & \NRI & \RI & \RI & \NRI  
			\\ \hline 
			$ 5 $ & \RI & \RI & \RI & \RI & \RI & \RI & \RI & \RI & \RI & \RR & \RI & \NRR & \RI & \RI & \NRI & \RI  
			\\ \hline 
			$ 6 $ & \RI & \RI & \RI & \RI & \RI & \RI & \RI & \RI & \RI & \RR & \NRI & \RI & \RI & \NRI & \RI & \NRI  
			\\ \hline 
			\caption{$\para{P}_{4}$-Reducibility Points} 
			\label{Tab::E7::REG::P_4} 
		\end{longtable} 
		
	\end{center} 
	\item 
	For $\para{P}=\para{P}_{5}$ 
	\begin{center} 
		\begin{longtable}{|c|c|c|c|c|c|c|c|c|c|c|c|c|c|c|c|c|c|c|c|c|c|c|c|c|c|c|c|c|c|c|c} 
			\hline 
			\diagbox{$ord\bk{\chi}$}{$s$}  & $-\frac{11}{2}$  & $-\frac{9}{2}$  & $-\frac{7}{2}$  & $-\frac{5}{2}$  & $-2$  & $-\frac{3}{2}$  & $-\frac{7}{6}$  & $-1$  & $-\frac{5}{6}$  & $-\frac{3}{4}$  & $-\frac{1}{2}$  & $-\frac{3}{10}$  & $-\frac{1}{4}$  & $-\frac{1}{6}$  & $-\frac{1}{10}$  & $0$  
			\\ \hline 
			$ 1 $ & \RR & \NRR & \NRR & \NRR & \NRR & \NRR & \NRR & \NRR & \NRR & \NRI & \NRR & \NRI & \NRI & \NRI & \NRI & \NRI  
			\\ \hline 
			$ 2 $ & \RI & \RI & \RI & \RR & \NRR & \NRR & \RI & \NRR & \RI & \NRI & \NRR & \RI & \NRI & \RI & \RI & \NRI  
			\\ \hline 
			$ 3 $ & \RI & \RI & \RI & \RI & \RI & \RR & \NRR & \RI & \NRR & \RI & \NRR & \RI & \RI & \NRI & \RI & \RI  
			\\ \hline 
			$ 4 $ & \RI & \RI & \RI & \RI & \RI & \RI & \RI & \RR & \RI & \NRI & \NRR & \RI & \NRI & \RI & \RI & \NRI  
			\\ \hline 
			$ 5 $ & \RI & \RI & \RI & \RI & \RI & \RI & \RI & \RI & \RI & \RI & \RR & \NRI & \RI & \RI & \NRI & \RI  
			\\ \hline 
			\caption{$\para{P}_{5}$-Reducibility Points} 
			\label{Tab::E7::REG::P_5} 
		\end{longtable} 
		
	\end{center} 
	
	\item 
	For $\para{P}=\para{P}_{6}$ 
	\begin{center} 
		\begin{longtable}{|c|c|c|c|c|c|c|c|c|c|c|c|c|c|c|c|c|c|c|c|c|c|c|c|c|c|c|c|c|c|c|c} 
			\hline 
			\diagbox{$ord\bk{\chi}$}{$s$}  & $-7$  & $-6$  & $-5$  & $-4$  & $-3$  & $-\frac{5}{2}$  & $-2$  & $-\frac{5}{3}$  & $-\frac{3}{2}$  & $-\frac{4}{3}$  & $-1$  & $-\frac{2}{3}$  & $-\frac{1}{2}$  & $-\frac{1}{3}$  & $-\frac{1}{4}$  & $0$  
			\\ \hline 
			$ 1 $ & \RR & \NRR & \NRR & \NRR & \NRR & \NRR & \NRR & \NRI & \NRI & \NRI & \NRR & \NRI & \NRR & \NRI & \NRI & \NRR  
			\\ \hline 
			$ 2 $ & \RI & \RI & \RI & \RI & \RR & \NRR & \NRR & \RI & \NRI & \RI & \NRR & \RI & \NRR & \RI & \NRI & \NRR  
			\\ \hline 
			$ 3 $ & \RI & \RI & \RI & \RI & \RI & \RI & \RR & \NRI & \RI & \NRI & \NRR & \NRI & \RI & \NRI & \RI & \NRI  
			\\ \hline 
			$ 4 $ & \RI & \RI & \RI & \RI & \RI & \RI & \RI & \RI & \RI & \RI & \RI & \RI & \RR & \RI & \NRI & \NRI  
			\\ \hline 
			\caption{$\para{P}_{6}$-Reducibility Points} 
			\label{Tab::E7::REG::P_6} 
		\end{longtable} 
		
	\end{center} 
	\item 
	For $\para{P}=\para{P}_{7}$ 
	\begin{center} 
		\begin{longtable}{|c|c|c|c|c|c|c|c|c|c|c|c|c|c|c|c|c|c|c|c|c|c|c|c|c|c|c|c|c|c|c|c} 
			\hline 
			\diagbox{$ord\bk{\chi}$}{$s$}  & $-\frac{19}{2}$  & $-\frac{17}{2}$  & $-\frac{15}{2}$  & $-\frac{13}{2}$  & $-\frac{11}{2}$  & $-\frac{9}{2}$  & $-4$  & $-\frac{7}{2}$  & $-3$  & $-\frac{5}{2}$  & $-2$  & $-\frac{3}{2}$  & $-1$  & $-\frac{1}{2}$  & $-\frac{1}{6}$  & $0$  
			\\ \hline 
			$ 1 $ & \RR & \NRR & \NRI & \NRI & \NRR & \NRR & \NRI & \NRI & \NRI & \NRR & \NRI & \NRR & \NRI & \NRR & \NRI & \NRI  
			\\ \hline 
			$ 2 $ & \RI & \RI & \RI & \RI & \RI & \RR & \NRI & \NRI & \NRI & \NRR & \NRI & \NRI & \NRI & \NRR & \RI & \NRI  
			\\ \hline 
			$ 3 $ & \RI & \RI & \RI & \RI & \RI & \RI & \RI & \RI & \RI & \RI & \RI & \RI & \RI & \RR & \NRI & \RI  
			\\ \hline 
			\caption{$\para{P}_{7}$-Reducibility Points} 
			\label{Tab::E7::REG::P_7} 
		\end{longtable} 
		
	\end{center} 
	\item 
	For $\para{P}=\para{P}_{8}$ 
	\begin{center} 
		\begin{longtable}{|c|c|c|c|c|c|c|c|c|c|c|c|c|c|c|c|c|c|c|c|c|c|c|c|c|c|c|c|c|c|c|c} 
			\hline 
			\diagbox{$ord\bk{\chi}$}{$s$}  & $-\frac{29}{2}$  & $-\frac{27}{2}$  & $-\frac{25}{2}$  & $-\frac{23}{2}$  & $-\frac{21}{2}$  & $-\frac{19}{2}$  & $-\frac{17}{2}$  & $-\frac{15}{2}$  & $-\frac{13}{2}$  & $-\frac{11}{2}$  & $-\frac{9}{2}$  & $-\frac{7}{2}$  & $-\frac{5}{2}$  & $-\frac{3}{2}$  & $-\frac{1}{2}$  & $0$  
			\\ \hline 
			$ 1 $ & \RR & \NRI & \NRI & \NRI & \NRI & \NRR & \NRI & \NRI & \NRI & \NRR & \NRI & \NRI & \NRI & \NRI & \NRR & \NRI  
			\\ \hline 
			$ 2 $ & \RI & \RI & \RI & \RI & \RI & \RI & \RI & \RI & \RI & \RI & \RI & \RI & \RI & \RI & \RR & \NRI  
			\\ \hline 
			\caption{$\para{P}_{8}$-Reducibility Points} 
			\label{Tab::E7::REG::P_8}
		\end{longtable} 
		
	\end{center} 
\end{enumerate}
\end{landscape} 


\begin{Remark}
	According to \cite{MR2123125}, the minimal representation of $E_8$ is the unique irreducible subrepresentation of $[8,-19/2,1]$ (Note that $P_8$ is the Heisenberg parabolic subgroup of $E_8$).
	From the data provided by Tadi\'c's reducibility criterion, it follows that the minimal representation is also a subquotient of the following cases: $[1,-17/2,1]$, $[2,-13/2,1]$ and $[4,-7/2,1]$.
	Indeed, it is isomorphic to their unique irreducible subrepresentations.
\end{Remark}

\begin{proof}
	We separate the proof into four parts: proof of reducibility (for all reducible $\pi$), proof of irreducibility (for most irreducible cases), proof of unique irreducible subrepresentation (for most cases) and exceptional cases.
	The last part is dealt with in \Cref{Sec:Exceptional_Cases} while the remainder of this section is devoted to the first three parts (which use the algorithm from \Cref{Subsec:The_Algorithm}).
	
	\subsection{Reducibility}
	\label{Subsec:Reducibility_Tables}
	
	For most reducible cases, it is enough to find $\pi'\neq \pi$ which shares an irreducible subquotient with $\pi$.
	As explained in \Cref{Subsubsec:Reducibility}, for most cases, one can find $\pi'=i_{M_i}^G\bk{\Omega_{M_j,t,\chi^l}}$ which satisfy the required conditions, while in others one needs to find $\pi'$ of the form $\pi'=i_{M_{j_1,j_2}}^G\Omega_{s_1,s_2,\chi,k_1,k_2}$, where $\Omega_{s_1,s_2,\chi,k_1,k_2}$ is given by \Cref{Eq:1_dim_rep_of_almost_max_Levi}.
	
	In the following tables we list, for each reducible $\coset{i,s,ord\bk{\chi}}$ a triple $\coset{j,t,ord\bk{\chi^l}}=\coset{j,t,ord\bk{\chi}}$ or $\coset{\coset{j_1,j_2}, \coset{s_1,s_2},\coset{k_1,k_2}}$ which provides a representation $\pi'\neq \pi$ sharing a common irreducible subquotient with $\pi$:
	
	\begin{itemize} 
		\item 
		For $\para{P}=\para{P}_{1}$ 
		\begin{center} 
			\footnotesize 
			\begin{longtable}{|c|c|c|} 
				\hline 
				\diagbox{$s$}{$ord\bk{\chi}$}  & $1$  & $2$  
				\\ \hline 
				$ -\frac{17}{2} $ & $ \left[8, -\frac{19}{2}, 1\right] $ & \notableentry 
				\\ \hline 
				$ -\frac{13}{2} $ & $ \left[8, -\frac{11}{2}, 1\right] $ & \notableentry 
				\\ \hline 
				$ -\frac{11}{2} $ & $ \left[8, -\frac{5}{2}, 1\right] $ & \notableentry 
				\\ \hline 
				$ -\frac{7}{2} $ & $ \left[\left[1, 8\right], \left[-\frac{7}{2}, -2\right], \left[0, 0\right]\right] $ & \notableentry 
				\\ \hline 
				$ -\frac{5}{2} $ & $ \left[3, -\frac{5}{2}, 1\right] $ & \notableentry 
				\\ \hline 
				$ -\frac{1}{2} $ & $ \left[7, -\frac{3}{2}, 1\right] $& $ \left[2, -\frac{5}{2}, 2\right] $ 
				\\ \hline 
				
				\caption{Data for the proof of the reducibility of $\Ind_{M_  1}^{G}(\Omega_{M_{1,s,\chi}})$} 
				\label{Tab::E7::COMP::P_1} 
			\end{longtable} 
		\end{center} 
		\item 
		For $\para{P}=\para{P}_{2}$ 
		\begin{center} 
			\footnotesize 
			\begin{longtable}{|c|c|c|} 
				\hline 
				\diagbox{$s$}{$ord\bk{\chi}$}  & $1$  & $2$   
				\\ \hline 
				$ -\frac{13}{2} $ & $ \left[8, -\frac{19}{2}, 1\right] $ & \notableentry 
				\\ \hline 
				$ -\frac{11}{2} $ & $ \left[8, -\frac{13}{2}, 1\right] $ & \notableentry  
				\\ \hline 
				$ -\frac{9}{2} $ & $ \left[8, -\frac{3}{2}, 1\right] $ & \notableentry  
				\\ \hline 
				$ -\frac{7}{2} $ & $ \left[1, -\frac{7}{2}, 1\right] $ & \notableentry  
				\\ \hline 
				$ -\frac{5}{2} $ & $ \left[7, -\frac{3}{2}, 1\right] $& $ \left[1, -\frac{1}{2}, 2\right] $ \\ \hline 
				$ -2 $  & \notableentry & \notableentry \\ \hline 
				$ -\frac{3}{2} $ & $ \left[\left[1, 8\right], \left[\frac{1}{2}, -2\right], \left[0, 0\right]\right] $& $ \left[\left[2, 8\right], \left[-2, 0\right], \left[1, 1\right]\right] $ \\ \hline 
				$ -\frac{1}{2} $ & $ \left[6, -1, 1\right] $& $ \left[6, -1, 2\right] $ \\ \hline 
				
				\caption{Data for the proof of the reducibility of $\Ind_{M_  2}^{G}(\Omega_{M_{2,s,\chi}})$} 
				\label{Tab::E7::COMP::P_2} 
			\end{longtable} 
		\end{center} 
		\item 
		For $\para{P}=\para{P}_{3}$ 
		\begin{center} 
			\footnotesize 
			\begin{longtable}{|c|c|c|c|} 
				\hline 
				\diagbox{$s$}{$ord\bk{\chi}$}  & $1$  & $2$  & $3$ \\ \hline 
				$ -\frac{11}{2} $ & $ \left[1, -\frac{19}{2}, 1\right] $ & \notableentry & \notableentry \\ \hline 
				$ -\frac{9}{2} $ & $ \left[8, -\frac{15}{2}, 1\right] $ & \notableentry & \notableentry \\ \hline 
				$ -\frac{7}{2} $ & $ \left[7, -\frac{9}{2}, 1\right] $ & \notableentry & \notableentry \\ \hline 
				$ -\frac{5}{2} $ & $ \left[7, -\frac{5}{2}, 1\right] $& $ \left[7, -\frac{5}{2}, 2\right] $ & \notableentry \\ \hline 
				$ -2 $ & $ \left[7, -1, 1\right] $& $ \left[7, -1, 2\right] $ & \notableentry \\ \hline 
				$ -\frac{3}{2} $ & $ \left[2, -\frac{3}{2}, 1\right] $& $ \left[\left[6, 7\right], \left[1, -\frac{7}{2}\right], \left[0, 1\right]\right] $ & \notableentry \\ \hline 
				$ -\frac{7}{6} $ & $ \left[2, -\frac{5}{6}, 1\right] $ & \notableentry& $ \left[\left[4, 5\right], \left[\frac{3}{2}, -\frac{19}{6}\right], \left[0, 1\right]\right] $ \\ \hline 
				$ -1 $ & $ \left[\left[6, 8\right], \left[-1, 0\right], \left[0, 0\right]\right] $& $ \left[\left[6, 7\right], \left[-3, 4\right], \left[0, 1\right]\right] $ & \notableentry \\ \hline 
				$ -\frac{1}{2} $ & $ \left[6, 0, 1\right] $& $ \left[\left[2, 3\right], \left[-\frac{5}{2}, 1\right], \left[1, 1\right]\right] $ & \notableentry \\ \hline 
				
				\caption{Data for the proof of the reducibility of $\Ind_{M_  3}^{G}(\Omega_{M_{3,s,\chi}})$} 
				\label{Tab::E7::COMP::P_3} 
			\end{longtable} 
		\end{center} 
		\item 
		For $\para{P}=\para{P}_{4}$ 
		\begin{center} 
			\tiny 
			\begin{longtable}{|c|c|c|c|c|c|} 
				\hline 
				\diagbox{$s$}{$ord\bk{\chi}$}  & $1$  & $2$  & $3$  & $4$  & $5$  \\ \hline 
				$ -\frac{7}{2} $ & $ \left[8, -\frac{19}{2}, 1\right] $ & \notableentry & \notableentry & \notableentry & \notableentry \\ \hline 
				$ -\frac{5}{2} $ & $ \left[7, -\frac{9}{2}, 1\right] $ & \notableentry & \notableentry & \notableentry & \notableentry \\ \hline 
				$ -2 $ & $ \left[7, -3, 1\right] $& $ \left[7, -3, 2\right] $ & \notableentry & \notableentry & \notableentry \\ \hline 
				$ -\frac{3}{2} $ & $ \left[6, -2, 1\right] $& $ \left[7, -\frac{1}{2}, 2\right] $ & \notableentry & \notableentry & \notableentry \\ \hline 
				$ -\frac{7}{6} $ & $ \left[6, -\frac{4}{3}, 1\right] $ & \notableentry& $ \left[\left[1, 6\right], \left[6, -\frac{10}{3}\right], \left[0, 2\right]\right] $ & \notableentry & \notableentry \\ \hline 
				$ -1 $ & $ \left[3, -1, 1\right] $& $ \left[3, -1, 2\right] $ & \notableentry & \notableentry & \notableentry \\ \hline 
				$ -\frac{5}{6} $ & $ \left[6, -\frac{1}{3}, 1\right] $ & \notableentry& $ \left[6, -\frac{1}{3}, 3\right] $ & \notableentry & \notableentry \\ \hline 
				$ -\frac{3}{4} $ & $ \left[3, -\frac{1}{4}, 1\right] $& $ \left[3, -\frac{1}{4}, 2\right] $ & \notableentry& $ \left[\left[5, 6\right], \left[-\frac{11}{4}, \frac{17}{4}\right], \left[3, 3\right]\right] $ & \notableentry 	\\ \hline 
				$ -\frac{1}{2} $ & $ \left[5, -\frac{1}{2}, 1\right] $& $ \left[\left[3, 8\right], \left[-\frac{1}{2}, \frac{3}{2}\right], \left[1, 0\right]\right] $& $ \left[\left[3, 8\right], \left[-\frac{1}{2}, \frac{3}{2}\right], \left[1, 1\right]\right] $& $ \left[\left[2, 6\right], \left[-1, 0\right], \left[3, 3\right]\right] $ & \notableentry \\ \hline 
				$ -\frac{3}{10} $ & $ \left[5, -\frac{1}{10}, 1\right] $ & \notableentry & \notableentry & \notableentry& $ \left[\left[2, 5\right], \left[-\frac{11}{10}, \frac{1}{5}\right], \left[2, 1\right]\right] $ \\ \hline 
				
				\caption{Data for the proof of the reducibility of $\Ind_{M_  4}^{G}(\Omega_{M_{4,s,\chi}})$} 
				\label{Tab::E7::COMP::P_4} 
			\end{longtable} 
		\end{center} 
		\item 
		For $\para{P}=\para{P}_{5}$ 
		\begin{center} 
			\footnotesize 
			\begin{longtable}{|c|c|c|c|c|} 
				\hline 
				\diagbox{$s$}{$ord\bk{\chi}$}  & $1$  & $2$  & $3$  & $4$ \\ \hline 
				$ -\frac{9}{2} $ & $ \left[8, -\frac{21}{2}, 1\right] $ & \notableentry & \notableentry & \notableentry \\ \hline 
				$ -\frac{7}{2} $ & $ \left[7, -\frac{11}{2}, 1\right] $ & \notableentry & \notableentry & \notableentry \\ \hline 
				$ -\frac{5}{2} $ & $ \left[6, -3, 1\right] $ & \notableentry & \notableentry & \notableentry \\ \hline 
				$ -2 $ & $ \left[1, -1, 1\right] $& $ \left[1, -1, 2\right] $ & \notableentry & \notableentry \\ \hline 
				$ -\frac{3}{2} $ & $ \left[2, -\frac{3}{2}, 1\right] $& $ \left[2, -\frac{3}{2}, 2\right] $ & \notableentry & \notableentry \\ \hline 
				$ -\frac{7}{6} $ & $ \left[2, -\frac{1}{6}, 1\right] $ & \notableentry& $ \left[2, -\frac{1}{6}, 3\right] $ & \notableentry \\ \hline 
				$ -1 $ & $ \left[6, -\frac{1}{2}, 1\right] $& $ \left[6, -\frac{1}{2}, 2\right] $ & \notableentry & \notableentry \\ \hline 
				$ -\frac{5}{6} $ & $ \left[3, -\frac{1}{6}, 1\right] $ & \notableentry& $ \left[\left[2, 3\right], \left[-\frac{7}{6}, 0\right], \left[2, 0\right]\right] $ & \notableentry \\ \hline 
				$ -\frac{1}{2} $ & $ \left[\left[3, 8\right], \left[-\frac{1}{2}, \frac{3}{2}\right], \left[0, 0\right]\right] $& $ \left[\left[3, 7\right], \left[-\frac{3}{2}, \frac{3}{2}\right], \left[1, 1\right]\right] $& $ \left[\left[3, 8\right], \left[-\frac{1}{2}, \frac{3}{2}\right], \left[1, 0\right]\right] $& $ \left[\left[2, 5\right], \left[\frac{1}{2}, -1\right], \left[3, 3\right]\right] $ \\ \hline 
				
				\caption{Data for the proof of the reducibility of $\Ind_{M_  5}^{G}(\Omega_{M_{5,s,\chi}})$} 
				\label{Tab::E7::COMP::P_5} 
			\end{longtable} 
		\end{center} 
		\item 
		For $\para{P}=\para{P}_{6}$ 
		\begin{center} 
			\footnotesize 
			\begin{longtable}{|c|c|c|c|} 
				\hline 
				\diagbox{$s$}{$ord\bk{\chi}$}  & $1$  & $2$  & $3$  \\ \hline 
				$ -6 $ & $ \left[8, -\frac{23}{2}, 1\right] $ & \notableentry & \notableentry \\ \hline 
				$ -5 $ & $ \left[7, -\frac{13}{2}, 1\right] $ & \notableentry & \notableentry \\ \hline 
				$ -4 $ & $ \left[8, -\frac{7}{2}, 1\right] $ & \notableentry & \notableentry \\ \hline 
				$ -3 $ & $ \left[1, -\frac{7}{2}, 1\right] $ & \notableentry & \notableentry \\ \hline 
				$ -\frac{5}{2} $ & $ \left[1, -2, 1\right] $& $ \left[1, -2, 2\right] $ & \notableentry \\ \hline 
				$ -2 $ & $ \left[7, -\frac{1}{2}, 1\right] $& $ \left[\left[2, 3\right], \left[-\frac{3}{2}, -1\right], \left[1, 0\right]\right] $ & \notableentry \\ \hline 
				$ -1 $ & $ \left[2, -\frac{1}{2}, 1\right] $& $ \left[2, -\frac{1}{2}, 2\right] $& $ \left[\left[1, 7\right], \left[-2, \frac{5}{2}\right], \left[1, 1\right]\right] $ \\ \hline 
				$ -\frac{1}{2} $ & $ \left[\left[2, 8\right], \left[-\frac{1}{2}, -1\right], \left[0, 0\right]\right] $& $ \left[\left[1, 2\right], \left[1, -\frac{3}{2}\right], \left[0, 1\right]\right] $ & \notableentry \\ \hline 
				$ 0 $ & $ \left[3, -\frac{1}{2}, 1\right] $& $ \left[\left[3, 7\right], \left[-\frac{3}{2}, \frac{1}{2}\right], \left[1, 1\right]\right] $ & \notableentry \\ \hline  
				\caption{Data for the proof of the reducibility of $\Ind_{M_  6}^{G}(\Omega_{M_{6,s,\chi}})$} 
				\label{Tab::E7::COMP::P_6} 
			\end{longtable}
		\end{center} 
		\item 
		For $\para{P}=\para{P}_{7}$ 
		\begin{center} 
			\footnotesize 
			\begin{longtable}{|c|c|c|} 
				\hline 
				\diagbox{$s$}{$ord\bk{\chi}$}  & $1$  & $2$  \\ \hline 
				$ -\frac{17}{2} $ & $ \left[8, -\frac{25}{2}, 1\right] $ & \notableentry \\ \hline 
				$ -\frac{11}{2} $ & $ \left[8, -\frac{11}{2}, 1\right] $ & \notableentry \\ \hline 
				$ -\frac{9}{2} $ & $ \left[8, -\frac{1}{2}, 1\right] $ & \notableentry \\ \hline 
				$ -\frac{5}{2} $ & $ \left[1, -\frac{5}{2}, 1\right] $& $ \left[3, -\frac{5}{2}, 2\right] $ \\ \hline 
				$ -\frac{3}{2} $ & $ \left[1, -\frac{1}{2}, 1\right] $ & \notableentry \\ \hline 
				$ -\frac{1}{2} $ & $ \left[\left[6, 8\right], \left[-\frac{7}{2}, \frac{15}{2}\right], \left[0, 0\right]\right] $ & $ \left[\left[3, 8\right], \left[-\frac{5}{2}, \frac{3}{2}\right], \left[1, 0\right]\right] $ \\ \hline 
				
				\caption{Data for the proof of the reducibility of $\Ind_{M_  7}^{G}(\Omega_{M_{7,s,\chi}})$} 
				\label{Tab::E7::COMP::P_7}
			\end{longtable}
		\end{center} 
		\item 
		For $\para{P}=\para{P}_{8}$ 
		\begin{center} 
			\footnotesize 
			\begin{longtable}{|c|c|} 
				\hline 
				\diagbox{$s$}{$ord\bk{\chi}$}  & $1$  \\ \hline 
				$ -\frac{19}{2} $ & $ \left[1, -\frac{17}{2}, 1\right] $ \\ \hline 
				$ -\frac{11}{2} $ & $ \left[1, -\frac{13}{2}, 1\right] $ \\ \hline 
				$ -\frac{1}{2} $ & $ \left[\left[1, 8\right], \left[-\frac{11}{2}, -1\right], \left[0, 0\right]\right] $ \\ \hline 
				\caption{Data for the proof of the reducibility of $\Ind_{M_  8}^{G}(\Omega_{M_{8,s,\chi}})$} 
				\label{Tab::E7::COMP::P_8}
			\end{longtable} 
		\end{center} 
	\end{itemize} 
	
	\subsection{Irreducibility}
	\label{SubSec:ProofAlgIrr}
	
	For most irreducible non-regular $\pi=i_{M_{i}}^{G}\bk{\Omega_{M_i,s,\chi}}$, it is possible to prove irreducibility using the branching rules calculation, as explained in \Cref{Subsubsec:Irreducibility}.
	The only exceptions were those of the cases $[4, 0, 1]$ and $[4, 0, 2]$, which will be dealt with in \Cref{Sec:Exceptional_Cases}.

	\subsection{Unique Irreducible Subrepresentation}
	\label{SubSec:ProofAlgUIS}
	
	Here too, for most reducible non-regular $\pi=i_{M_{i}}^{G}\bk{\Omega_{M_i,s,\chi}}$, one can use the algorithm in \Cref{Subsubsec:UIS} to determine the length of the maximal semi-simple subrepresentation.
	There are, however, two kinds of exceptions:
	\begin{itemize}
		\item $\pi$ with unique irreducible subrepresentation which could not have been determined using the algorithm: $[2,-5/2,1]$, $[3,-3/2,1]$, $[3,-3/2,2]$, $[4,-1/2,1]$, $[4,-1/2,2]$, $[4,-3/2,1]$, $[5,-3/2,1]$, $[6,-2,1]$ and $[6,-2,2]$.
		
		\item Non-unitary $\pi$ with a maximal semi-simple subrepresentation of length $2$:
		$[1, -5/2, 1]$, $[3, -1/2, 2]$ and $[7,-3/2,1]$.
	\end{itemize}
	Also, there are the cases $\coset{2,-1/2,1}$ and $\coset{5,-1/2,1}$ where we are able to show that the maximal semi-simple subrepresentation is of length at most $2$.
	In these cases we further show that the irreducible spherical subquotient is a subrepresentation and describe the other candidate subrepresentation in terms of its Langlands data.
\end{proof}

\section{Exceptional Cases}
\label{Sec:Exceptional_Cases}

In this section, we finish the proof of \Cref{Thm:Main_theorem} by resolving the cases not solved by the algorithm of \Cref{Subsec:The_Algorithm}.

\subsection{Fully Resolved Cases}

In this subsection, we deal with all cases listed in \Cref{SubSec:ProofAlgIrr} and \Cref{SubSec:ProofAlgUIS} except for $\coset{2,-1/2,1}$ and $\coset{5,-1/2,1}$.
We organize the discussion according to similarities between the arguments used for the various cases, such as:

\begin{itemize}
	\item Using a chain of isomorphisms to embed $\pi$ in a parabolic induction that admits a unique irreducible subrepresentation in common with $\pi$.
	\item Using a chain of isomorphisms to embed $\pi$ in a parabolic induction with two irreducible subrepresentations and then calculating its intersection with $\pi$.
	\item Using $R$-groups of unitary principal series representations of Levi subgroups of $G$.
\end{itemize}

When $\chi=\Id_{F^\times}$, the representation $\pi=i_{M_i}^G\Omega_{i,s,\chi}$ admits a unique anti-dominant exponent and the \emph{Orthogonality Rule} in \cite[Appendix A.1]{SDPS_E7} implies that there exists a unique irreducible subquotient $\pi_0$ of $\pi$ such that $\lambda_{a.d.}\leq r_T^G\pi_0$.
However, when $ord\bk{\chi}>1$, this is not necessarily true as can be seen from \cite[Propositions 4.7 and 4.8]{SDPS_E7} or \Cref{Prop:cosocle_len_2_E7_P2_M1_O1} bellow, for example.
In the following lemma, we verify the existence of a unique such $\pi_0$ for certain $\pi=i_{M_i}^G\Omega_{i,s,\chi}$ with $ord\bk{\chi}=2$.
\begin{Lem}
	\label{Lem:Unique_Subquot_with_ad_exp}
	Let $\chi$ be a character of $F^\times$ of order $2$.
	In the following cases, $\coset{i,s,ord\bk{\chi}}$, there exists a unique irreducible subquotient $\pi_0$ of $\pi=i_{M_i}^G\Omega_{i,s,\chi}$ such that $r_T^G\pi_0$ contains an anti-dominant exponent $\lambda_{a.d.}$ of $\pi$:
	\[
	\coset{3,-1/2,2}, 
	\coset{3,-3/2,2},
	\coset{4,-1/2,2},
	\coset{5,-1/2,2},
	\coset{6,-2,2} .
	\]
\end{Lem}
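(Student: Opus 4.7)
The plan is to treat each of the five cases $\coset{i,s,ord\bk{\chi}}$ separately, using the general framework of geometric lemma computations combined with branching rule arguments from \Cref{Subsubsec:Irreducibility}. The driving observation is that, since $\chi^2=\Id$, distinct $W$-conjugates of the initial exponent $\lambda_0$ need not coincide even when their real parts do, so $\pi$ may carry several anti-dominant exponents. The uniqueness of $\pi_0$ is then equivalent to the assertion that any anti-dominant exponent of $\pi$ appears in the Jacquet module of a single common irreducible subquotient.

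First, for each case, I would apply the geometric lemma \Cref{Eq:gemoetric_lemma} to the pair $\bk{M_i,T}$ and compute the multiplicity function $f_\pi\bk{\lambda}=\mult{\lambda}{r_T^G \pi}$ explicitly. From it, I would enumerate the set $\set{\lambda_{a.d.}^{(1)}, \dots, \lambda_{a.d.}^{(r)}}$ of anti-dominant exponents by intersecting the support of $f_\pi$ with the anti-dominant chamber, and record the multiplicity $n_j=\mult{\lambda_{a.d.}^{(j)}}{r_T^G \pi}$ of each.

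Next, I would fix an irreducible subquotient $\pi_0$ of $\pi$ whose Jacquet module contains $\lambda_{a.d.}^{(1)}$. Starting from $f_0=\delta_{\lambda_{a.d.}^{(1)}}$, I would run the branching rule recipe of \Cref{Subsubsec:Irreducibility} to build a $\pi_0$-dominated sequence $f_0\leq f_1\leq\cdots\leq f_k\leq f_{\pi_0}$, with the goal of arranging $f_k\bk{\lambda_{a.d.}^{(j)}}=n_j$ for every $j$. Since $\pi_0$ is a subquotient of $\pi$, the reverse inequality $f_{\pi_0}\bk{\lambda_{a.d.}^{(j)}}\leq n_j$ always holds, so equality forces every anti-dominant exponent to appear in $r_T^G \pi_0$ with full multiplicity. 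Consequently, no other subquotient of $\pi$ can have any $\lambda_{a.d.}^{(j)}$ in its Jacquet module, which yields the uniqueness of $\pi_0$ as claimed.

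The main obstacle I anticipate is the design of the branching rule sequence when the $\lambda_{a.d.}^{(j)}$ lie far apart in the $W$-orbit. At each step, the choice of $\bk{\lambda',L,\tau}$ must satisfy the uniqueness condition on $\coset{r_T^L \tau}$ and still route enough multiplicity into every $\lambda_{a.d.}^{(j)}$ without overshooting $f_\pi$. This will likely demand several intermediate hops through Levi subgroups of intermediate rank, supported by the database of \Cref{App:Database}. For the cases $\coset{3,-3/2,2}$ and $\coset{6,-2,2}$, where the exponent list is the longest among the five, I expect the branching graph to be the most intricate, and if a direct branching argument fails to reach some $\lambda_{a.d.}^{(j)}$ with full multiplicity, I would fall back on embedding $\pi$ via a chain of standard intertwining isomorphisms into an auxiliary induced representation whose socle has already been resolved in \Cref{SubSec:ProofAlgUIS} or in the references \cite{SDPS_E6,SDPS_E7}.
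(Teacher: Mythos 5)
Your overall skeleton (bound $f_{\pi_0}$ from below by a branching-rule sequence started at $\delta_{\lambda_{a.d.}}$ until it exhausts $mult\bk{\lambda_{a.d.},r_T^G\pi}$, then conclude no second subquotient can contain $\lambda_{a.d.}$) is logically sound, but it is essentially the generic algorithm of \Cref{Subsubsec:UIS}, and for precisely these five triples that computation is inconclusive --- which is why they appear among the exceptional cases in the first place. The concrete obstruction is the order-$2$ character: in each case the anti-dominant exponent has $\inner{\lambda_{a.d.},\check{\alpha}}=\chi$ (not $0$) at most of the simple roots of $\Theta\bk{\lambda_{a.d.}}=\set{\alpha\mvert \Real\inner{\lambda_{a.d.},\check{\alpha}}=0}$. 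The Orthogonality Rule and the $A_n$/$D_n$ rules in \Cref{App:Database} require the pairings to be literally $0$ or $\pm1$, so they only see the much smaller Levi where the pairing vanishes exactly (e.g.\ for $\coset{3,-3/2,2}$ only $\alpha_8$ survives, giving a lower bound of $2$ against a target stabilizer of order $8$). You therefore cannot route enough multiplicity back into $\lambda_{a.d.}$ with the available rules, and your fallback --- embedding into representations ``whose socle has already been resolved in \Cref{SubSec:ProofAlgUIS}'' --- is circular, since those resolutions (e.g.\ \Cref{Prop:Unique_Irr_Subs_Jac_and_Ind}) themselves invoke this lemma to define $\pi_0$ when $ord\bk{\chi}=2$.

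The paper's proof is structural rather than computational. Let $L$ be the Levi attached to $\Theta\bk{\lambda_{a.d.}}$; it is of type $A_{n_1}\times\dots\times A_{n_l}$, so $L^{der}$ is a product of $SL_{n_k}$'s and the length of $\bk{i_T^L\lambda_{a.d.}}\res{L^{der}}$ can be read off from Zelevinsky's classification. In three of the cases this restriction is irreducible; in the remaining two it has length $2$ but the two pieces are permuted/distinguished by the action of $T$, so $i_T^L\lambda_{a.d.}$ is irreducible in all five cases. Since $\Real\inner{\lambda_{a.d.},\check{\alpha}}<0$ off $\Theta\bk{\lambda_{a.d.}}$, Langlands' theorem applied to $i_T^G\lambda_{a.d.}\cong i_L^G\bk{i_T^L\lambda_{a.d.}}$ gives a unique irreducible subrepresentation $\pi_0$ of multiplicity one, and a central character argument shows every subquotient containing $\lambda_{a.d.}$ in its Jacquet module embeds into $i_T^G\lambda_{a.d.}$, hence equals $\pi_0$. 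If you want to salvage your approach you would first need to establish, for the relevant Levi $L$ and the exponent $\lambda_{a.d.}$ with $\chi$-valued pairings, a new branching rule asserting uniqueness of the irreducible $L$-representation containing $\lambda_{a.d.}$ --- but proving that rule is exactly the $SL_n$-restriction argument above, so the structural step cannot be avoided.
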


\begin{proof}
	We argue by a similar argument to that of \cite[Proposition 4.7(3)]{SDPS_E7}.
	We do this by first studying the restricition $\bk{i_T^L\lambda_{a.d.}}\res{L^{der}}$ of $i_T^L\lambda_{a.d.}$ to $L^{der}$, where $L^{der}$ is the derived group of the Levi subgroup $L$ associated with
	\[
	\Theta\bk{\lambda_{a.d.}} = \set{\alpha\in\Delta \mvert \Real\bk{\inner{\lambda_{a.d.},\check{\alpha}}} = 0} .
	\]
	Since $G$ is simply-connected and simply-laced and $L$ is of type $A_{n_1}\times...\times A_{n_l}$, it follows that in these cases
	\[
	L^{der} = \prodl_{k=1}^l SL_{n_k}\bk{F}, \quad n_k\in\N.
	\]
	Hence, the length of $\bk{i_T^L\lambda_{a.d.}}\res{L^{der}}$ can be determined from \cite{MR620252}.
	In the following table, we list one anti-dimnant exponent $\lambda_{a.d.}$, the set $\Theta\bk{\lambda_{a.d.}}$ and $len\bk{\bk{i_T^L\lambda_{a.d.}}\res{L^{der}}}$ for each of these cases:
	\begin{center}
		\footnotesize
		\begin{longtable}[H]{|c|c|c|c|c|}
			\hline
			$\coset{i,s,ord\bk{\chi}}$ & $\lambda_{a.d.}$ & $\Theta\bk{\lambda_{a.d.}}$ & $L^{der}$ & $len\bk{\bk{i_T^L\lambda_{a.d.}}\res{L^{der}}}$ \\ \hline
			$\coset{3,-1/2,2}$ & $\eeightchar{\chi}{\chi}{\chi}{\chi}{-1}{\chi}{\chi}{-1}$ & $\set{1,2,3,4,6,7}$ & $SL_5\times SL_3$ & $1$ \\ \hline
			$\coset{3,-3/2,2}$ & $\eeightchar{\chi}{\chi}{\chi}{-1}{\chi}{\chi}{-1}{0}$ & $\set{1,2,3,5,6,8}$ & $SL_3\times SL_3\times SL_2\times SL_2$ & $2$ \\ \hline
			$\coset{4,-1/2,2}$ & $\eeightchar{\chi}{\chi}{\chi}{\chi}{-1}{\chi}{\chi}{0}$ & $\set{1,2,3,4,6,7,8}$ & $SL_5\times SL_4$ & $1$ \\ \hline
			$\coset{5,-1/2,2}$ & $\eeightchar{\chi}{\chi}{\chi}{\chi}{-1}{\chi}{\chi}{\chi}$ & $\set{1,2,3,4,6,7,8}$ & $SL_5\times SL_4$ & $1$ \\ \hline
			$\coset{6,-2,2}$ & $\eeightchar{\chi}{\chi}{\chi}{-1}{0}{\chi}{-1}{-1}$ & $\set{1,2,3,5,6}$ & $SL_3\times SL_3\times SL_2$ & $2$ \\ \hline
		\caption{Data for the proof of \Cref{Lem:Unique_Subquot_with_ad_exp}.}
		\label{Table:Data_Lem:Unique_Subquot_with_ad_exp}
		\end{longtable}
	\end{center}
	In the cases $\coset{3,-1/2,2}$, $\coset{4,-1/2,2}$ and $\coset{5,-1/2,2}$ the representation $\bk{i_T^L\lambda_{a.d.}}\res{L^{der}}$ is irreducible and hence, so is $i_T^L\lambda_{a.d.}$.
	In the other two cases, $\coset{3,-3/2,2}$ and $\coset{6,-2,2}$, the representation $\bk{i_T^L\lambda_{a.d.}}\res{L^{der}}$ has length $2$.
	Since $L$ is generated by $L^{der}$ and $T$, the reducibility of $i_T^L\lambda_{a.d.}$ is determined by the action of $T$ on the pieces of $\bk{i_T^L\lambda_{a.d.}}\res{L^{der}}$.
	Indeed, it follow that $i_T^L\lambda_{a.d.}$ is irreducible (see \cite[Proposition 4.7(3)]{SDPS_E7} for more details).

	We note that
	\[
	\Real\bk{\inner{\lambda,\check{\alpha}}} < 0 \quad \forall \alpha\notin\Delta_\lambda .
	\]
	Hence, by Langlands' unique irreducible subrepresentation theorem (see \cite[Section 1]{MR1665057}),
	\[
	i_T^G \lambda_{a.d.} \cong i_L^G\bk{i_T^L\lambda_{a.d.}}
	\]
	admits a unique irreducible subrepresentation $\pi_0$ which appears in $i_T^G\lambda_{a.d.}$ with multiplicity $1$.
	We now argue that
	\[
	mult\bk{\lambda_{a.d.},r_T^G \pi_0} = mult\bk{\lambda_{a.d.},r_T^G \bk{i_T^G\lambda_{a.d.}}} .
	\]	
	Assume to the contrary that there exists a subquotient $\tau\neq\pi_0$ of $i_T^G\lambda_{a.d.}$ such that $\lambda_{a.d.}\leq r_T^G\bk\tau$.
	By a central character argument, see \cite[Lemma. 3.12]{SDPS_E6}, it holds that
	$\tau\hookrightarrow i_T^G\lambda_{a.d.}$.
	Since $\pi_0$ is the unique irreducible subrepresentation of $i_T^G\lambda_{a.d.}$, it follows that $\tau\cong\pi_0$, contradicting the multiplicity $1$ property of $\pi_0$.
\end{proof}

\begin{Remark}
	If one chooses a different anti-dominant element $\lambda_{a.d.}'$ in the orbit of $\lambda_{a.d.}$ in \Cref{Table:Data_Lem:Unique_Subquot_with_ad_exp}, they would have the same real part.
	Thus $\Theta\subset \Delta$ and $L^{der}$ are invariant under the choice of representative of the orbit.
	Further more, $\bk{i_T^L\lambda_{a.d.}'}\res{L^{der}}$ is isomorphic to $\bk{i_T^L\lambda_{a.d.}}\res{L^{der}}$
\end{Remark}


\begin{Prop}
	In the following cases, $\coset{i,s,ord\bk{\chi}}$, the representation $\pi=i_{M_i}^G\Omega_{i,s,\chi}$ admits a unique irreducible subrepresentation:
	\[
	\coset{3,-3/2,2}, \coset{6,-2,2}
	\]
\end{Prop}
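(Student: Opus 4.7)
The plan is to identify the socle of $\pi$ with the candidate irreducible subquotient $\pi_0$ singled out in \Cref{Lem:Unique_Subquot_with_ad_exp}. Recall that $\pi_0$ is realized there as the Langlands subrepresentation of $i_T^G\lambda_{a.d.}$ and is the unique irreducible subquotient of $\pi$ whose Jacquet module contains $\lambda_{a.d.}$.

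First, I would show $\pi_0\hookrightarrow\pi$. By Frobenius reciprocity, $\Hom_G\bk{\pi_0,\pi}\cong\Hom_{M_i}\bk{r_{M_i}^G\pi_0,\Omega_{M_i,s,\chi}}$, so it suffices to exhibit $\Omega_{M_i,s,\chi}$ as a quotient of $r_{M_i}^G\pi_0$. Using the embedding $\pi_0\hookrightarrow i_T^G\lambda_{a.d.}$, the geometric lemma (\Cref{Eq:gemoetric_lemma}) filters $r_{M_i}^Gi_T^G\lambda_{a.d.}$ by pieces indexed by $W^{T,M_i}$, one of which carries $\Omega_{M_i,s,\chi}$ as a quotient, namely the piece indexed by the Weyl element sending $\lambda_{a.d.}$ into the $W_{M_i}$-orbit of the initial exponent $\lambda_0=r_T^{M_i}\Omega_{M_i,s,\chi}$. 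The task is to verify that this contribution descends to $r_{M_i}^G\pi_0$.

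To prove uniqueness, I would invoke the multiplicity criterion of \Cref{Subsubsec:UIS}(2): it suffices to check $\mult{\lambda_0}{r_T^G\pi_0}=\mult{\lambda_0}{r_T^G\pi}$. The right-hand side is directly available from the SageMath implementation. For the left-hand side one needs only a lower bound, which can be supplied by a branching rule calculation (\Cref{Subsubsec:Irreducibility}) seeded with $\delta_{\lambda_{a.d.}}$ rather than $\delta_{\lambda_0}$, thereby exploiting \Cref{Lem:Unique_Subquot_with_ad_exp} from the outset.

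The principal obstacle lies in this second step. The algorithm of \Cref{Subsec:The_Algorithm} failed in the cases $\coset{3,-3/2,2}$ and $\coset{6,-2,2}$ precisely because a naive branching rule did not certify $\mult{\lambda_0}{r_T^G\pi_0}$ at the required value. To close the gap, one leverages the key feature flagged by \Cref{Lem:Unique_Subquot_with_ad_exp}: although $\bk{i_T^L\lambda_{a.d.}}\res{L^{der}}$ has length two, the full induction $i_T^L\lambda_{a.d.}$ is irreducible, so the entire $W_L$-orbit of $\lambda_{a.d.}$ must appear in $r_T^L\pi_0$. Seeding the branching rule with this whole orbit, and propagating it through the inductive step from $L$ to $G$, should recover the required equality.
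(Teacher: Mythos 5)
Your proposal follows essentially the same route as the paper: invoke \Cref{Lem:Unique_Subquot_with_ad_exp} to single out the unique subquotient $\pi_0$ containing $\lambda_{a.d.}$, run a branching rule calculation seeded at $\lambda_{a.d.}$ with its full stabilizer multiplicity $\Card{\Stab_W\bk{\lambda_{a.d.}}}$, and conclude via the multiplicity criterion $mult\bk{\lambda_0,r_T^G\pi_0}=mult\bk{\lambda_0,r_T^G\pi}=2$ from \Cref{Subsubsec:UIS}. Your preliminary step exhibiting $\pi_0\hookrightarrow\pi$ via Frobenius reciprocity is superfluous (the socle is nonempty and every irreducible subrepresentation must contain $\lambda_0$, so the multiplicity count alone forces it to be $\pi_0$), but this does not affect correctness.
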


\begin{proof}
	Let $\lambda_{a.d.}$ be an anti-dominant exponent of $\pi$ and $\pi_0$ be the unique irreducible subquotient of $\pi$ such that $\lambda_{a.d.}\leq r_T^G\pi_0$ as in \Cref{Table:Data_Lem:Unique_Subquot_with_ad_exp} and \Cref{Lem:Unique_Subquot_with_ad_exp}.
	One checks, using a branching rule calculation, that in these cases
	\[
	mult\bk{\lambda_{a.d.},r_T^G\pi_0} = \Card{\Stab_W\bk{\lambda_{a.d.}}} \quad \Rightarrow \quad
	mult\bk{\lambda_0,r_T^G\pi_0} = mult\bk{\lambda_0,r_T^G\pi} = 2.
	\]
	From which the claim follows.
	Here,
	\[
	\Card{\Stab_W\bk{\lambda_{a.d.}}} = \piece{8, & \coset{i,s,ord\bk{\chi}}=\coset{3,-3/2,2} \\ 4,& \coset{i,s,ord\bk{\chi}}=\coset{6,-2,2} .}
	\]
\end{proof}

In the following, we make a repeated use of a certain inclusion argument.
It is thus convenient to state it in general at this point.
For that matter, let $\Omega_0=\Omega_{i,s,\chi}$ and $\pi=i_{M_i}^G\Omega_0$.
For $j\neq i$ let $M_{i,j}=M_i\cap M_j$ and $\Omega_1=r_{M_{i,j}}^{M_i}\Omega_0$.
By Frobenius reciprocity,
\[
\Omega_0\hookrightarrow i_{M_{i,j}}^{M_i}\Omega_1 .
\]
and hence, by induction in stages,
\begin{equation}
\label{Eq:Jacquet_and_Inclusion}
\pi=i_{M_i}^G\Omega_0 \hookrightarrow i_{M_i}^G\bk{i_{M_{i,j}}^{M_i}\Omega_1} \cong i_{M_{i,j}}^G \Omega_1 \cong i_{M_j}^G\bk{i_{M_{i,j}}^{M_j}\Omega_1} .
\end{equation}

%

\begin{Prop}
	\label{Prop:Unique_Irr_Subs_Jac_and_Ind}
	In the following cases, $\coset{i,s,ord\bk{\chi}}$, the representation $\pi=i_{M_i}^G\Omega_{i,s,\chi}$ admits a unique irreducible subrepresentation:
	\[
	\coset{2,-5/2,1},
	\coset{3,-3/2,1},
	\coset{4,-1/2,1},
	\coset{4,-3/2,1},
	\coset{5,-1/2,2}
	\coset{5,-3/2,1},
	\coset{6,-2,1} .
	\]
\end{Prop}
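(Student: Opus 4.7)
The plan is to exploit the embedding described in \Cref{Eq:Jacquet_and_Inclusion} for each of the seven cases. For each triple $\coset{i,s,ord\bk{\chi}}$ in the list, I would search for an index $j\neq i$ such that the Jacquet restriction $\Omega_1=r_{M_{i,j}}^{M_i}\Omega_{i,s,\chi}$ yields a character of $M_{i,j}$ for which the induced representation $\sigma_j:=i_{M_{i,j}}^{M_j}\Omega_1$ on the smaller group $M_j$ has a controllable socle (ideally a single irreducible subrepresentation, or at worst two). Induction in stages then gives an embedding $\pi\hookrightarrow i_{M_j}^G\sigma_j$, so the socle of $\pi$ must inject into the socle of $i_{M_j}^G\sigma_j$, and the task reduces to computing the latter on a Levi subgroup of much smaller rank.

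The key step for most cases is to verify that $\sigma_j$ itself admits a unique irreducible subrepresentation, which one checks using the algorithm of \Cref{Subsec:The_Algorithm} applied inside $M_j$ (since $M_j$ is of classical or smaller exceptional type, the relevant degenerate principal series are well documented, e.g.\ via the $GSpin_{14}$, $GL_n$, $GE_6$, $GE_7$ and $GSpin_{10}\times GL_3$ Levi structures from the list in \Cref{Subsec:GroupData}). Once $\sigma_j$ is shown to have a unique irreducible subrepresentation $\sigma_j^0$, we obtain $i_{M_j}^G\sigma_j^0\hookrightarrow i_{M_j}^G\sigma_j$; then checking via Frobenius reciprocity (combined with the geometric lemma \Cref{Eq:gemoetric_lemma} applied to $r_{M_i}^G i_{M_j}^G\sigma_j^0$) that this subrepresentation actually meets the image of $\pi$, and is the \emph{only} irreducible common constituent which can lie in the socle of $\pi$, identifies the unique irreducible subrepresentation of $\pi$. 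For each case I would record an explicit choice of $j$ and the corresponding $\Omega_1$ in a small table, following the bookkeeping style of \Cref{Tab::E7::COMP::P_1}--\Cref{Tab::E7::COMP::P_8}.

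The main obstacle will be cases in which no choice of $j$ makes $\sigma_j$ have a unique irreducible subrepresentation; then $\sigma_j$ will have socle of length $2$, and one must argue that the intersection of $\pi$ with the socle of $i_{M_j}^G\sigma_j$ is still of length $1$. The tool for this is the central character / multiplicity-one argument reproduced in the proof of \Cref{Lem:Unique_Subquot_with_ad_exp}: any irreducible subrepresentation $\tau$ of $\pi$ must contribute the initial exponent $\lambda_0=r_T^{M_i}\Omega_{i,s,\chi}$ to $r_T^G\tau$, and by a central character argument $\tau$ is forced to embed into $i_T^G\lambda_0$ with multiplicity bounded by $mult\bk{\lambda_0,r_T^G\pi}$. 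A branching rule calculation, performed as in \Cref{Subsubsec:Irreducibility}, then shows that this multiplicity is already saturated by a single candidate $\pi_0$, forcing uniqueness. For the specific cases $\coset{2,-5/2,1}$ and $\coset{6,-2,1}$ (which are the most delicate, since the Levi has $E_7$- or $GSpin_{10}$-type factors with their own reducibility phenomena), I would likely need to iterate the embedding, going through two intermediate parabolics $M_{i,j_1}$ and $M_{i,j_1,j_2}$, in order to arrive at a Levi where the relevant induction has an unambiguous socle.
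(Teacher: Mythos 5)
Your starting point — the embedding $\pi\hookrightarrow i_{M_j}^G\bk{i_{M_{i,j}}^{M_j}\Omega_1}$ from \Cref{Eq:Jacquet_and_Inclusion} — is the same as the paper's, but the mechanism you propose for extracting uniqueness of the socle from it has a gap. Knowing that $\sigma_j=i_{M_{i,j}}^{M_j}\Omega_1$ has a unique irreducible subrepresentation (or even that it is irreducible) does \emph{not} reduce the problem to a computation on the Levi: the socle of $i_{M_j}^G\sigma_j$ is a $G$-level object, and a full induction from an irreducible representation of $M_j$ can perfectly well have a socle of length greater than one. So the step ``the task reduces to computing the latter on a Levi subgroup of much smaller rank'' does not go through. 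The paper instead requires $\sigma_j$ to be \emph{irreducible}, picks an $M_j$-anti-dominant exponent $\lambda_1$ of $\sigma_j$, and uses the central character argument to push all the way down to the torus: $\pi\hookrightarrow i_{M_j}^G\sigma_j\hookrightarrow i_T^G\lambda_1$. Then Frobenius reciprocity forces every irreducible subrepresentation of $\pi$ to contain $\lambda_1$ in its Jacquet module, and a branching rule calculation showing $mult\bk{\lambda_1,r_T^G\pi}=mult\bk{\lambda_1,r_T^G\pi_0}$ (together with the multiplicity-one statement at $\lambda_{a.d.}$) pins down $\pi_0$ as the only candidate.

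Your fallback argument compounds the problem: you propose to saturate the multiplicity of the \emph{initial} exponent $\lambda_0$ by a branching rule calculation. But the seven cases in this proposition are listed in \Cref{SubSec:ProofAlgUIS} precisely because that test (item (2) of \Cref{Subsubsec:UIS}, i.e.\ \Cref{Eq:Mult_condition_UIS} at $\lambda_0$) was inconclusive for them; rerunning it cannot close the argument. The essential idea you are missing is that one should run the multiplicity comparison at the auxiliary exponent $\lambda_1$ coming from the irreducible $\sigma_j$ — an exponent at which $\pi$ still embeds into a full principal series but where the branching rules are strong enough to be decisive. Your suggestion to iterate through two intermediate parabolics for $\coset{2,-5/2,1}$ and $\coset{6,-2,1}$ is also not what the paper does here (a single $j$ suffices in each case; the iterated chains of isomorphisms appear only in later propositions).
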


\begin{proof}
	Since the proof is similar for all cases, we start by outlining it and conclude by listing the relevant data for each of the cases in \Cref{Table:Data_Prop:Unique_Irr_Subs_Jac_and_Ind}.
	
	For $\coset{i,s,ord\bk{\chi}}$ as above, let $\lambda_0=r_T^G\Omega_{i,s,\chi}$.
	For $j\neq i$ as in \Cref{Table:Data_Prop:Unique_Irr_Subs_Jac_and_Ind}, let $M_{i,j}=M_i\cap M_j$ and $\Omega_1=r_{M_{i,j}}^{M_i}\Omega_0$.
	It holds that
	\[
	\lambda_0=r_T^{M_i}\Omega=r_T^{M_{i,j}}\Omega_1.
	\]
	Assume that $i_{M_{i,j}}^{M_j}\Omega_1$ is an irreducible representation of $M_j$, this can be verified by a branching rule calculation in $M_j$ or by referring to previous knowledge on the representation theory of $M_j$ as explained bellow in \Cref{Remark:Irreducibility_of_representations_of_Levi}.
	Let $\lambda_1$ be an $M_j$-anti-dominant exponent of $i_{M_{i,j}}^{M_j}\Omega_1$, that is $\lambda_1\leq r_T^{M_j}\bk{i_{M_{i,j}}^{M_j}\Omega_1}$ and
	\begin{equation}
	\label{Eq:M_ij_anti_dominant}
	\Real\bk{\inner{\lambda_1,\check{\alpha_l}}} \leq 0 \quad \forall l\neq j .
	\end{equation}
	By a central character argument, see \cite[Lemma. 3.12]{SDPS_E6}, it holds that
	\[
	i_{M_{i,j}}^{M_j}\Omega_1 \hookrightarrow i_T^{M_j}\lambda_1 .
	\]
	Hence, by \Cref{Eq:Jacquet_and_Inclusion} and induction in stages,
	\[
	\pi \hookrightarrow i_{M_j}^G\bk{i_{M_{i,j}}^{M_j}\Omega_1} \hookrightarrow i_T^G\lambda_1 .
	\]
	By Frobenius reciprocity, any irreducible subrepresentation $\tau$ of $i_T^G\lambda_1$ must satisfy
	\[
	\lambda_1\leq r_T^G\tau .
	\]
	
	On the other hand, let $\lambda_{a.d.}$ be an anti-dominant exponent of $\pi$ and let $\pi_0$ denote the unique irreducible subquotient of $\pi$ such that $\lambda_{a.d.}\leq r_T^G\pi_0$.
	If $ord\bk{\chi}=1$ the existence of $\pi_0$ is automatic, and if $ord\bk{\chi}=2$, it follows from \Cref{Lem:Unique_Subquot_with_ad_exp}.
	
	In each of these cases, a branching rule calculation implies that
	\[
	mult\bk{\lambda_1,r_T^G\pi} = mult\bk{\lambda_1,r_T^G\pi_0} .
	\]
	It follows that $\pi_0$ is the unique irreducible subquotient of $\pi$ which admits $\lambda_1$ as an exponent.
	In other words, $\pi_0$ is the unique irreducible subquotient of $\pi$ such that
	\[
	\pi_0\hookrightarrow i_T^G\lambda_1 .
	\]
	On the other hand, since
	\[
	mult\bk{\lambda_{a.d.},r_T^G\pi} 
	= mult\bk{\lambda_{a.d.},r_T^G i_T^G\lambda_1}
	= mult\bk{\lambda_{a.d.},r_T^G\pi_0} ,	
	\]
	it follows that $\pi_0$ appears in $i_T^G\lambda_1$, and in $\pi$, with multiplicity $1$.
	We conclude that $\pi_0$ is the unique irreducible subrepresentation of $\pi$.
	
	\begin{center}
		\footnotesize
		\begin{longtable}[H]{|c|c|c|c|c|c|c|}
			\hline
			$i$ & $s$ & $ord\bk{\chi}$ & $j$ & $\lambda_0$ & $\lambda_1$ & $\lambda_{a.d.}$ \\ \hline 
			$2$ & $-\frac{5}{2}$ & $1$ & $1$ & $\eeightchar{-1}{5}{-1}{-1}{-1}{-1}{-1}{-1}$ & $\eeightchar{2}{-1}{-1}{0}{-1}{0}{-1}{0}$ & $\eeightchar{-1}{0}{0}{-1}{0}{0}{-1}{0}$
			\\ \hline 
			$3$ & $-\frac{3}{2}$ & $1$ & $7$ & $\eeightchar{-1}{-1}{4}{-1}{-1}{-1}{-1}{-1}$ & $\eeightchar{0}{0}{0}{-1}{0}{-1}{1}{-1}$ & $\eeightchar{0}{0}{0}{-1}{0}{0}{-1}{0}$ \\ \hline 
			$4$ & $-\frac{1}{2}$ & $1$ & $8$ & $\eeightchar{-1}{-1}{-1}{3}{-1}{-1}{-1}{-1}$ &
			$\eeightchar{0}{0}{0}{0}{-1}{0}{0}{0}$ & $\eeightchar{0}{0}{0}{0}{-1}{0}{0}{0}$ \\ \hline 
			$4$ & $-\frac{3}{2}$ & $1$ & $6$ & $\eeightchar{-1}{-1}{-1}{2}{-1}{-1}{-1}{-1}$ & $\eeightchar{0}{0}{0}{-1}{0}{0}{-1}{-1}$ & $\eeightchar{0}{0}{0}{-1}{0}{0}{-1}{-1}$ \\ \hline
			$5$ & $-\frac{1}{2}$ & $2$ & $8$ & $\eeightchar{-1}{-1}{-1}{-1}{4+\chi}{-1}{-1}{-1}$ & $\eeightchar{\chi}{\chi}{\chi}{-1+\chi}{\chi}{\chi}{\chi}{1+\chi}$ & $\eeightchar{\chi}{\chi}{\chi}{\chi}{-1}{\chi}{\chi}{\chi}$
			\\ \hline
			$5$ & $-\frac{3}{2}$ & $1$ & $7$ & $\eeightchar{-1}{-1}{-1}{-1}{3}{-1}{-1}{-1}$ & $\eeightchar{0}{0}{0}{-1}{0}{-1}{1}{-1}$ & $\eeightchar{0}{0}{0}{-1}{0}{0}{-1}{0}$ \\ \hline 
			$6$ & $-2$ & $1$ & $8$ & $\eeightchar{-1}{-1}{-1}{-1}{-1}{4}{-1}{-1}$ & $\eeightchar{0}{0}{0}{-1}{0}{-1}{-1}{2}$ & $\eeightchar{0}{0}{0}{-1}{0}{0}{-1}{-1}$ \\ \hline 
		\caption{Data for the proof of \Cref{Prop:Unique_Irr_Subs_Jac_and_Ind}.}
		\label{Table:Data_Prop:Unique_Irr_Subs_Jac_and_Ind}
		\end{longtable}
	\end{center}
	For the case $\coset{5,-1/2,2}$, $\chi$ denotes in \Cref{Table:Data_Prop:Unique_Irr_Subs_Jac_and_Ind} a character of $F^\times$ of order $2$.
	
\end{proof}

\begin{Remark}
	\label{Remark:Irreducibility_of_representations_of_Levi}
	The above argument, as well as other arguments bellow, used the fact that $\sigma=i_{M_{i,j}}^{M_j}\Omega_1$ is an irreducible representation of $M_j$.
	One way to prove the irreducibility of $\sigma$ is using branching rule calculations but it can also be inferred from previous works.
	We wish to allow for a common reference for following arguments and thus we treat all values of $j$ and not only $1$, $6$, $7$ and $8$.
	
	More precisely, $\sigma$ is a degenerate principal series of $M_j$ and if its restriction $\sigma\res{M_j^{der}}$ to $M_j^{der}$ is irreducible, then so is $\sigma$.
	For different values of $j$, one can find the list of reducible and irreducible degenerate principal series of $M_j^{der}$ in the following sources:
	\begin{itemize}
		\item \cite{MR2017065} for $j=1$.
		\item \cite{MR1141803} for $j=2,3,4,5$
		\item \cite{MR2017065} and \cite{MR1141803} for $j=6$.
		\item \cite{SDPS_E6,SDPS_E7} for $j=7,8$.
	\end{itemize}
	It should be noted here that \cite{MR2017065} deals only with representations of orthogonal groups, while $M_1^{der}=Spin_{14}$ and $M_6^{der}=Spin_{10}\times SL_2$.
	That is, in order to prove irreducibility, one should also take the isogeny map into account, using  \cite{MR1141803}.

\end{Remark}

A slight modification of the argument made in \Cref{Prop:Unique_Irr_Subs_Jac_and_Ind} can be used to prove:
\begin{Prop}
	The representation $\pi=i_{M_i}^G\Omega_{4,-\frac{1}{2},\chi}$, where $\chi$ has order $2$, admits a unique irreducible subrepresentation.
\end{Prop}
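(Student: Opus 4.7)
The plan is to adapt the proof of \Cref{Prop:Unique_Irr_Subs_Jac_and_Ind} to the order $2$ setting, using \Cref{Lem:Unique_Subquot_with_ad_exp} to supply the piece of information that is automatic when $\chi$ is trivial. First, I would invoke entry $\coset{4,-1/2,2}$ of \Cref{Table:Data_Lem:Unique_Subquot_with_ad_exp} in \Cref{Lem:Unique_Subquot_with_ad_exp} to produce the unique irreducible subquotient $\pi_0$ of $\pi$ such that a chosen anti-dominant exponent $\lambda_{a.d.}$ of $\pi$ satisfies $\lambda_{a.d.}\leq r_T^G\pi_0$. The relevant fact here is that $L^{der}\cong SL_5\times SL_4$ and the restriction $\bk{i_T^L\lambda_{a.d.}}\res{L^{der}}$ has length $1$, so $i_T^L\lambda_{a.d.}$ is irreducible and Langlands' subrepresentation theorem pins down $\pi_0$ uniquely.

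Next, mirroring \Cref{Prop:Unique_Irr_Subs_Jac_and_Ind}, I would take $j=8$ and set $M_{4,8}=M_4\cap M_8$, $\Omega_1=r_{M_{4,8}}^{M_4}\Omega_{4,-1/2,\chi}$, so that by \Cref{Eq:Jacquet_and_Inclusion} we have
\[
\pi \hookrightarrow i_{M_8}^G\bk{i_{M_{4,8}}^{M_8}\Omega_1}.
\]
The key input is the irreducibility of the $M_8$-representation $i_{M_{4,8}}^{M_8}\Omega_1$, where $M_8\cong GE_7\bk{F}$; this can be verified by a branching rule calculation inside $M_8$, or alternatively by consulting the $E_7$ classification in \cite{SDPS_E7} as per \Cref{Remark:Irreducibility_of_representations_of_Levi}. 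Granting this, choose an $M_8$-anti-dominant exponent $\lambda_1$ of $i_{M_{4,8}}^{M_8}\Omega_1$. A central character argument via \cite[Lemma 3.12]{SDPS_E6} gives $i_{M_{4,8}}^{M_8}\Omega_1\hookrightarrow i_T^{M_8}\lambda_1$, whence by induction in stages $\pi\hookrightarrow i_T^G\lambda_1$.

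Finally, a branching rule calculation in $G$ starting from $\delta_{\lambda_1}$ (using the database of \Cref{Subsubsec:Irreducibility}) should establish
\[
mult\bk{\lambda_1,r_T^G\pi_0}=mult\bk{\lambda_1,r_T^G\pi}.
\]
Since every irreducible subrepresentation $\tau$ of $i_T^G\lambda_1$ must satisfy $\lambda_1\leq r_T^G\tau$ by Frobenius reciprocity, this equality forces $\pi_0$ to be the only irreducible subquotient of $\pi$ that can embed into $i_T^G\lambda_1$, and hence the only one that can embed into $\pi$ itself; combined with the multiplicity $1$ of $\lambda_1$ in the relevant component, it pinpoints $\pi_0$ as the unique irreducible subrepresentation of $\pi$.

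The main obstacle I anticipate is the final branching rule calculation: in the order $2$ situation the stabilizer $\Stab_W\bk{\lambda_{a.d.}}$ is non-trivial, so the multiplicities in $r_T^G\pi$ are larger than in the analogous order $1$ case, and one must carefully verify that the chain of dominated functions built from $\delta_{\lambda_1}$ actually saturates $mult\bk{\lambda_1,r_T^G\pi}$ rather than merely bounding it from below. This is the step where the extra care compared with \Cref{Prop:Unique_Irr_Subs_Jac_and_Ind} is concentrated.
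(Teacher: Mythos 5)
Your overall architecture is the same as the paper's: invoke \Cref{Lem:Unique_Subquot_with_ad_exp} (case $\coset{4,-1/2,2}$) to pin down $\pi_0$, embed $\pi$ into $i_T^G\lambda_1$ through an intermediate maximal Levi, and close with a multiplicity count at $\lambda_1$. The gap is in your choice of route. Taking $j=8$ forces you to prove that $i_{M_{4,8}}^{M_8}\Omega_1$ is irreducible, and this is not a routine input: restricting to the derived group $M_8^{der}$ of type $E_7$, one computes $\langle\rho_{M_4^{E_7}},\check{\alpha_4}\rangle=-3$ while $\langle\lambda_0,\check{\alpha_4}\rangle=3+\chi$, so $i_{M_{4,8}}^{M_8}\Omega_1$ is exactly the \emph{unitary} degenerate principal series of $E_7$ attached to $P_4$ and the ramified quadratic character $\chi$ (the case $\coset{4,0,2}$ of $E_7$). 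Unitary points with quadratic twists are precisely where branching rule calculations are least decisive (compare \Cref{Prop:Unitary_cases}, where the analogous $E_8$ statement needs a full separate argument), and this representation is not among the irreducible $E_7$ series catalogued in \Cref{App:Database}. If it is reducible, your central character step $i_{M_{4,8}}^{M_8}\Omega_1\hookrightarrow i_T^{M_8}\lambda_1$ collapses, since \cite[Lemma 3.12]{SDPS_E6} is applied to an irreducible representation of the intermediate Levi; at best you have reduced the proposition to an unproven irreducibility statement of comparable difficulty.

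The paper's proof is the ``slight modification'' designed to avoid exactly this. It first passes to $M_3$, where $i_{M_{3,4}}^{M_3}\Omega_1$ is a $GL_2\times GL_7$-type degenerate principal series whose irreducibility is immediate from the $A_n$ theory; it then uses the invert isomorphism inside the $GL_7$-factor to rewrite this as $i_{M_{3,7}}^{M_3}\Omega_2$, and only then inducts up through $M_7$. There the relevant representation of $GE_6\times GL_2$ restricts on the derived group to the $E_6$ series $\coset{3,-1/2,1}$ — known to be irreducible and listed in the database — with the quadratic character $\chi$ pushed onto the $\alpha_7$-direction, outside $M_7^{der}$. Only after this detour does one get $\pi\hookrightarrow i_T^G\lambda_1$ with $\lambda_1=\eeightchar{-1}{0}{0}{-1}{0}{0}{2+\chi}{-1}$ and conclude via $mult\bk{\lambda_1,r_T^G\pi}=mult\bk{\lambda_1,r_T^G\pi_0}=48$. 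To repair your argument you would either need to supply a proof that the $E_7$ case $\coset{4,0,2}$ is irreducible, or replace the $M_8$ route by a chain of the form \Cref{Eq:Sequence_of_isomorphisms_G_level} terminating at a Levi whose induced representation is known to be irreducible.
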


\begin{proof}
	Let $\Omega_0=\Omega_{4,-\frac{1}{2},\chi}$ and let
	\[
	\lambda_0 = r_T^{M_4}\Omega_0 = \eeightchar{-1}{-1}{-1}{3+\chi}{-1}{-1}{-1}{-1} .
	\]
	We fix an anti-dominant exponent
	\[
	\lambda_{a.d.} = \eeightchar{\chi}{\chi}{\chi}{\chi}{-1}{\chi}{\chi}{0}
	\]
	of $\pi$.
	Let $\Omega_1=r_{M_{3,4}}^{M_4}\Omega_0$.
	By \Cref{Eq:Jacquet_and_Inclusion},
	\[
	\pi \hookrightarrow i_{M_3}^G \bk{i_{M_{3,4}}^{M_3}\Omega_1} .
	\]
	The representation $i_{M_{3,4}}^{M_3}\Omega_1$ of $M_3$ is an irreducible degenerate principal series of $M_3$.
	This can be verified by a branching rule calculation in $M_7$ or by a similar argument to that of \Cref{Remark:Irreducibility_of_representations_of_Levi}. 
	Furthermore, it holds that $i_{M_{3,4}}^{M_3}\Omega_1 \cong i_{M_{3,7}}^{M_3}\Omega_2$, 
	where
	\[
	r_T^{M_{3,7}}\Omega_2 = \eeightchar{-1}{-1}{3}{-1}{-1}{-1}{2+\chi}{-1} 
	\]
	and $i_{M_{3,7}}^{M_3}\Omega_2$ is the invert of the irreducible $i_{M_{3,4}}^{M_3}\Omega_1$.
	It follows, by induction in stages, that
	\begin{equation}
		\label{Eq:Injection_In_proof_of_UIS_4_1/2_2}
		\pi \hookrightarrow i_{M_3}^G \bk{i_{M_{3,7}}^{M_3}\Omega_2} \cong  i_{M_7}^G \bk{i_{M_{3,7}}^{M_7}\Omega_2}.
	\end{equation}
	The $M_7$-anti-dominant exponent of $i_{M_{3,7}}^{M_7}\Omega_2$ (see \Cref{Eq:M_ij_anti_dominant}) is given by
	\[
	\lambda_1=\eeightchar{-1}{0}{0}{-1}{0}{0}{2+\chi}{-1} .
	\]
	By a central character argument, see \cite[Lemma. 3.12]{SDPS_E6},
	\[
	i_{M_{3,7}}^{M_7}\Omega_1 \hookrightarrow i_T^{M_7}\lambda_1 .
	\]
	and hence
	\[
	\pi\hookrightarrow i_T^G\lambda_1 .
	\]
	On the other hand, for the unique irreducible subrepresentation $\pi_0$ of $\pi$ such that $\lambda_{a.d.}\leq r_T^G\pi_0$ as in \Cref{Lem:Unique_Subquot_with_ad_exp}, a branching rule calculation shows that
	\[
	mult\bk{\lambda_1,r_T^G\pi} 
	= mult\bk{\lambda_1,r_T^G\pi_0} = 48.
	\]
	Arguing as in \Cref{Prop:Unique_Irr_Subs_Jac_and_Ind}, it follows that $\pi$ admits a unique irreducible subrepresentation.	
\end{proof}

\begin{Prop}
	\label{Prop:Unitary_cases}
	The representation $\pi=i_{M_4}^G\Omega_{4,0,\chi}$, where $\chi^2=1$, is irreducible.
\end{Prop}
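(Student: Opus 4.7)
The plan is to reduce to showing that $\pi$ has no nontrivial direct-sum decomposition, by invoking unitarity together with \cite[Lemma 5.2]{MR1951440}, and then to eliminate such a decomposition via an embedding and branching rule calculation analogous to \Cref{Prop:Unique_Irr_Subs_Jac_and_Ind}.

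Since $s=0$ and $\chi^2=1$, the character $\Omega_{4,0,\chi}$ of $M_4$ takes values in the unit circle and is therefore unitary. Consequently $\pi=i_{M_4}^G\Omega_{4,0,\chi}$ is unitary, hence semi-simple, and by \cite[Lemma 5.2]{MR1951440} of length at most $2$. It therefore suffices to rule out the length-two case.

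My strategy is as follows. First I would isolate the unique irreducible subquotient $\pi_0\subseteq\pi$ whose Jacquet module contains the initial exponent $\lambda_0=r_T^{M_4}\Omega_{4,0,\chi}$; this is the spherical subrepresentation in the case $\chi=\Id_{F^\times}$, and the analogous distinguished summand when $ord(\chi)=2$. Uniqueness is automatic since $\lambda_0$ appears in $r_T^G\pi$ with multiplicity one. Next, in the spirit of \Cref{Prop:Unique_Irr_Subs_Jac_and_Ind}, I would build an embedding $\pi\hookrightarrow i_{M_j}^G\sigma$ for an appropriate $j\neq 4$, with $\sigma$ an irreducible representation of $M_j$ (irreducibility verified through the database described in \Cref{Remark:Irreducibility_of_representations_of_Levi}); tracking the image of $\pi_0$ through this embedding and then through the further inclusion $i_{M_{i,j}}^{M_j}\Omega_1 \hookrightarrow i_T^{M_j}\lambda_1$, a branching rule calculation at a strategically chosen exponent $\mu$ should yield the equality $mult(\mu,r_T^G\pi_0)=mult(\mu,r_T^G\pi)$. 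Combined with the semi-simplicity of $\pi$ and the uniqueness of $\pi_0$, this would force $\pi_1=0$ in any hypothetical decomposition $\pi=\pi_0\oplus\pi_1$, completing the proof.

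The main obstacle is that the straightforward irreducibility test of \Cref{SubSec:ProofAlgIrr} already failed for the cases $[4,0,1]$ and $[4,0,2]$, meaning $f_{\pi_0}<f_\pi$ for the naive choice of $\pi_0$. The unitary structure must therefore be used more actively than in the standard algorithm. The most promising refinements are either a comparison of the $K$- or Iwahori-isotypic components of $\pi$ and $\pi_0$ (using that for a unitary $\pi$ these must decompose compatibly with any direct-sum splitting), or a direct calculation of the Plancherel measure zeros at $\Omega_{4,0,\chi}$ to access an $R$-group argument. Identifying the correct auxiliary parabolic $M_j$ and the testing exponent $\mu$ for which the branching rule calculation becomes decisive is the principal technical challenge.
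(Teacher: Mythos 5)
Your high-level skeleton agrees with the paper's: unitarity makes $\pi$ semi-simple, so it suffices to show that $\pi$ has a unique irreducible subrepresentation, and the paper does this precisely in the spirit of \Cref{Prop:Unique_Irr_Subs_Jac_and_Ind}. However, there are two genuine gaps. First, your identification of $\pi_0$ rests on the claim that the initial exponent $\lambda_0$ appears in $r_T^G\pi$ with multiplicity one. That cannot be right: if it were, item (1) of \Cref{Subsubsec:UIS} combined with semi-simplicity would already give irreducibility and $[4,0,\chi]$ would not be an exceptional case at all. In fact $mult\bk{\lambda_0,r_T^G\pi}>1$ here, which is exactly why the standard test fails. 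The paper instead isolates $\pi_0$ as the unique irreducible subquotient whose Jacquet module contains the anti-dominant exponent $\lambda_{a.d.}=\eeightchar{0}{0}{-\frac{1}{2}+\chi}{0}{0}{-\frac{1}{2}+\chi}{0}{0}$, uniqueness coming from the orthogonality lemma \cite[Lemma A.1]{SDPS_E7}, not from the initial exponent.

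Second, the step you defer as ``the principal technical challenge'' is the entire content of the proof, and your single-embedding plan $\pi\hookrightarrow i_{M_j}^G\sigma$ is not how it is resolved. The paper builds a chain of isomorphisms (\Cref{Eq:Sequence_of_isomorphisms_unitary_case}, \Cref{Table:Data_Prop:Unitary_cases}) passing through $M_2$, $M_6$, $M_5$ and finally $M_8$, at each stage replacing an irreducible degenerate principal series of the Levi by its invert so that induction in stages can be applied again. Only after landing in $i_{M_8}^G\bk{i_{M_{5,8}}^{M_8}\Omega_4}$ and passing, via the central character argument, to $i_T^G\lambda_1$ with $\lambda_1=\eeightchar{0}{0}{0}{-1}{0}{0}{0}{\frac{5}{2}+\chi}$ does a branching rule calculation become decisive: $mult\bk{\lambda_1,r_T^G\pi_0}=288=mult\bk{\lambda_1,r_T^G\pi}$, so $\pi_0$ is the unique irreducible subrepresentation and semi-simplicity finishes the argument. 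Neither of your proposed refinements (Iwahori-isotypic comparison or Plancherel/$R$-group computations) is used or needed; what is missing from your proposal is the explicit chain of Levi subgroups and the testing exponent, without which the argument does not close.
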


\begin{proof}
	Let $\Omega_0=\Omega_{4,0,\chi}$ and let $\Omega_1=r_{M_{2,4}}^{M_4}\Omega_1$.
	By \Cref{Eq:Jacquet_and_Inclusion},
	\[
	\pi \hookrightarrow i_{M_2}^G \bk{i_{M_{2,4}}^{M_2}\Omega_1} .
	\]
	In what follows, we consider a sequence of isomorphisms, similar to the one in \Cref{Eq:Injection_In_proof_of_UIS_4_1/2_2}.
	That is, we wish to write a sequence of isomorphisms:
	\begin{equation}
		\label{Eq:Sequence_of_isomorphisms_G_level}
		i_{M_{i_k}}^G\bk{i_{M_{i_k,j_k}}^{M_{i_k}}\Omega_k} \cong	i_{M_{j_{k+1}}}^G\bk{i_{M_{i_{k+1},j_{k+1}}}^{M_{j_{k+1}}}\Omega_{k+1}} \cong
		i_{M_{i_{k+1}}}^G\bk{i_{M_{i_{k+1},j_{k+1}}}^{M_{i_{k+1}}}\Omega_{k+1}} ,
	\end{equation}
	where
	\begin{itemize}
		\item $j_{k+1}=i_k$.
		\item $\Omega_k$ is a $1$-dimensional representation of $M_{i_k,j_k}$.
		\item $\Omega_{k+1}$ is a $1$-dimensional representation of $M_{i_k,j_{k+1}}$.
		\item $i_{M_{i_k,j_k}}^{M_{i_k}}\Omega_k$ is an irreducible representation of $M_{i_k}$.
		\item $i_{M_{j_{k+1},i_{k+1}}}^{M_{j_{k+1}}}\Omega_{k+1}$ is the invert representation of $i_{M_{i_k,j_k}}^{M_{i_k}}\Omega_k$.
	\end{itemize}
	
	The sequence in \Cref{Eq:Sequence_of_isomorphisms_G_level} relies on an isomorphism
	\begin{equation}
		\label{Eq:Sequence_of_isomorphisms}
		i_{M_{i_k,j_k}}^{M_{i_k}}\Omega_k \cong i_{M_{j_{k+1},i_{k+1}}}^{M_{j_{k+1}}}\Omega_{k+1}
	\end{equation}
	of irreducible representations for each $k$.
	The irreducibility of each of the $i_{M_{i_k,j_k}}^{M_{i_k}}\Omega_k$ follows from a branching rule calculation in $M_{i_k}$ and it can also be inferred from the references in \Cref{Remark:Irreducibility_of_representations_of_Levi}.
	
	We summarize this data for the sequence of isomorphisms in the following table
	\begin{center}
		\footnotesize
		\begin{longtable}{|c||c|c|c|c|c|}\hline 
			$k$ & $i_k$ & $j_k$ & $r_T^{M_{i_k.j_k}}\Omega_k$ & $i_{k+1}$ & $r_T^{M_{j_{k+1},i_{k+1}}}\Omega_{k+1}$ \\ \hline
			$1$ & $2$ & $4$ & $\eeightchar{-1}{-1}{-1}{\frac{7}{2}+\chi}{-1}{-1}{-1}{-1}$ & $6$ & $\eeightchar{-1}{\frac{7}{2}+\chi}{-1}{-1}{-1}{\frac{5}{2}+\chi}{-1}{-1}$ \\ \hline
			$2$ & $6$ & $2$ & $\eeightchar{-1}{\frac{7}{2}+\chi}{-1}{-1}{-1}{\frac{5}{2}+\chi}{-1}{-1}$ & $5$ & $\eeightchar{-1}{-1}{-1}{-1}{\frac{5}{2}+\chi}{\frac{3}{2}+\chi}{-1}{-1}$ \\ \hline
			$3$ & $5$ & $6$ & $\eeightchar{-1}{-1}{-1}{-1}{\frac{5}{2}+\chi}{\frac{3}{2}+\chi}{-1}{-1}$ & $8$ & $\eeightchar{-1}{-1}{-1}{-1}{4}{-1}{-1}{\frac{1}{2}+\chi}$ \\ \hline
		\caption{Data for the proof of \Cref{Prop:Unitary_cases}.}
		\label{Table:Data_Prop:Unitary_cases}
		\end{longtable}
	\end{center}
	
	Namely, the following sequence of isomorphisms hold
	\begin{equation}
		\label{Eq:Sequence_of_isomorphisms_unitary_case}
		\begin{array}{l}
			i_{M_2}^G\bk{i_{M_{2,4}}^{M_2}\Omega_1}
			\cong i_{M_2}^G\bk{i_{M_{2,6}}^{M_2}\Omega_1} 
			\cong i_{M_6}^G\bk{i_{M_{2,6}}^{M_6}\Omega_1} \\
			\cong i_{M_6}^G\bk{i_{M_{5,6}}^{M_6}\Omega_1}
			\cong i_{M_5}^G\bk{i_{M_{5,6}}^{M_5}\Omega_1}
			\cong i_{M_8}^G\bk{i_{M_{5,8}}^{M_8}\Omega_4}.
		\end{array}
	\end{equation}
	In each step we use either induction in stages or invert a representation.
	That is:
	\begin{itemize}
		\item $i_{M_2}^G\bk{i_{M_{2,4}}^{M_2}\Omega_1}
		\cong i_{M_2}^G\bk{i_{M_{2,6}}^{M_2}\Omega_1}$ since $i_{M_{2,4}}^{M_2}\Omega_1$ and $i_{M_{2,6}}^{M_2}\Omega_1$ are two irreducible degenerate principal series representations of $M_2$ which invert to each other.
		
		\item $i_{M_2}^G\bk{i_{M_{2,6}}^{M_2}\Omega_1} 
		\cong i_{M_6}^G\bk{i_{M_{2,6}}^{M_6}\Omega_1}$ by induction in stages.
		\item etc.
	\end{itemize}

	It follows that
	\[
	\pi \hookrightarrow i_{M_8}^G\bk{i_{M_{5,8}}^{M_8}\Omega_4} .
	\]
	Let
	\[
	\lambda_1 = \eeightchar{0}{0}{0}{-1}{0}{0}{0}{\frac{5}{2}+\chi}
	\]
	be the $M_8$-anti-dominant exponent of $i_{M_8}^G\bk{i_{M_{5,8}}^{M_8}\Omega_4}$.
	By a central character argument, see \cite[Lemma. 3.12]{SDPS_E6}, it follows that
	\[
	\pi \hookrightarrow i_{M_8}^G\bk{i_{M_{5,8}}^{M_8}\Omega_4} \hookrightarrow i_T^G\lambda_1 .
	\]
	Let
	\[
	\lambda_{a.d.} = \eeightchar{0}{0}{-\frac{1}{2}+\chi}{0}{0}{-\frac{1}{2}+\chi}{0}{0}
	\]
	denote the anti-dominant exponent of $\pi$ and let $\pi_0$ denote the unique (due to \cite[Lemma A.1]{SDPS_E7}) subquotient of $\pi$ such that $\lambda_{a.d.}\leq r_T^G\pi_0$.
	A branching rule calculation yields that
	\[
	mult\bk{\lambda_1,r_T^G\pi_0} = 288 = mult\bk{\lambda_1,r_T^G\pi}
	\]
	Hence, $\pi_0$ is the unique subquotient of $\pi$ which is a subrepresentation of $i_T^G\lambda_1$.
	Thus, $\pi_0$ is the unique irreducible subrepresentation of $\pi$.
	Since $\pi$ is semi-simple, it follows that it is irreducible.
\end{proof}

\begin{Prop}
	\label{Prop:cosocle_len_2_E7_P2_M1_O1}
	In the following cases, $\coset{i,s,ord\bk{\chi}}$, the maximal semi-simple subrepresentation  of $\pi=i_{M_i}^G\Omega_{i,s,\chi}$ has length $2$:
	\[
	\coset{1,-5/2,1}, \coset{3,-1/2,2} .
	\]
\end{Prop}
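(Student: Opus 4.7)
The plan is twofold: to exhibit two non-isomorphic irreducible subrepresentations $\pi_0,\pi_1$ of $\pi$, and to cap the length of the maximal semi-simple subrepresentation at $2$. For the upper bound, observe that $\pi = i_{M_i}^G\Omega_0 \hookrightarrow i_T^G\mu_0$ for some $M_i$-anti-dominant $\mu_0$ in the $W_{M_i}$-orbit of $\lambda_0 = r_T^{M_i}\Omega_0$, via $\Omega_0 \hookrightarrow i_T^{M_i}\mu_0$ composed with induction in stages. Any irreducible subrepresentation $\tau\hookrightarrow\pi$ then satisfies $\mu_0 \le r_T^G\tau$ by Frobenius reciprocity, so the socle length of $\pi$ is bounded above by $\mathrm{mult}(\mu_0, r_T^G\pi)$. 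A geometric-lemma computation should show this equals $2$ in both cases.

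For the existence of two subrepresentations, the two cases will require different strategies. In $[3,-1/2,2]$, since $\mathrm{ord}(\chi)=2$, the set of globally anti-dominant exponents of $\pi$ should split into at least two distinct $W$-orbits, the orbit exhibited in \Cref{Lem:Unique_Subquot_with_ad_exp} being only one of them. Applying the method of that lemma to each orbit produces two non-isomorphic irreducible subquotients $\pi_0,\pi_1$, each characterized as the unique subquotient whose Jacquet module contains the corresponding anti-dominant orbit. I plan to promote each $\pi_k$ to a subrepresentation by mimicking \Cref{Prop:Unique_Irr_Subs_Jac_and_Ind}: construct $\pi \hookrightarrow i_T^G\mu^{(k)}$ via a chain of maximal-Levi inductions that uses both induction in stages and inversions of intermediate irreducible degenerate principal series of the Levis, then verify $\mathrm{mult}(\mu^{(k)}, r_T^G\pi) = \mathrm{mult}(\mu^{(k)}, r_T^G\pi_k)$ by a branching rule calculation. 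Combined with a central character argument (\cite[Lemma 3.12]{SDPS_E6}), this will force $\pi_k \hookrightarrow \pi$.

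For $[1,-5/2,1]$ with trivial $\chi$, there is a unique globally anti-dominant exponent $\lambda_{a.d.}$, and the orthogonality rule (\cite[Appendix A.1]{SDPS_E7}) supplies exactly one subquotient $\pi_0$ containing it in its Jacquet module; this $\pi_0$ embeds in $\pi$ by the standard method. The second subrepresentation $\pi_1$ must therefore satisfy $\lambda_{a.d.}\not\le r_T^G\pi_1$, so it cannot be isolated via any globally anti-dominant exponent. I plan to identify $\pi_1$ as a common irreducible subquotient of $\pi$ and the reducibility companion $\pi' = i_{M_3}^G\Omega_{3,-5/2,\mathbf{1}}$ from \Cref{Tab::E7::COMP::P_1}, with distinctness from $\pi_0$ guaranteed by the uniqueness in the orthogonality rule. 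To promote $\pi_1$ to a subrepresentation, I will locate an exponent $\mu \in r_T^G\pi_1$ along which an embedding $\pi \hookrightarrow i_T^G\mu$ can be built via a chain-of-inductions argument and for which $\mathrm{mult}(\mu, r_T^G\pi) = \mathrm{mult}(\mu, r_T^G\pi_1)$. The main obstacle is exactly this final step for $[1,-5/2,1]$: finding the right $\mu$ and executing the critical branching rule calculation in $E_8$ without the guidance of a globally anti-dominant exponent to pin down the candidate Langlands-type data for $\pi_1$.
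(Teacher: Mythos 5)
Your upper bound is sound and coincides with the paper's: every irreducible subrepresentation $\tau$ of $\pi=i_{M_i}^G\Omega_0$ satisfies $\lambda_0\leq r_T^G\tau$ by Frobenius reciprocity, and $mult\bk{\lambda_0,r_T^G\pi}=2$ in both cases, so the socle has length at most $2$. The genuine gap is in producing the \emph{second} irreducible subrepresentation, and you have in effect conceded it: for $\coset{1,-5/2,1}$ you state that the final step (finding an exponent $\mu$ with the right multiplicity identity for the non-spherical candidate $\pi_1$) is an unresolved obstacle, and for $\coset{3,-1/2,2}$ your plan rests on the unverified assertion that the anti-dominant exponents split into several families each detecting its own subquotient, followed by branching-rule computations of precisely the kind that are inconclusive for these two cases (which is why they sit in the exceptional-cases section at all). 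Note also that all exponents of $\pi$ lie in a single $W$-orbit, being $W^{M_i,T}$-translates of $\lambda_0$, so ``two distinct $W$-orbits of anti-dominant exponents'' cannot literally occur; at best there are several anti-dominant representatives inside one orbit, and \Cref{Lem:Unique_Subquot_with_ad_exp} already shows that the one exhibited there is detected by a \emph{unique} subquotient.

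The missing idea is to import the socle from a Levi subgroup rather than hunt for it exponent by exponent. In both cases the paper builds a chain of inclusions and isomorphisms (as in \Cref{Eq:Sequence_of_isomorphisms_G_level}) ending with
\[
\pi \hookrightarrow i_{M_{2,8}}^G\Omega_3 \cong i_{M_8}^G\bk{i_{M_{2,8}}^{M_8}\Omega_3},
\]
where $i_{M_{2,8}}^{M_8}\Omega_3$ is the degenerate principal series of $M_8\cong GE_7$ corresponding to the case $\coset{2,-1,1}$ of $E_7$, already known from \cite{SDPS_E7} to have a maximal semi-simple subrepresentation of length $2$. Exactness of induction then forces a semi-simple subrepresentation of length at least $2$ in $i_{M_{2,8}}^G\Omega_3$; the count $mult\bk{\lambda_1,r_T^G\bk{i_{M_{2,8}}^G\Omega_3}}=mult\bk{\lambda_1,r_T^G\pi}=2$, with $\lambda_1=r_T^{M_{2,8}}\Omega_3$, shows both that the socle of the ambient representation has length exactly $2$ and that neither irreducible summand can inject into the quotient $i_{M_{2,8}}^G\Omega_3/\pi$ (whose Jacquet module no longer contains $\lambda_1$), so both land inside $\pi$. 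Without some such transfer mechanism your outline does not close either case.
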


\begin{proof}
	We prove the two cases using a similar argument and so we deal with them simultaneously and point out where the arguments diverge.
	Let $\Omega_0=\Omega_{i,s,\chi}$ and $\pi=i_{M_i}^G\Omega_0$.
	We point out that
	\[
	mult\bk{\lambda_0,r_T^G\pi} = 2 ,
	\]
	where $\lambda_0=r_T^{M_i}\Omega_0$.
	Hence, $\pi$ admits a maximal semi-simple subrepresentation of length at most $2$.
	
%
%
%
%
%
	
	
	For each case, we write a similar sequence of isomorphisms as in \Cref{Eq:Sequence_of_isomorphisms_G_level} (see also \Cref{Eq:Sequence_of_isomorphisms_unitary_case} and \Cref{Remark:Irreducibility_of_representations_of_Levi}), summarized in the following tables (analog to \Cref{Table:Data_Prop:Unitary_cases}):
	\begin{itemize}
		
		\item Fix a character $\chi$ of order $2$.
		In the case $\coset{3,-1/2,2}$:
		\begin{center}
			\footnotesize
			\begin{longtable}{|c||c|c|c|c|c|}\hline 
				$k$ & $i_k$ & $j_k$ & $r_T^{M_{i_k.j_k}}\Omega_k$ & $i_{k+1}$ & $r_T^{M_{j_{k+1},i_{k+1}}}\Omega_{k+1}$ \\ \hline
				$1$ & $1$ & $3$ & $\eeightchar{-1}{-1}{5+\chi}{-1}{-1}{-1}{-1}{-1}$ & $2$ & $\eeightchar{2+\chi}{5+\chi}{-1}{-1}{-1}{-1}{-1}{-1}$ \\ \hline
				$2$ & $2$ & $1$ & $\eeightchar{2+\chi}{5+\chi}{-1}{-1}{-1}{-1}{-1}{-1}$ & $8$ &
				$\eeightchar{-1}{5}{-1}{-1}{-1}{-1}{-1}{4+\chi}$ \\ \hline
				\caption{Data for the proof of \Cref{Prop:cosocle_len_2_E7_P2_M1_O1} in the case $\coset{3,-1/2,2}$.}
				\label{Table:Data_Prop:cosocle_len_2_P3_M1_2_O2}
			\end{longtable}
		\end{center}
		\item In the case $\coset{1,-5/2,1}$:
		\begin{center}
			\footnotesize
			\begin{longtable}{|c||c|c|c|c|c|}\hline 
				$k$ & $i_k$ & $j_k$ & $r_T^{M_{i_k.j_k}}\Omega_k$ & $i_{k+1}$ & $r_T^{M_{j_{k+1},i_{k+1}}}\Omega_{k+1}$ \\ \hline
				$1$ & $2$ & $1$ & $\eeightchar{8}{-1}{-1}{-1}{-1}{-1}{-1}{-1}$ & $8$ & $\eeightchar{-1}{5}{-1}{-1}{-1}{-1}{-1}{-2}$ \\ \hline
				\caption{Data for the proof of \Cref{Prop:cosocle_len_2_E7_P2_M1_O1} in the case $\coset{1,-5/2,1}$.}
				\label{Table:Data_Prop:cosocle_len_2_P1_M5_2_O1}
			\end{longtable}
		\end{center}
		In order to match notations with the previous case, we write $\Omega_3=\Omega_2$ for this case.
	\end{itemize}
	
	
	The conclusion of the above discussion is that in both cases it holds that
	\[
	\pi \hookrightarrow i_{M_{2,8}}^G\Omega_3 \cong i_{M_8}^G \bk{i_{M_{2,8}}^{M_8}\Omega_3} .
	\]
	The representation $i_{M_2,8}^{M_8}\Omega_3$ is reducible and admits a maximal semi-simple subrepresentation of length $2$ (this is the case denoted by $\coset{2,-1,1}$ in \cite{SDPS_E7}).
	It follows that $i_{M_2,8}^G\Omega_3$ admits a maximal semi-simple subrepresentation of length at least $2$.
	
	On the other hand,
	\[
	mult\bk{\lambda_1,r_T^G\bk{i_{M_{2,8}}^G\Omega_3}} = mult\bk{\lambda_1,r_T^G\pi} = 2,
	\]
	where $\lambda_1=r_T^{M_{2,8}}\Omega_3$.
	Hence, $i_{M_{2,8}}^G\Omega_3$ admits a maximal semi-simple subrepresentation of length precisely $2$ and both of these subrepresentation intersect $\pi$, from which the claim follows.	
\end{proof}

The following is a slight variation of the previous argument.

\begin{Prop}
	\label{Prop:cosocle_len_2_D7_P5_0_1}
	The representation $\pi=i_{M_7}^G\Omega_{7,-3/2}$ admits a maximal semi-simple subrepresentation of length $2$.
\end{Prop}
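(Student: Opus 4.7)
The strategy will closely parallel the proof of Proposition \ref{Prop:cosocle_len_2_E7_P2_M1_O1}. First, I would perform a branching rule calculation to establish the upper bound $mult\bk{\lambda_0, r_T^G \pi} \leq 2$, where $\lambda_0 = r_T^{M_7}\Omega_0$ and $\Omega_0 = \Omega_{7,-3/2,triv}$. Together with \Cref{Subsubsec:UIS}, this bounds the length of the maximal semi-simple subrepresentation of $\pi$ by $2$. (The value should be exactly $2$, since the case is flagged in \Cref{Thm:Main_theorem}(2)(a).)

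Next, the plan is to construct a chain of isomorphisms of the shape used in \Cref{Eq:Sequence_of_isomorphisms_G_level} and \Cref{Eq:Sequence_of_isomorphisms_unitary_case}, embedding $\pi$ successively through induction in stages and through inverting irreducible degenerate principal series on maximal Levi subgroups. The targeted endpoint is an embedding
\[
\pi \hookrightarrow i_{M_{i,j}}^G \Omega \cong i_{M_j}^G \bk{i_{M_{i,j}}^{M_j}\Omega},
\]
where $i_{M_{i,j}}^{M_j}\Omega$ is a reducible degenerate principal series on a maximal Levi of $G$ whose maximal semi-simple subrepresentation has length exactly $2$, as recorded in \cite{SDPS_E7}. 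The irreducibility of each intermediate inducing datum is verified either by a branching rule calculation inside the relevant Levi or by appeal to the references listed in \Cref{Remark:Irreducibility_of_representations_of_Levi}. As with the preceding proposition, each step of the chain consists either of induction in stages or of replacing an irreducible induced datum by its invert.

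Finally, I would compare multiplicities of an appropriately chosen exponent $\lambda_1 = r_T^{M_{i,j}}\Omega$. By construction, $i_{M_{i,j}}^G\Omega$ admits a maximal semi-simple subrepresentation of length at least $2$, coming from the two known irreducible subrepresentations of $i_{M_{i,j}}^{M_j}\Omega$. A branching rule calculation should yield
\[
mult\bk{\lambda_1, r_T^G\bk{i_{M_{i,j}}^G\Omega}} = mult\bk{\lambda_1, r_T^G\pi},
\]
which forces both irreducible subrepresentations of $i_{M_{i,j}}^G\Omega$ to intersect $\pi$ non-trivially, yielding the matching lower bound of $2$.

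The main obstacle is pinpointing the correct chain of maximal Levi subgroups to traverse: among the many possible sequences of invert--and--induct moves starting from $\coset{7,-3/2,triv}$, only a few will land at a previously classified length-$2$ case from \cite{SDPS_E7}, and identifying the right intermediate Levi $M_{i,j}$ (likely involving $M_1$ or $M_8$, given the neighbour structure of $\alpha_7$ in the Dynkin diagram of $E_8$) together with verifying all intermediate irreducibilities is where the concrete computational work lies. The multiplicity comparison at the end should then be a routine branching rule calculation, as in the proof of \Cref{Prop:cosocle_len_2_E7_P2_M1_O1}.
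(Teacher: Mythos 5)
Your proposal follows essentially the same route as the paper's proof: bound the socle length by $2$ via $mult\bk{\lambda_0,r_T^G\pi}=2$, embed $\pi$ through a chain of invert--and--induct isomorphisms (the paper's chain passes through $M_3$ and $M_4$ and terminates in $i_{M_1}^G\bk{i_{M_{1,4}}^{M_1}\Omega_3}$, where the inducing datum is a reducible \emph{unitary}, hence semi-simple length-$2$, degenerate principal series of $M_1$), and then compare multiplicities of $\lambda_1=r_T^{M_{1,4}}\Omega_3$ to force both irreducible subrepresentations to meet $\pi$. The only point you leave open --- pinpointing the chain and the terminal Levi (it is $M_1$, not $M_8$, and the length-$2$ decomposition there comes from unitarity and the orthogonal-group references of \Cref{Remark:Irreducibility_of_representations_of_Levi} rather than from \cite{SDPS_E7}) --- is exactly the computational content the paper supplies in its table, so the approach is the same.
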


\begin{proof}
	Let $\Omega_0=\Omega_{7,-3/2}$ and $\lambda_0=r_T^G\Omega_0$.
	We note that
	\[
	mult\bk{\lambda_0,r_T^G\pi} = 2 .
	\]
	Hence, $\pi$ admits a maximal semi-simple subrepresentation of length at most $2$.
	
	We write a similar sequence of isomorphisms to that of \Cref{Eq:Sequence_of_isomorphisms_G_level} (see also \Cref{Eq:Sequence_of_isomorphisms_unitary_case} and \Cref{Remark:Irreducibility_of_representations_of_Levi}), summarized in the following table (analog to \Cref{Table:Data_Prop:Unitary_cases}):
	\begin{center}
		\footnotesize
		\begin{longtable}{|c||c|c|c|c|c|}\hline 
			$k$ & $i_k$ & $j_k$ & $r_T^{M_{i_k.j_k}}\Omega_k$ & $i_{k+1}$ & $r_T^{M_{j_{k+1},i_{k+1}}}\Omega_{k+1}$ \\ \hline
			$1$ & $3$ & $7$ & $\eeightchar{-1}{-1}{-1}{-1}{-1}{-1}{7}{-1}$ & $4$ & $\eeightchar{-1}{-1}{5}{-2}{-1}{-1}{-1}{-1}$ \\ \hline
			$2$ & $4$ & $3$ & $\eeightchar{-1}{-1}{5}{-2}{-1}{-1}{-1}{-1}$ & $1$ & $\eeightchar{-4}{-1}{-1}{3}{-1}{-1}{-1}{-1}$ \\ \hline
			\caption{Data for the proof of \Cref{Prop:cosocle_len_2_D7_P5_0_1}.}
			\label{Table:Data_Prop:cosocle_len_2_P7_M3_2_O1}
		\end{longtable}
	\end{center}
	It follows that
	\[
	\pi\hookrightarrow i_{M_1}^G\bk{i_{M_{1,4}}^{M_1}\Omega_3} .
	\]
	Since $i_{M_{1,4}}^{M_1}\Omega_3$ is a reducible unitary degenerate principal series of $M_1$, it follows that $i_{M_{1,4}}^{M_1}\Omega_3=\sigma_0\oplus\sigma_1$, with $\lambda_{a.d.}\leq r_T^{M_1}\sigma_0$ and $\lambda_{a.d.}\nleq r_T^{M_1}\sigma_1$, where
	\[
	\lambda_{a.d.} = \eeightchar{-1}{0}{0}{-1}{0}{0}{-1}{0} .
	\]
	On the other hand, $\lambda_1\leq r_T^{M_1}\sigma_0, r_T^{M_1}\sigma_1$, where $\lambda_1=r_T^{M_{1,4}}\Omega_3$.
	Using \Cref{Eq:gemoetric_lemma}, one checks that
	\[
	mult\bk{\lambda_1,r_T^G i_{M_{1,4}}^G\Omega_3} =
	mult\bk{\lambda_1,r_T^G \pi} = 2 .
	\]
	On the other hand,
	\[
	mult\bk{\lambda_1,r_T^G i_{M_1}^G\sigma_0} =
	mult\bk{\lambda_1,r_T^G i_{M_1}^G\sigma_1} = 1.
	\]
	It follows that $i_{M_1}^G\sigma_0$ and $i_{M_1}^G\sigma_1$ each contains a unique irreducible subrepresentation, $\pi_0$ and $\pi_1$, both of which intersect $\pi$.
	Thus,
	\[
	\pi_0\oplus \pi_1 \hookrightarrow \pi .
	\]
	
\end{proof}

\subsection{Unresolved Cases}
\label{Subsec:Unresolved_Cases}

We finish with two cases we were unable to fully resolve, these are the cases of $\pi=i_{M_2}^G\Omega_{2,-\frac{1}{2}}$ and $\pi=i_{M_5}^G\Omega_{5,-\frac{1}{2}}$.
In both cases we show that $\pi$ admits a maximal semi-simple subrepresentation of length $1$ or $2$ and that the spherical subquotient of $\pi$ is a subrepresentation.
We were, however, unable to determine the precise length of the socle.
We do, however, outline computational methods to determine this in the future, when stronger computers are readily available.

Let $\pi=i_{M_i}^G\bk{\Omega_{M_i,s,\chi}}$ with $s<0$.
Also, let $\lambda_{a.d.}$ be an anti-dominant exponent of $\pi$ and let $\pi_0$ be an irreducible subquotient of $\pi$ such that $\lambda_{a.d.}\leq r_T^G\pi_0$.
It seems to be the case that $\pi_0$ is a subrepresentation of $\pi$ automatically.
However, we were unable to find a general proof for that.
While this usually follow from a simple branching rule calculation, this line of argument was insufficient in the following case and the claim requires a more circumvent proof.

\begin{Lem}
	\label{Lem:P2S1_2O1_spherical_is_sub}
	Let $\pi=i_{M_2}^G\Omega_{2,-\frac{1}{2}}$ and let $\pi_0$ denote the unique irreducible subquotient of $\pi$ such that $\lambda_{a.d.}\leq r_T^G\pi_0$, where
	\[
	\lambda_{a.d.} = \eeightchar{0}{0}{0}{-1}{0}{0}{0}{-1} .
	\]
	Then, $\pi_0\hookrightarrow\pi$.
\end{Lem}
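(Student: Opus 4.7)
The plan is to adapt the chain-of-isomorphisms argument used in \Cref{Prop:Unitary_cases} and \Cref{Prop:cosocle_len_2_E7_P2_M1_O1} to produce an embedding $\pi\hookrightarrow i_T^G\lambda_1$ into a minimal principal series, and then to pin down $\pi_0$ as the unique irreducible subquotient of $\pi$ whose Jacquet module admits $\lambda_1$ with the correct multiplicity. Once $\pi_0$ is shown to be the only candidate for an irreducible subrepresentation of $\pi$, the nonvanishing of the socle of $\pi$ forces $\pi_0\hookrightarrow\pi$.

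Concretely, I would first apply \Cref{Eq:Jacquet_and_Inclusion} with a carefully chosen $j\neq 2$ to obtain $\pi\hookrightarrow i_{M_j}^G\bk{i_{M_{2,j}}^{M_j}\Omega_1}$, where $\Omega_1=r_{M_{2,j}}^{M_2}\Omega_0$. Then, following the bookkeeping used in \Cref{Table:Data_Prop:Unitary_cases}, I would iteratively replace the inner induced representation by its invert along a sequence of Levi subgroups, verifying irreducibility at each stage either by a branching rule calculation on the relevant Levi or by invoking the classification results cited in \Cref{Remark:Irreducibility_of_representations_of_Levi}. The chain is arranged to terminate at $i_{M_k}^G\tau$ for which a central character argument (as in \cite[Lemma 3.12]{SDPS_E6}) yields $i_{M_k}^G\tau\hookrightarrow i_T^G\lambda_1$, where $\lambda_1$ is the $M_k$-antidominant representative of its $W_{M_k}$-orbit. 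A branching rule calculation will then be used to establish
\[
mult\bk{\lambda_1,r_T^G\pi_0}=mult\bk{\lambda_1,r_T^G\pi},
\]
so that, by Frobenius reciprocity, any irreducible subrepresentation of $\pi$ must have $\lambda_1$ in its Jacquet support, and the matched-multiplicity count forces that subrepresentation to be $\pi_0$; since $\pi\neq 0$ has a nonzero socle, we obtain $\pi_0\hookrightarrow\pi$.

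The main obstacle will be locating a productive chain of inverts. Unlike the unitary situation of \Cref{Prop:Unitary_cases}, the real part of the central character of $\Omega_0$ here is nontrivial, which restricts which Levi subgroups admit irreducible invert targets and forces a more delicate traversal of the Dynkin diagram. If no single chain produces an exponent $\lambda_1$ whose multiplicity in $r_T^G\pi$ matches that in $r_T^G\pi_0$, one may need to combine two different embeddings — matching multiplicities at two distinct exponents simultaneously — in order to rule out any competing irreducible subquotient of $\pi$ that admits $\lambda_{a.d.}$ in its Jacquet support. Should even this fail, a fallback route would be to identify $\pi_0$ with the Langlands submodule of a cleverly chosen standard module, using the fact that $\lambda_{a.d.}$ already determines $\pi_0$ among all the subquotients of $\pi$ by \Cref{Lem:Unique_Subquot_with_ad_exp} together with the orthogonality rule of \cite[Appendix A]{SDPS_E7}.
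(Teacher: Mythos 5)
Your main route has a genuine gap, and it is exactly the one the paper flags just before this lemma: the standard ``embed $\pi$ into $i_T^G\lambda_1$ via a chain of inverts and match $mult\bk{\lambda_1,r_T^G\pi_0}=mult\bk{\lambda_1,r_T^G\pi}$'' argument of \Cref{Prop:Unique_Irr_Subs_Jac_and_Ind} is insufficient here. Note that if your argument went through, it would show that $\pi_0$ is the \emph{unique} irreducible subrepresentation of $\pi$; but $\coset{2,-1/2,1}$ is precisely one of the two unresolved cases of \Cref{Subsec:Unresolved_Cases}, where the socle length ($1$ or $2$) is left open (see \Cref{Rem:E8_P2_M1_2_O1}, which exhibits a second subquotient $\pi_1$ with $\lambda_0\leq r_T^G\pi_1$ whose status as a subrepresentation cannot be decided). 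So the crucial missing ingredient — an embedding $\pi\hookrightarrow i_T^G\lambda_1$ together with a matched multiplicity at $\lambda_1$ — cannot be produced, and your two fallbacks (matching at two exponents, or a Langlands-submodule identification) are not developed enough to substitute for it; the latter only yields $\pi_0\hookrightarrow i_T^G\lambda_{a.d.}$, which does not by itself relate $\pi_0$ to $\pi$ as a subrepresentation.

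The paper's actual proof runs in the opposite direction and uses an idea absent from your proposal. One embeds only $\pi_0$ (not $\pi$) into $i_T^G\lambda_1$ for $\lambda_1=\eeightchar{-1}{-1}{-1}{-1}{-1}{5}{-1}{-1}$, via the central character argument, and then applies the normalized intertwining operator $N_w\bk{\lambda_1}:i_T^G\lambda_1\to i_T^G\lambda_0$ with $w=\s{5}\s{4}\s{3}\s{2}\s{4}\s{5}\s{1}\s{3}\s{4}\s{2}$, where $\lambda_0$ is the initial exponent of $\pi$. Factoring $N_w\bk{\lambda_1}=N_{w'}\bk{\lambda_2}\circ N_{s_5}\bk{\lambda_1}$ with $N_{w'}\bk{\lambda_2}$ an isomorphism, one checks via \Cref{Eq:gemoetric_lemma} that $\lambda_{a.d.}$ does not occur in $r_T^G\ker\bk{N_{s_5}\bk{\lambda_1}}$, so $\pi_0$ survives the operator and embeds into $i_T^G\lambda_0$; since $\pi_0$ occurs in $i_T^G\lambda_0$ with multiplicity $1$ and $\pi\hookrightarrow i_T^G\lambda_0$ contains $\pi_0$ as a subquotient, it follows that $\pi_0\hookrightarrow\pi$. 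To repair your proposal you would need to add this kernel analysis of a rank-one intertwining operator (or an equivalent device) rather than rely on multiplicity matching.
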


\begin{proof}	
	
	Since $\lambda_{a.d.}$ is the anti-dominant exponent of $\pi$, the uniqueness of $\pi_0$ follows from \cite[Lemma A.1]{SDPS_E7}.
	In fact, it follows that $\pi_0$ appears in $i_T^G\lambda_{a.d.}$ with multiplicity $1$.
	
	We fix the following exponents of $\pi$
	\[
	\begin{array}{l}
		\lambda_0 = \eeightchar{-1}{7}{-1}{-1}{-1}{-1}{-1}{-1} \\
		\lambda_1 = \eeightchar{-1}{-1}{-1}{-1}{-1}{5}{-1}{-1} \\
		\lambda_2 = \eeightchar{-1}{-1}{-1}{-2}{1}{4}{-1}{-1},
	\end{array}
	\]
	where $\lambda_0$ is its initial exponent.
	
	By a branching rule calculation, we have
	\[
	mult\bk{\lambda_1,r_T^G \pi} =
	mult\bk{\lambda_1,r_T^G \pi_0} = 2 .
	\]
	
	On the other hand, by a central character argument (see \cite[Lemma. 3.12]{SDPS_E6} for details) it holds that $\pi_0\hookrightarrow i_T^G\lambda_1$.
	We consider the normalized intertwining operator (see \cite[Subsection 2D]{SDPS_E7} for a short account on these operators and their properties)
	\[
	i_T^G\lambda_1 \overset{N_w}{\longrightarrow} i_T^G\lambda_0,
	\]
	where
	\[
	w=\s{5}\s{4}\s{3}\s{2}\s{4}\s{5}\s{1}\s{3}\s{4}\s{2} .
	\]
	Note that it decomposes as $N_w\bk{\lambda_1} = N_{w'}\bk{\lambda_2} \circ N_{s_5}\bk{\lambda_1}$, where
	\[
	w'=\s{4}\s{3}\s{2}\s{4}\s{5}\s{1}\s{3}\s{4}\s{2}
	\]
	and that $N_{w'}\bk{\lambda_2}$ is an isomorphism between $i_T^G\lambda_2$ and $i_T^G\lambda_0$.
	
	On the other hand, the operator
	\[
	i_T^G\lambda_1 \overset{N_{s_5}}{\longrightarrow} i_T^G\lambda_2,
	\]
	is not an isomorphism.
	However, \Cref{Eq:gemoetric_lemma} implies that
	\[
	mult\bk{\lambda_{a.d.},r_T^G\ker\bk{N_{s_5}\bk{\lambda_1}}} = 0 .
	\]
	It follows that $\pi_0$ is not contained in the kernel of $N_{s_5}\bk{\lambda_1}$ or $N_w\bk{\lambda_1}$ and hence $\pi_0$ is a subrepresentation of $i_T^G\lambda_0$.
	In particular, since $\pi_0$ appears in $i_T^G\lambda_0$ with multiplicity $1$, it is also a subrepresentation of $\pi$.

\end{proof}

\begin{Remark}
	\label{Rem:E8_P2_M1_2_O1}
	By \Cref{Eq:gemoetric_lemma}, $mult\bk{\lambda_0,r_T^G \pi} = 2$.
	Hence, the length of the socle of $\pi$ is either $1$ or $2$.
	We point out that, by \Cref{Tab::E7::COMP::P_6}, $\pi_0$ is also an irreducible subquotient of $i_{M_6}^G\Omega_{6,-1}$.
	It follows from \Cref{Eq:gemoetric_lemma} that $mult\bk{\lambda_0,r_T^G i_{M_6}^G\Omega_{6,-1}} = 1$ and hence also $mult\bk{\lambda_0,r_T^G \pi_0} = 1$, while on the other hand $mult\bk{\lambda_0,r_T^G \pi} = 2$.
	Hence, $\pi$ admits an irreducible subquotient $\pi_1\neq \pi_0$ of $\pi$ such that $\lambda_0\leq r_T^G\pi_1$.
	It remains to determine whether $\pi_1$ is a subrepresentation of $\pi$ or not.
	Namely, whether the length of the maximal semi-simple subrepresentation of $\pi$ is $1$ or $2$.


	It seems that the methods used above are futile for this case.
	One could technically use the method of \cite[Proposition 4.9]{SDPS_E7} to determine whether $\pi_1$ is a subrepresentation of $\pi$, or not.	
	The method described there has two steps:
	\begin{itemize}
		\item Find a standard intertwining operator $N_w\bk{\lambda_0}$, such that the kernel of the operator is composed of the irreducible subrepresentations of $\pi$ other than $\pi_0$.
		
		\item Calculate the dimension $d$ of the space of Iwahori-fixed vectors in the kernel of $N_w\bk{\lambda_0}$.
		If $d=0$, then $\pi_0$ is the unique irreducible subrepresentation and if $d>0$ then $\pi$ admits a socle of length at least $2$.
	\end{itemize}
	 
	This is, however, beyond the capabilities of the computers available to us as the minimal Weyl word that could be used for this is
	\[
	w=\s{5}\s{4}\s{3}\s{2}\s{4}\s{5}\s{1}\s{3}\s{4}\s{2}.
	\]
	which is of length $10$.
	The issue is that the required computing time grows exponentially with the length of the Weyl word and the cardinalities of relevant Weyl groups and their coset spaces.
	
	Bellow, we suggest a slightly simpler method to determine this in hope that it could be used in the near future.
	In the discussion, we give further indications on the properties of $\pi_1$.
	
	Let $\pi$, $\pi_0$, $\lambda_0$, $\lambda_1$ and $\lambda_{a.d.}$ be as in \Cref{Lem:P2S1_2O1_spherical_is_sub} and let $w$ be as above.

	Let $\Theta_0=\set{1,2,3,4,5}$.
	Since $w\in W_{\Theta_0}$, the action of the intertwining operator $N_w\bk{\lambda_0}$ factors through any of $M_6$, $M_7$ or $M_8$.
	We will describe a unified argument for a calculation that can be performed for each choice of $M_j$, with $j\in\set{6,7,8}$, to factor through.
	We will then explain why factorizing through $M_8$ seems to be the most efficient.
	
	Write $\Omega_0=\Omega_{2,-\frac{1}{2},1}$ and $\Omega_1=\jac{M_{2,j}}{M_2}{\Omega_0}$.
	
	We now recall, from \cite{SDPS_E6,SDPS_E7,MR2017065}, that
	\[
	length \bk{i_{M_{2,j}}^{M_j}\Omega_1} = \piece{2,& j=6,7 \\ 3,& j=8 .}
	\]
	
	More precisely:
	\begin{itemize}
		\item If $j=6$ or $j=7$, $\tau = i_{M_{2,j}}^{M_j}\Omega_1$ is of length $2$ and one may write a non-splitting exact sequence
		\[
		\tau_0 \hookrightarrow \tau \twoheadrightarrow \tau_1 ,
		\]
		where $\tau_0$ and $\tau_1$ are irreducible and $\tau_1$ is spherical.
		In the case $j=6$, the quotient $\tau_1$ is in-fact $1$-dimensional.
		
		\item If $j=8$, $\tau=i_{M_{2,8}}^{M_8}\Omega_1$ is of length $3$ and one may write a non-splitting exact sequence
		\[
		\tau_0 \hookrightarrow \tau \twoheadrightarrow \tau_1 \oplus \tau_{-1},
		\]
		where $\tau_0$, $\tau_{-1}$ and $\tau_1$ are irreducible and $\tau_1$ is spherical.
	\end{itemize}
	
	Since 
	\[
	mult\bk{\lambda_1,r_T^{M_j} \tau} =
	mult\bk{\lambda_1,r_T^{M_j} \tau_1} = 1 ,
	\]
	we conclude that the kernel and image of the intertwining operator $N_w^{M_j}\bk{\lambda_0}$ is given by:
	\[
	I=\Image\bk{N_w^{M_j}\bk{\lambda_0}}=\tau_1, \quad
	K=\ker\bk{N_w^{M_j}\bk{\lambda_0}} = \piece{\tau_0,& j=6,7 \\ \tau_0+\tau_{-1},& j=8 .}
	\]
	
	
	Hence, it holds that
	\[
	i_{M_j}^G\bk{K} \hookrightarrow i_{M_{2,j}}^G\bk{\Omega_1} \twoheadrightarrow i_{M_j}^G\bk{I} .
	\]
	On the other hand, since
	\[
	mult\bk{\lambda_1,r_T^G \pi_0} =
	mult\bk{\lambda_1,r_T^G i_{M_{2,j}}^G\bk{\Omega_1}} =
	mult\bk{\lambda_1,r_T^G \pi} = 2 ,
	\]
	it follows that $\pi_0$ is the unique irreducible subrepresentation of  $i_{M_j}^G\bk{I}$.
	
	It follows from the above discussion that $\pi$ admits a unique irreducible subrepresentation if and only if $\pi \cap i_{M_j}^G\bk{K} = 0$.
	
	In order to determine if the intersection is indeed $0$, we consider
	\[
	\dim_\C \coset{\pi^{\mathcal{J}} \cap i_{M_j}^G\bk{K}^{\mathcal{J}}},
	\]
	where $\mathcal{J}$ denotes the Iwahori-Hecke algebra of $G$.
	That is, we calculate the intersection of the $17,280$-dimensional space $\pi^{\mathcal{J}}$ with $i_{M_j}^G\bk{K}^{\mathcal{J}}$ whose dimension depends on $j$ and is given by
	\[
	\dim_\C \bk{i_{M_j}^G\bk{K}^{\mathcal{J}}} = \coset{\dim_\C \bk{ r_T^{M_j}\bk{K} } } \times \Card{W^{M_j,T}} .
	\]
	We collect relevant data in the following table:
	
	
	\begin{center}
		\begin{tabular}{|c|c|c|c|}\hline 
			$j$ & $6$ & $7$ & $8$ \\ \hline
			$\dim_\C r_T^{M_j}\bk{K}$ & 1919 & 15 & 120=105+15 \\ \hline 
			$\Card{W^{M_j,T}}$ & 60,480 & 6,720 & 240 \\ \hline
			$\dim_\C \bk{i_{M_j}^G\bk{K}^{\mathcal{J}}}$ & 116,061,120 & 100,800 & 28,800  \\ \hline
		\end{tabular}
	\end{center}
	
	Thus, while the choice of $j=6$ seems more intuitive, it is seems like using $j=8$ would be more efficient.
	
\end{Remark}

We now turn to deal with the other unresolved case, $\pi=i_{M_5}^G\Omega_{5,-\frac{1}{2}}$.
In this case, a branching rule calculation shows that the irreducible spherical subquotient $\pi_0$ of $\pi$ is a subrepresentation.
However, from \Cref{Eq:gemoetric_lemma} we have $mult\bk{\lambda_0,r_T^G \pi} = 30$, where
\[
\lambda_0 = \eeightchar{-1}{-1}{-1}{-1}{4}{-1}{-1}{-1}
\]
is the initial exponent of $\pi$.
Thus, it is not immediately clear that the length of the socle of $\pi$ is at most $2$.

\begin{Lem}
	\label{Lem:P5S1_2O1_admits_at_most_2_irr_subs}
	The representation $\pi=i_{M_5}^G\Omega_{5,-\frac{1}{2}}$ admits a maximal semi-simple subrepresentation of length at most $2$.	
\end{Lem}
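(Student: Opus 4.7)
The plan is to bound the socle length of $\pi$ by locating an exponent $\lambda^\star\leq r_T^G\pi$ with the two properties that (a) every irreducible subrepresentation $\tau\hookrightarrow\pi$ satisfies $\lambda^\star\leq r_T^G\tau$, and (b) $mult\bk{\lambda^\star,r_T^G\pi}=2$. Since the Jacquet functor is exact, an inclusion of the socle into $\pi$ induces an inclusion of Jacquet modules, and then (a) together with (b) forces the number of irreducible summands of the socle to be at most $2$. This will circumvent the fact that $mult\bk{\lambda_0,r_T^G\pi}=30$, which is too large to yield any useful bound through the initial exponent.

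For (a), I would first exploit that $\chi=\Id_{F^\times}$, so by \cite[Lemma A.1]{SDPS_E7} there is a unique anti-dominant exponent $\lambda_{a.d.}$ in the $W$-orbit of the exponents of $\pi$. The central character argument of \cite[Lemma 3.12]{SDPS_E6} then produces an embedding $\pi\hookrightarrow i_T^G\lambda_{a.d.}$, and Frobenius reciprocity forces $\lambda_{a.d.}\leq r_T^G\tau$ for any irreducible subrepresentation $\tau\hookrightarrow\pi$. Setting $\lambda^\star=\lambda_{a.d.}$ handles (a) automatically, and the remaining task reduces to directly evaluating $mult\bk{\lambda_{a.d.},r_T^G\pi}$ by the geometric lemma \Cref{Eq:gemoetric_lemma}; if this count equals $2$, the claim is immediate.

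Should the direct count at $\lambda_{a.d.}$ exceed $2$, I would fall back on the chain-of-isomorphisms technique used in \Cref{Prop:Unitary_cases} and \Cref{Prop:cosocle_len_2_E7_P2_M1_O1} to produce a finer embedding $\pi\hookrightarrow i_{M_j}^G\bk{\sigma}$, where $M_j$ is an intermediate maximal Levi and $\sigma$ is an irreducible degenerate principal series of $M_j$ whose structure has been catalogued in \cite{SDPS_E6,SDPS_E7,MR2017065}. A judicious choice of $j$ lets one select $\lambda^\star$ among the exponents of $\sigma$ with $mult\bk{\lambda^\star,r_T^G\pi}=2$, while the embedding ensures that every irreducible subrepresentation of $\pi$ has $\lambda^\star$ in its Jacquet module, so (a) and (b) are both in force.

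The main obstacle, mirroring \Cref{Rem:E8_P2_M1_2_O1}, is sharpening ``at most $2$'' to a definite value: pinning down whether the socle has length exactly $1$ or exactly $2$ would require identifying the kernel of a standard intertwining operator associated to a Weyl word whose length places the computation beyond our current machines. The bound ``at most $2$'' is therefore the best conclusion accessible by the methods of this paper, and is exactly what \Cref{Lem:P5S1_2O1_admits_at_most_2_irr_subs} claims.
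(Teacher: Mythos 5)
There is a genuine gap in both branches of your argument. In the first branch you assert that the central character argument yields an embedding $\pi\hookrightarrow i_T^G\lambda_{a.d.}$. It does not: \cite[Lemma 3.12]{SDPS_E6} shows that an irreducible \emph{subquotient} whose Jacquet module contains an anti-dominant exponent embeds into $i_T^G\lambda_{a.d.}$, not that $\pi$ itself does (Frobenius reciprocity only produces a nonzero map $\pi\to i_T^G\lambda_{a.d.}$ when $\lambda_{a.d.}$ is a \emph{quotient} of $r_T^G\pi$, and even then the map need not be injective; already for $SL_2$ the analogous embedding of a principal series into the one at the inverted exponent fails). Worse, if your claimed embedding held, then every irreducible subrepresentation of $\pi$ would contain $\lambda_{a.d.}$, and since by \cite[Lemma A.1]{SDPS_E7} the subquotient $\pi_0$ containing $\lambda_{a.d.}$ is unique, the socle would have length exactly $1$ --- precisely the statement the paper cannot establish and which is left open in \Cref{Subsec:Unresolved_Cases}. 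So this branch proves too much and must be incorrect.

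Your fallback branch is the right general strategy --- the paper does embed $\pi\hookrightarrow i_{M_8}^G\bk{i_{M_{5,8}}^{M_8}\Omega_1}\hookrightarrow i_T^G\lambda_1$ with $\lambda_1=\eeightchar{0}{0}{0}{-1}{0}{0}{0}{1}$ --- but your requirement that the chosen exponent occur in $r_T^G\pi$ with multiplicity exactly $2$ cannot be met here: the paper's $\lambda_1$ satisfies $mult\bk{\lambda_1,r_T^G\pi}=864$, and no exponent contained in every subrepresentation with multiplicity $2$ is available. The ingredient you are missing is the Orthogonality Rule: since $\Theta_{\lambda_1}=\set{1,2,3,5,6,7}$ gives a Levi of type $A_2\times A_1\times A_3$ with Weyl group of order $288$, every irreducible $\sigma$ with $\lambda_1\leq r_T^G\sigma$ has $288\divides mult\bk{\lambda_1,r_T^G\sigma}$. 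Combining this with the branching-rule lower bound $mult\bk{\lambda_1,r_T^G\pi_0}\geq 576$ and the total $864$, any further irreducible subrepresentation of $\pi$ (each of which embeds in $i_T^G\lambda_1$ and hence contributes at least $288$ to the count at $\lambda_1$) leaves room for at most one besides $\pi_0$, since $576+2\cdot 288>864$. Without this divisibility-plus-lower-bound counting, your embedding alone bounds the socle length only by $864$, which is vacuous.
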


\begin{proof}
	We consider the following exponents of $\pi$:
	\[
	\begin{array}{l}
		\lambda_0 = \eeightchar{-1}{-1}{-1}{-1}{4}{-1}{-1}{-1}, \\
		\lambda_1 = \eeightchar{0}{0}{0}{-1}{0}{0}{0}{1} \\
		\lambda_{a.d.} = \eeightchar{0}{0}{0}{0}{-1}{0}{0}{0} .
	\end{array}
	\]
	Here, $\lambda_0$ is the initial exponent of $\pi$, $\lambda_{a.d.}$ is its anti-dominant exponent.
	Let $\pi_0$ denote the unique irreducible subquotient of $\pi$ such that $\lambda_{a.d.}\leq r_T^G\pi_0$.
	
	Since $mult\bk{\lambda_0,r_T^G\pi} = 30$ and since a branching rule calculation yielded only $mult\bk{\lambda_0,r_T^G\pi_0}\geq 4$, the claim is not immediate.
	However, this does imply that $\pi_0\hookrightarrow\pi$.
	
	
	On the other hand, \cite[Lemma A.1]{SDPS_E7} implies that for every $\sigma\in Rep\bk{G}$, it holds that
	\[
	288 \divides mult\bk{\lambda_1,r_T^G\sigma} .
	\]
	Let $\Omega_1=r^{M_5}_{M_{5,8}} \Omega_{5,-1\frac{1}{2}}$ and note that $i_{M_{5,8}}^{M_8}\Omega_1$ is irreducible.
	Hence, reasoning as in \Cref{Prop:Unique_Irr_Subs_Jac_and_Ind}, it follows that
	\[
	\pi \hookrightarrow i_{M_8}^G \bk{i_{M_{5,8}}^{M_8}\Omega_1}\hookrightarrow i_T^G\lambda_1
	\]
	as $\lambda_1 = r_T^{M_{5,8}}\Omega_1$.
	By \Cref{Eq:gemoetric_lemma},
	\[
	mult\bk{\lambda_1,r_T^G\pi} = 864 
	\]
	and on the other hand, a branching rule calculation yields,
	\[
	mult\bk{\lambda_1,r_T^G\pi_0} \geq 576 .
	\]
	Hence, $\pi$ could have at most one more subquotient $\pi_1$ such that $\pi_1\hookrightarrow i_T^G\lambda_1$.
	Thus, the length of the maximal semi-simple subrepresentation of $\pi$ is at most $2$.
\end{proof}

\begin{Remark}
	Attempting to follow the methods of \cite[Proposition 4.9]{SDPS_E7} in order to determine the length of the socle of $\pi$ is, too, beyond the capabilities of current available computers and even more so.
	The exponent $\lambda$ ``closest'' to $\lambda_0$ such that a branching rule calculation guarantees that $mult\bk{\lambda,r_T^G\pi_0}=mult\bk{\lambda,r_T^G\pi}$ is $\lambda=\lambda_{a.d.}$
	and the shortest Weyl element $w$ such that $w\cdot\lambda_0=\lambda$ is
	\[
	w= \s{5} \s{6} \s{7} \s{8} \s{4}\s{3}\s{2}\s{4}\s{5}\s{6}\s{7}\s{4}\s{1}\s{3} \s{2}\s{4}\s{5}\s{6}\s{4}\s{1}\s{3}\s{2}\s{4}\s{5}
	\]
	which is of length $24$ and thus, the associated intertwining operator cannot be realistically generated in currently available computers.
	Also, since this word contains all $8$ generators of $W$, the associated operator does not factor via a Levi subgroup and the method suggested in \Cref{Rem:E8_P2_M1_2_O1} is not applicable here.
	
	Here, we are able to show that the same calculation can be performed with a Weyl element of length $21$.
	While this is an improvement, this calculation still seems to be unfeasible.
	
	As noted above,
	\[
	\pi \hookrightarrow i_T^G\lambda_1 = 
	i_{M_4}^{G} \bk{i_T^{M_4}\lambda_1}.
	\]
	We also note that, by \cite[Lemma A.4 and Equation (OR)]{SDPS_E7}, $i_T^{M_4}\lambda_1$ is non-semi-simple of length $2$, we write
	\[
	\sigma_1 \hookrightarrow i_T^{M_4}\lambda_1 \twoheadrightarrow \sigma_0 ,
	\]
	where $\sigma_0$ is spherical.
	
	Let
	\[
	\lambda_2 = \eeightchar{0}{0}{0}{-1}{0}{0}{-1}{1}.
	\]
	We show that $i_T^{M_4}\lambda_2=\sigma_0\oplus \sigma_1$ and that $\pi_i$ is the unique irreducible subrepresentation of $i_{M_4}^G \sigma_i$.
	
	Indeed, let $\Omega_2$ denote the $1$-dimensional representation of $L_5$ such that $r_{T}^{L_5} \Omega_2 = \lambda_2$.
	By \cite[Corollary 4.4]{SegalSingularities}, both $i_{L_5}^{M_4}\Omega_2$ and $i_{L_5}^{M_4}\bk{\Omega_2\otimes St_{L_5}}$ admit a unique irreducible subrepresentation.
	Similarly, so do $i_{L_5}^G\Omega_2$ and $i_{L_5}^G\bk{\Omega_2\otimes St_{L_5}}$.
	
	On the other hand, since
	\[
	i_T^{L_5} \lambda_2 = \Omega_2 \oplus \bk{\Omega_2\otimes St_{L_5}},
	\]
	it follows that $\sigma_0=i_T^{L_5}\Omega_2$ and $\sigma_1=i_T^{L_5}\bk{\Omega_2\otimes St_{L_5}}$.
	We thus conclude that $\pi_i$ is the unique irreducible subrepresentation of $i_{M_4}^G \sigma_i$.
	
	
	
%
	
	We now note that $\sigma_1 = \ker\bk{N^{M_4}_{\s{8}\bk{\lambda_1}}}$.
	On the other hand, $\pi\hookrightarrow i_T^G\lambda_1$.
	If $\pi_1$ is a subrepresentation of $\pi$, then the kernel of $N_{w'}\bk{\lambda_0}\res{\pi}$ is non-trivial, where
	\[
	w' = \s{8} \s{4}\s{3}\s{2}\s{4}\s{5}\s{6}\s{7}\s{4}\s{1}\s{3} \s{2}\s{4}\s{5}\s{6}\s{4}\s{1}\s{3}\s{2}\s{4}\s{5} .
	\]
	Here, too, all $8$ generators of $W$ appear and thus, the method suggested in \Cref{Rem:E8_P2_M1_2_O1} cannot be applied to this case.

\end{Remark}

\appendix

\section{Branching Rules Database}
\label{App:Database}

In this section, we retain the notations of \Cref{Subsec:Notations} and list data on irreducible representations of Levi subgroups which is used for the branching rule calculations performed for this paper.

That is, we wish to list triples of $\bk{\lambda,M,\tau}$ where $\lambda$ is a character of $T$, $M$ is a Levi subgroup of $G$ and $\tau$ is the unique irreducible representation of $M$ such that $\lambda\leq r_T^M \tau$.

We start, by a general rule.
For a character $\lambda$ of $T$, set
\[
\Theta_{\lambda} = \set{\alpha\in\Delta_L \mvert \gen{\lambda,\check{\alpha}}=0}.
\]
It holds that $M=M_{\Theta_{\lambda}}$ admits a unique irreducible representation $\tau$ such that $\lambda \leq r_T^M\tau$, as shown in \cite[Lemma 2.3]{SegalSingularities}, which satisfy
\[
\coset{r_T^{M_{\Theta_{\lambda}}} \tau} = \Card{W_{M_{\Theta_{\lambda}}}} \times \coset{\lambda}.
\]
This can be decoded by the \emph{Orthogonality Rule} which states that, for any irreducible representation $\sigma$ of $G$ it holds that
\[
\lambda \leq \sigma \Rightarrow \Card{W_{M_{\Theta_{\lambda}}}} \times \coset{\lambda} \leq r_T^G \sigma .
\]

For other branching rules however, instead of listing the data by Levi subgroups $M$ of $G$, it is more convenient to list them by simple factors $L$ of the derived subgroups $M^{der}$.
Moreover, we recall that the uniqueness of $\tau$ is not required, only the uniqueness of $\coset{r_T^L\tau}$ is.
Thus we list the semi-simplified Jacquet functors $\coset{r_T^L\tau}$ of irreducible representations.
Finally, it would be more convenient to write these in the ``intrinsic coordinates'' of a maximal split torus of $L$ instead of those of the torus of $G$.

That is, we list triples of data $\bk{L,\lambda,\coset{r_T^G\tau}}$, where:
\begin{itemize}
	\item $L$ is a simple, split and simply-connected $p$-adic group (whose Dynkin diagram is a sub-Dynkin diagram of $E_8$), by abuse of notations we fix a maximal split torus $T$ of $L$.
	\item $\lambda$ is a character of $T$.
	\item $\coset{r_T^L\tau}$ is the unique semi-simplified Jacquet functor of an irreducible representation $\tau$ of $L$ such that $\lambda\leq r_T^G\tau$.
\end{itemize}

We separate the list of branching rules by the type of $L$.
Most of these rules are listed in \cite[Appendix A]{SDPS_E7} while the rest can be deduced from the results of \cite{SDPS_E6,SDPS_E7,MR2017065,MR1346929,MR1134591,MR0425030}.

For convenience of applying the branching rules, we list the elements appearing in $\coset{r_T^L\tau}$ using the action of the Weyl group $W=W_L$ of $L$ on the characters of $T$.
Also, we point out the each rules can be written with several variations, either due to automorphisms of the Dynkin diagra (if $L$ is of type $A_n$, $D_n$ or $E_6$) but also due to the Aubert involution (see \cite{MR1951440}), \textbf{we list only one variation of each rule}.

\subsection{$L$ of type $A_n$}

Let $L$ be a simple group of type $A_n$.
We think of it as a simple factor in the derived group $M^{der}$ of a Levi subgroup $M$ of $G$.
We fix a labeling for the Dynkin diagram of type $A_n$ and use this labeling to formulate the branching rules arising from $L$ of type $A_n$ instead of the labeling inherited from that of the Dynkin diagram of type $E_8$ given in \Cref{Subsec:GroupData}.
This labeling is given by
\[
\begin{tikzpicture}[scale=0.5]
	\draw (-1,0) node[anchor=east]{};
	\draw (0 cm,0) -- (4cm,0);
	\draw[dashed] (7 cm, 0) -- (4cm,0);
	\draw (7 cm,0) -- (9cm,0);
	\draw[fill=black] (0 cm, 0 cm) circle (.25cm) node[below=4pt]{$\beta_1$};
	\draw[fill=black] (2 cm, 0 cm) circle (.25cm) node[below=4pt]{$\beta_2$};
	\draw[fill=black] (4 cm, 0 cm) circle (.25cm) node[below=4pt]{$\beta_3$};
	\draw[fill=black] (7 cm, 0 cm) circle (.25cm) node[below=4pt]{$\beta_{n-1}$};
	\draw[fill=black] (9 cm, 0 cm) circle (.25cm) node[below=4pt]{$\beta_{n}$};
\end{tikzpicture}
\]

Bellow, we list the branching rules arising from $L$ of type $A_n$ which were implemented by us in the context of this paper, this list is by no means exhaustive and there are many branching rules arising from other irreducible representations of groups of type $A_n$:

\begin{itemize}
	\item If $L$ of type $A_1$, it admits a unique simple root $\beta_1$.
	For $\lambda$ such that $\gen{\lambda,\check{\beta_1}} \neq \pm1$, there exist a unique irreducible representation $\tau$ such that $\lambda\leq r_T^L\tau$.
	In fact, it holds that
	\[
	\coset{\jac{L}{T}{\tau}} = \coset{\lambda} + \coset{s_{\beta_1} \cdot \lambda}.
	\]
	This could be encoded as follows
	\begin{equation}
		\label{Eq:A1_rule}
		\lambda\leq \jac{L}{T}{\tau},\ \gen{\lambda,\check{\beta_1}} \neq \pm 1 \Longrightarrow \coset{\lambda}+\coset{s_{\beta_1}\cdot\lambda} \leq \coset{\jac{L}{T}{\tau}} .
	\end{equation}
	In what follows, branching rules will be encoded in this fashion.
	
	\item For group $L$ of type $A_n$ with $n>2$, we have the following rule:
	\[
	\piece{\lambda\leq \jac{L}{T}{\tau}, \\
	\gen{\lambda,\check{\beta_1}} = \pm 1,\\ 
	\gen{\lambda,\check{\beta_k}} = 0 \quad \forall\ 2\leq k\leq n}
	\Longrightarrow
	\coset{\jac{L}{T}{\tau}} = \sum_{w \in W^{L,T}} 
	(n-l(w)) \cdot (n-1)! \coset{w\cdot \lambda} ,
	\]
	where $M= M_{\set{\beta_2 \dots \beta_{n}}}$.
	
	For example, in type $A_2$, this rule can be written as
	\[
	\piece{\lambda\leq \jac{L}{T}{\tau}, \\
	\gen{\lambda,\check{\beta_1}} = \pm 1,\\ 
	\gen{\lambda,\check{\beta_2}} = 0}
	\Longrightarrow 2\times\coset{\lambda}+\coset{\s{\beta_1}\cdot\lambda} \leq \coset{\jac{L}{T}{\tau}} .
	\]
	In type $A_3$, this rule can be written as
	\[
	\piece{\lambda\leq \jac{L}{T}{\tau}, \\
		\gen{\lambda,\check{\beta_1}} = 1,\\ 
		\gen{\lambda,\check{\beta_2}} =\gen{\lambda,\check{\beta_3}} = 0}
	\Longrightarrow 6\times\coset{\lambda} 
	+ 4\times \coset{\s{\beta_1}\cdot\lambda} 
	+ 2\times \coset{\s{\beta_2} \s{\beta_1}\cdot\lambda} \leq \coset{\jac{L}{T}{\tau}}
	\]

	\item For $L$ of type $A_3$ we have the following two rules:
	\[
	\piece{\lambda\leq \jac{L}{T}{\tau}, \\
		\gen{\lambda,\check{\beta_1}} = 1,\\ 
		\gen{\lambda,\check{\beta_2}} = 0, \\
		\gen{\lambda,\check{\beta_3}} = -1 }
	\Longrightarrow 2\times\coset{\lambda} + \coset{\s{\beta_1}\cdot\lambda} + \coset{\s{\beta_3}\cdot\lambda} + 2\times \coset{\s{\beta_1} \s{\beta_3}\cdot\lambda} \leq \coset{\jac{L}{T}{\tau}}
	\]
	and
	\[
	\piece{\lambda\leq \jac{L}{T}{\tau}, \\
		\gen{\lambda,\check{\beta_1}} = 1,\\ 
		\gen{\lambda,\check{\beta_2}} = 0, \\
		\gen{\lambda,\check{\beta_3}} = 1 }
	\Longrightarrow 2\times\coset{\lambda} + \coset{\s{\beta_1}\cdot\lambda} + \coset{\s{\beta_3}\cdot\lambda}\leq \coset{\jac{L}{T}{\tau}} .
	\]
	
	\item In type $A_4$, we have the following two rules:
	\[
	\begin{array}{c}
		\piece{\lambda\leq \jac{L}{T}{\tau}, \\
		\gen{\lambda,\check{\beta_2}} = 1,\\ 
		\gen{\lambda,\check{\beta_k}} = 0, \quad k=1,3,4} \\
	\Longrightarrow 
	12\times\coset{\lambda} 
	+ 8\times \coset{\s{\beta_2}\cdot\lambda} 
	+ 4\times \coset{\s{\beta_3} \s{\beta_2}\cdot\lambda} 
	+ 4\times \coset{\s{\beta_1} \s{\beta_2}\cdot\lambda} 
	+ 2\times \coset{\s{\beta_1} \s{\beta_3} \s{\beta_2} \cdot\lambda}\leq \coset{\jac{L}{T}{\tau}} .
	\end{array}
	\]
	and
	\[
	\begin{array}{c}
		\piece{\lambda\leq \jac{L}{T}{\tau}, \\
			\gen{\lambda,\check{\beta_1}} = 1,\\ 
			\gen{\lambda,\check{\beta_4}} = -1,\\ 
			\gen{\lambda,\check{\beta_k}} = 0, \quad k=2,3} \\
		\Longrightarrow 
		6\times\coset{\lambda} 
		+ 4\times \coset{\s{\beta_1}\cdot\lambda} 
		+ 4\times \coset{\s{\beta_4}\cdot\lambda} 
		+ 4\times \coset{\s{\beta_1} \s{\beta_4}\cdot\lambda} 
		+ 2\times\coset{\s{\beta_3}\s{\beta_4}\cdot\lambda}\\ 
		+ 2\times \coset{\s{\beta_2} \s{\beta_1}\cdot\lambda}
		+ 4\times \coset{\s{\beta_4} \s{\beta_2} \s{\beta_1}\cdot\lambda} 
		+ 4\times \coset{\s{\beta_1} \s{\beta_3} \s{\beta_4}\cdot\lambda} \leq \coset{\jac{L}{T}{\tau}} .
	\end{array}
	\]
	
	\item In type $A_5$ we have the following two rules:
	\[
	\begin{array}{c}
		\piece{\lambda\leq \jac{L}{T}{\tau}, \\
			\gen{\lambda,\check{\beta_3}} = 1,\\ 
			\gen{\lambda,\check{\beta_k}} = 0, \quad k=1,2,4,5} \\
		\Longrightarrow 
		36\times\coset{\lambda} 
		+ 24\times \coset{\s{\beta_3}\cdot\lambda} 
		+ 12\times \coset{\s{\beta_2}\s{\beta_3}\cdot\lambda} 
		+ 12\times \coset{\s{\beta_4}\s{\beta_3}\cdot\lambda} 
		+ 6\times \coset{\s{\beta_2} \s{\beta_4} \s{\beta_3} \cdot\lambda}\leq \coset{\jac{L}{T}{\tau}} .
	\end{array}
	\]
	and
	\[
	\begin{array}{c}
		\piece{\lambda\leq \jac{L}{T}{\tau}, \\
			\gen{\lambda,\check{\beta_1}} = 1,\\ 
			\gen{\lambda,\check{\beta_4}} = -1,\\ 
			\gen{\lambda,\check{\beta_k}} = 0, \quad k=2,3,5} \\
		\Longrightarrow 
		12\times\coset{\lambda} 
		+ 12\times \coset{\s{\beta_5} \s{\beta_4} \s{\beta_2} \s{\beta_1} \cdot\lambda}
		+ 8\times \coset{\s{\beta_1}\cdot\lambda} 
		+ 8\times \coset{\s{\beta_4}\cdot\lambda} 
		+ 8\times\coset{\s{\beta_1}\s{\beta_4}\cdot\lambda} \\
		+ 8\times \coset{\s{\beta_4} \s{\beta_2} \s{\beta_1}\cdot\lambda} 
		+ 8\times \coset{\s{\beta_1} \s{\beta_3} \s{\beta_4}\cdot\lambda}
		+ 8\times \coset{\s{\beta_5} \s{\beta_4} \s{\beta_1}\cdot\lambda}
		+ 4\times\coset{\s{\beta_3}\s{\beta_4}\cdot\lambda}\\
		+ 4\times \coset{\s{\beta_5} \s{\beta_4} \cdot\lambda}
		+ 4\times \coset{\s{\beta_5} \s{\beta_1} \cdot\lambda}
		+ 4\times \coset{\s{\beta_5} \s{\beta_1} \s{\beta_3} \s{\beta_4} \cdot\lambda}
		+ 2\times \coset{\s{\beta_5} \s{\beta_3} \s{\beta_4} \cdot\lambda}
		 \leq \coset{\jac{L}{T}{\tau}} .
	\end{array}
	\]
\end{itemize}

\subsection{$L$ of type $D_n$}

We fix the following labeling of the Dynkin diagram of type $D_n$. 
\[\begin{tikzpicture}[scale=0.5]
	\draw (0 cm,0) -- (4 cm,0);
	\draw[dashed] (4 cm,0) -- (8 cm,0);
	\draw (8 cm,0) -- (10 cm,0 cm);
	\draw (8 cm,0) -- (8 cm,2 cm);
	\draw[fill=black] (0 cm, 0 cm) circle (.25cm) node[below=4pt]{$\beta_1$};
	\draw[fill=black] (2 cm, 0 cm) circle (.25cm) node[below=4pt]{$\beta_2$};
	\draw[fill=black] (4 cm, 0 cm) circle (.25cm) node[below=4pt]{$\beta_3$};
	\draw[fill=black] (8 cm, 0 cm) circle (.25cm) node[below=4pt]{$\beta_{n-2}$};
	\draw[fill=black] (10 cm, 0 cm) circle (.25cm) node[below=4pt]{$\beta_n$};
	\draw[fill=black] (8 cm, 2 cm) circle (.25cm) node[right=3pt]{$\beta_{n-1}$};
\end{tikzpicture}\]

If $L$ is of type $D_n$ we encode the branching rules in a similar fashion to that of type $A_n$.

\begin{itemize}
	\item For $L$ of type $D_4$, we have the following two rules:
	\[
	\begin{array}{c}
		\piece{\lambda\leq \jac{L}{T}{\tau}, \\
			\gen{\lambda,\check{\beta_2}} = 1,\\ 
			\gen{\lambda,\check{\beta_k}} = 0, \quad k=1,3,4} \\
		\Longrightarrow 
		8\times\coset{\lambda} 
		+ 5\times \coset{\s{\beta_2}\cdot\lambda} 
		+ 2\times \coset{\s{\beta_1} \s{\beta_2}\cdot\lambda} 
		+ 2\times \coset{\s{\beta_3} \s{\beta_2}\cdot\lambda} 
		+ 2\times \coset{\s{\beta_4} \s{\beta_2}\cdot\lambda} \\
		+\coset{\s{\beta_1}\s{\beta_3}\s{\beta_2}\cdot\lambda}+\coset{\s{\beta_1}\s{\beta_4}\s{\beta_2}\cdot\lambda}+\coset{\s{\beta_3}\s{\beta_4}\s{\beta_2}\cdot\lambda} 
		\leq \coset{\jac{L}{T}{\tau}} .
	\end{array}
	\]
	and
	\[
	\begin{array}{c}
		\piece{\lambda\leq \jac{L}{T}{\tau}, \\
			\gen{\lambda,\check{\beta_1}} = 1,\\ 
			\gen{\lambda,\check{\beta_4}} = -1,\\ 
			\gen{\lambda,\check{\beta_k}} = 0, \quad k=2,3} \\
		\Longrightarrow 
		12\times\coset{\lambda} 
		+ 8\times \coset{\s{\beta_1}\cdot\lambda} 
		+ 8\times \coset{\s{\beta_4}\cdot\lambda} 
		+ 12\times \coset{\s{\beta_1} \s{\beta_4}\cdot\lambda} \\
		+ 4\times\coset{\s{\beta_2}\s{\beta_4}\cdot\lambda}
		+ 4\times \coset{\s{\beta_2} \s{\beta_1}\cdot\lambda}
		\leq \coset{\jac{L}{T}{\tau}} .
	\end{array}
	\]
	
	\item For $L$ of type $D_5$, we have the following two rules:
	\[
	\begin{array}{c}
		\piece{\lambda\leq \jac{L}{T}{\tau}, \\
			\gen{\lambda,\check{\beta_5}} = 1,\\ 
			\gen{\lambda,\check{\beta_k}} = 0, \quad k=1,2,3,4} \\
		\Longrightarrow 
		120\times\coset{\lambda} 
		+ 96\times \coset{\s{\beta_5}\cdot\lambda} 
		+ 72\times \coset{\s{\beta_3} \s{\beta_5}\cdot\lambda} 
		+ 48\times \coset{\s{\beta_2} \s{\beta_3} \s{\beta_5}\cdot\lambda} \\
		+ 48\times \coset{\s{\beta_4} \s{\beta_3} \s{\beta_5}\cdot\lambda} 
		+ 32\times \coset{\s{\beta_4} \s{\beta_2} \s{\beta_3} \s{\beta_5}\cdot\lambda} 
		+ 24\times \coset{\s{\beta_1} \s{\beta_2} \s{\beta_3} \s{\beta_5}\cdot\lambda} \\
		+ 16\times \coset{\s{\beta_3} \s{\beta_2} \s{\beta_4} \s{\beta_3} \s{\beta_5}\cdot\lambda} 
		+ 16\times \coset{\s{\beta_1} \s{\beta_2} \s{\beta_4} \s{\beta_3} \s{\beta_5}\cdot\lambda} 
		+ 8\times \coset{\s{\beta_3} \s{\beta_1} \s{\beta_2} \s{\beta_4} \s{\beta_3} \s{\beta_5} \cdot\lambda}
		\leq \coset{\jac{L}{T}{\tau}} .
	\end{array}
	\]
	and
	\[
	\begin{array}{c}
		\piece{\lambda\leq \jac{L}{T}{\tau}, \\
			\gen{\lambda,\check{\beta_3}} = 1,\\ 
			\gen{\lambda,\check{\beta_k}} = 0, \quad k=1,2,4,5} \\
		\Longrightarrow 
		24\times\coset{\lambda} 
		+ 16\times \coset{\s{\beta_3}\cdot\lambda} 
		+ 8\times \coset{\s{\beta_2} \s{\beta_3}\cdot\lambda} 
		+ 8\times \coset{\s{\beta_4} \s{\beta_3}\cdot\lambda} \\
		+ 8\times \coset{\s{\beta_5} \s{\beta_3}\cdot\lambda} 
		+ 4\times \coset{\s{\beta_4} \s{\beta_5} \s{\beta_3}\cdot\lambda} 
		+ 4\times \coset{\s{\beta_2} \s{\beta_5} \s{\beta_3}\cdot\lambda} \\
		+ 4\times \coset{\s{\beta_2} \s{\beta_4} \s{\beta_3}\cdot\lambda} 
		+ 2\times \coset{\s{\beta_4} \s{\beta_2} \s{\beta_5} \s{\beta_3}\cdot\lambda} 
		+ 2\times \coset{\s{\beta_3} \s{\beta_4} \s{\beta_2} \s{\beta_5} \s{\beta_3}\cdot\lambda}
		\leq \coset{\jac{L}{T}{\tau}} .
	\end{array}
	\]

	\item For $L$ of type $D_6$, we use one branching rule.
	For this rule, however, the list of exponents contains 30 different exponents of various multiplicities.
	Thus, we give a rough explanation on how to determine the Jacquet functor of $\tau$ in the relevant case, see \cite[Lemma A.6]{SDPS_E7} for more details.
	First, we consider the degenerate principal series representation $\sigma$ of $L$ corresponding to the notation $\coset{D_6,3,0,1}$.
	This is a direct sum of two irreducible representations, one spherical and the other is not.
	We consider the spherical irreducible constituent $\tau$, this is the unique irreducible representation of $L$ such that
	\[
	\piece{\lambda\leq r_T^L\tau \\
	\gen{\lambda,\check{\beta_1}}=\gen{\lambda,\check{\beta_4}} = -1 \\
	\gen{\lambda,\check{\beta_k}} = 0,\quad k=2,3,5,6 .} 
	\]
	Let $\tau'$ denote the non-spherical constituent of $\sigma$.
	Using the method described in \cite[Appendix B]{SDPS_E7}, one can show that
	\[
	\begin{array}{l}
		\dim_\C \bk{r_T^L\tau}=155 \\
		\dim_\C \bk{r_T^L\tau'}=5 \\
		\dim_\C \bk{r_T^L\sigma}=160 .
	\end{array}
	\]
	The initial exponent of $\sigma$ is given by
	\[
	\lambda_0 = \dsixcharchar{-1}{-1}{3}{-1}{-1}{-1} .
	\]
	Applying \Cref{Eq:A1_rule}, together with the above, yields
	\[
	\coset{r_T^L\tau'} 
	=\coset{\lambda_0} 
	+\coset{\s{\beta_3}\cdot \lambda_0}
	+\coset{\s{\beta_2}\s{\beta_3}\cdot \lambda_0}
	+\coset{\s{\beta_4}\s{\beta_3}\cdot \lambda_0}
	+\coset{\s{\beta_2}\s{\beta_4}\s{\beta_3}\cdot \lambda_0}.
	\]
	One then computes $\coset{r_T^L\sigma}$ using \Cref{Eq:gemoetric_lemma} and the structure of $\coset{r_T^L\tau}$ follows by reducing the multiplicity of these $5$ exponents by $1$.


\end{itemize}

\subsection{$L$ of type $E_n$}


In the branching rules for $L$ of type $E_n$, the list of exponents is long.
Thus, we only list the irreducible degenerate principal series of $L$ from which we derive the branching rules used by us.
One then needs to compute $\coset{r_T^L\tau}$ using \Cref{Eq:gemoetric_lemma} in order to determine the branching rule explicitly.

We fix the following labeling of the Dynkin diagram of group of type $E_6$. 

\[\begin{tikzpicture}[scale=0.5]
	\draw (-1,0) node[anchor=east]{};
	\draw (0 cm,0) -- (8 cm,0);
	\draw (4 cm, 0 cm) -- +(0,2 cm);
	\draw[fill=black] (0 cm, 0 cm) circle (.25cm) node[below=4pt]{$\alpha_1$};
	\draw[fill=black] (2 cm, 0 cm) circle (.25cm) node[below=4pt]{$\alpha_3$};
	\draw[fill=black] (4 cm, 0 cm) circle (.25cm) node[below=4pt]{$\alpha_4$};
	\draw[fill=black] (6 cm, 0 cm) circle (.25cm) node[below=4pt]{$\alpha_5$};
	\draw[fill=black] (8 cm, 0 cm) circle (.25cm) node[below=4pt]{$\alpha_6$};
	\draw[fill=black] (4 cm, 2 cm) circle (.25cm) node[right=3pt]{$\alpha_2$};
\end{tikzpicture}\]

The irreducible degenerate principal series of $L$ of type $E_6$ whose Jacquet functor $\coset{r_T^L\tau}$ contribute additional information to the one given previously are: $\coset{E_6,5,-\frac12,1}$ and $\coset{E_6,3,-\frac12,1}$.

We fix the following labeling of the Dynkin diagram of group of type $E_7$. 

\[\begin{tikzpicture}[scale=0.5]
	\draw (-1,0) node[anchor=east]{};
	\draw (0 cm,0) -- (10 cm,0);
	\draw (4 cm, 0 cm) -- +(0,2 cm);
	\draw[fill=black] (0 cm, 0 cm) circle (.25cm) node[below=4pt]{$\alpha_1$};
	\draw[fill=black] (2 cm, 0 cm) circle (.25cm) node[below=4pt]{$\alpha_3$};
	\draw[fill=black] (4 cm, 0 cm) circle (.25cm) node[below=4pt]{$\alpha_4$};
	\draw[fill=black] (6 cm, 0 cm) circle (.25cm) node[below=4pt]{$\alpha_5$};
	\draw[fill=black] (8 cm, 0 cm) circle (.25cm) node[below=4pt]{$\alpha_6$};
	\draw[fill=black] (10 cm, 0 cm) circle (.25cm) node[below=4pt]{$\alpha_7$};
	\draw[fill=black] (4 cm, 2 cm) circle (.25cm) node[right=3pt]{$\alpha_2$};
\end{tikzpicture}\]

The irreducible degenerate principal series of $L$ whose Jacquet functor $\coset{r_T^L\tau}$ contribute additional information to the one given previously are:
$\coset{E_7,4,0,1}$, $\coset{E_7,5,0,1}$ and $\coset{E_7,5,-\frac32,1}$.

\bibliographystyle{alpha}
\bibliography{bib}

\end{document}